\documentclass[reqno]{amsart}

\usepackage{amsmath}
\usepackage{amsfonts}
\usepackage{amsthm}
\usepackage{amssymb}
\usepackage{graphicx}
\usepackage[rightcaption]{sidecap}
\usepackage{bm}
\usepackage{dsfont}
\usepackage{color}
\usepackage{hyperref}
\usepackage{enumerate}
\usepackage{comment}
\usepackage[font=footnotesize]{caption}
\usepackage[all]{xy}
\usepackage{xy}
\usepackage{tikz}
\usepackage{paralist}
\allowdisplaybreaks
\usepackage{pgfplots}
\pgfplotsset{%
   every tick label/.append style = {font=\tiny},
   every axis label/.append style = {font=\scriptsize}
}

\usepackage{a4wide}

\numberwithin{equation}{section}

\newtheorem{thm}{Theorem}[section]
\newtheorem{mainthm}{Theorem}
\newtheorem{maincor}[mainthm]{Corollary}

\newtheorem*{conj*}{Conjecture}

\newtheorem{lem}[thm]{Lemma}

\theoremstyle{definition}

\newtheorem{dfn}[thm]{Definition}
\newtheorem{rmk}[thm]{Remark}

\begin{document}

\title[Periodic magnetic geodesics with low energy]{Periodic Magnetic Geodesics with Every Low Energy:\\ Existence and Localization}

\author[V.~Assenza]{Valerio Assenza}
\address{IMPA, Estrada Dona Castorina 110, Rio de Janeiro, 22460-320, Brazil}
\email{valerio.assenza@impa.br}
\author[G. Benedetti]{Gabriele Benedetti}
\address{SISSA, Mathematics Area, Via Bonomea 265, Trieste 34136, Italy}
\email{gbenedet@sissa.it}
\author[L.~Macarini]{Leonardo Macarini}
\address{IMPA, Estrada Dona Castorina 110, Rio de Janeiro, 22460-320, Brazil}
\email{leonardo@impa.br}

\date{\today}
\subjclass[2020]{37J46 (Primary) 70G75, 70H12, 58E10 (Secondary).}
\keywords{Hamiltonian mechanics, Lagrangian action functional, Calculus of variations, Magnetic geodesics, Periodic orbits}

\begin{abstract}
Let $(Q,g)$ be a Riemannian manifold equipped with a non-identically zero magnetic field represented by a closed $2$-form $\beta$. We allow $Q$ to be non-compact and do not assume that the metric $g$ is complete. We prove that if the magnetic strength, defined as the pointwise norm of $\beta$, attains a strict local maximum on a non-empty compact set $K$, then every sufficiently low energy level carries a contractible periodic magnetic geodesic of the pair $(g,\beta)$ localized near $K$. More precisely, such orbits exist in every neighborhood of $K$, and their lengths converge to zero with the energy. In particular, if $Q$ is compact, then every sufficiently small energy level carries a contractible periodic magnetic geodesic. We also show that, in general, neither the non-emptiness nor the compactness of $K$ can be omitted.

Our proof relies on the calculus of variations of the Lagrangian action functional, and uses several new ideas. More precisely, we overcome: (i) the non-exactness of $\beta$ by restricting the minimax to the set of short loops; (ii) the non-completeness of $g$ by combining a compactification of $Q$ with Thom's Jet Transversality and a blow-up for sequences of magnetic geodesics with energy tending to zero; (iii) the possible non-compactness of Palais--Smale sequences by the positivity of the Ricci magnetic curvature for low energy established by the first-named author. Unlike previous work, Struwe's monotonicity argument cannot be used for our purposes, and we rely on a two-Lyapunov-function argument due to Abbondandolo and Majer.
\end{abstract}

\maketitle

\tableofcontents

\section{Magnetic Geodesics in Classical Mechanics}\label{s:one}

This work studies the existence of periodic orbits of a charged particle in a stationary magnetic field. We find such periodic orbits for low energy near compact regions where the strength of the magnetic field has a strict local maximum, but we do not impose any compactness or geometric condition on the whole configuration space. If the configuration space is compact and the magnetic field is not identically zero, we deduce the existence of such orbits near the global maximum set of the strength.

We work here in the realm of classical mechanics. We refer the reader interested in relativistic or semiclassical mechanics to \cite{Caponio,Morin} and references therein. We start by describing the setup. 

\subsection{The Setting}Let $Q$ be a smooth finite-dimensional manifold (possibly non-compact), and let $g$ be a smooth Riemannian metric on $Q$ (possibly non-complete). Let $\beta$ be a smooth, closed two-form on $Q$, which we refer to as the magnetic form. Let $B$ be the Lorentz force endomorphism representing the bilinear form $\beta$ with respect to the metric $g$, that is,
\begin{equation}\label{e:endo}
\beta_q(u,v)=g_q(B_qu,v),\qquad \forall\,q\in Q,\ \forall\,u,v\in T_qQ.
\end{equation}
A magnetic geodesic $q\colon I\to Q$, defined on an interval $I$, is a non-constant solution of the second-order differential equation
\begin{equation}\label{e:mag}
D_t\dot q=B_q\dot q,
\end{equation}
where $\dot q$ denotes the tangent vector to $q$ and $D_t$ denotes the Levi-Civita covariant derivative along $q$. The name ``magnetic geodesic" comes from the fact that, when $Q$ has dimension two or three, these curves describe the motion of a charged particle subject to the law of inertia given by $g$ and the magnetic field given by $\beta$, see \cite{ARNOLD1,Kozlov,Burns}. Magnetic geodesics also arise when describing the dynamics of systems with rotational symmetry, such as spinning tops \cite{Abraham,Har,Novikov82} or the three-body problem \cite{Montgomery}, after quotienting out the symmetry. 

Equation \eqref{e:mag} admits a local Lagrangian formulation. More precisely, in every open set of $U\subset Q$ where $\beta=d\alpha$ is exact, the magnetic geodesics are the solutions of the Euler--Lagrange equation of the Lagrangian
\begin{equation}
L_\alpha(q,v)=\tfrac{1}{2}|v|^2_q-\alpha_q[v].
\end{equation}
Here we denoted by $|\cdot|_q$ the norm associated with $g_q$ for every $q\in Q$. By Legendre duality, we get a local Hamiltonian formulation determined by the Hamiltonian function 
\begin{equation}
H_\alpha(q,p)=\tfrac12|p+\alpha_q|^2_q
\end{equation}
and the standard symplectic form $\omega=\mathrm{d} p\wedge \mathrm{d}q$. By shifting the momenta by $\alpha$, the local Hamiltonian formulation can be globally described by the Hamiltonian function $H$ with respect to the symplectic form $\omega_\beta$, where
\[
H(q,p)=\tfrac12|p|_q^2
\qquad\mathrm{and}\qquad
\omega_\beta=\mathrm{d}p\wedge \mathrm{d}q-\pi^*\beta,
\]
see \cite{Souriau,Sternberg,Givental}. The Hamiltonian nature of the system, or simply the antisymmetry of the Lorentz force $B$, implies that the kinetic energy
\[
E(q,v)=\tfrac12|v|_q^2
\]
is constant along magnetic geodesics. We are therefore naturally led to study magnetic geodesics on a prescribed energy level $E=e=\tfrac12m^2$, where the positive parameter $m$ represents the speed of the curve.

Observe that, in the absence of a magnetic force, namely when $\beta=0$, equation~\eqref{e:mag} becomes homogeneous and reduces to the standard geodesic equation. The literature on this subject is extensive; we refer the reader, for instance, to \cite{GeodesicFlows_Paternain,Lecturesonclosedgeodesics}. Here, by contrast, we are interested in genuinely magnetic dynamics where the magnetic form $\beta$ does not vanish identically.

\subsection{High and Low Speed}The lack of homogeneity of equation~\eqref{e:mag} under rescaling implies that the magnetic dynamics may change drastically as the speed $m$ varies. Indeed, the Riemannian term on the left-hand side of~\eqref{e:mag} scales quadratically with $m$, whereas the magnetic term on the right-hand side scales linearly. This gives rise to a fundamental distinction between the low- and high-energy regimes: at high speed, the effect of the magnetic field becomes comparatively weak, whereas at low speed it strongly deflects the trajectories. Therefore, one expects a transition, as the speed decreases, from essentially free geodesic motion \cite{Siburg} to motion confined in the directions transverse to the kernel of the Lorentz force. We refer to  \cite{Arnold63,Braun,Arnold97,Castilho,San0,San1,AB2022_normalform} and references therein for extensive literature on the confinement problem for magnetic geodesics. 

When $Q$ is an oriented surface, magnetic dynamics and its dependence on the speed have a geometric description that has been popularized by Arnold since the 1960s \cite{ARNOLD1,Anosov}. In this case, \eqref{e:mag} is equivalent to the prescribed geodesic curvature equation
\begin{equation}
\kappa_q=\frac{b(q)}{m},
\end{equation}
where $\kappa_q$ is the geodesic curvature of $q$ with respect to $g$, and $b\colon Q\to\mathbb R$ is the function defined through
\begin{equation}\label{e:strength}
\beta=b\mu_g,
\end{equation}
where $\mu_g$ is the area form of $g$. In particular, if $Q$ is closed, $g$ is a hyperbolic metric, and $b=1$, then by \cite{Ginzburg96b}, we distinguish three cases:
\begin{enumerate}
    \item For $m>1$, the magnetic geodesic flow at speed $m$ is conjugated to the geodesic flow of $g$, up to a constant time reparameterization. In particular, there is exactly one periodic magnetic geodesic of speed $m$ in every non-trivial free homotopy class.
    \item For $m=1$, the magnetic geodesic flow at speed $m$ is the horocycle flow. Hence, the flow is minimal and every orbit is dense in $Q$. 
    \item For $0<m<1$, the magnetic geodesic flow at speed $m$ is a free circle action. In particular, all magnetic geodesics at speed $m$ are periodic and contractible.
\end{enumerate} 
\subsection{Periodic Magnetic Geodesics}
According to this example, one way to interpret the dichotomy between the high- and low-energy regimes is through the problem of finding periodic magnetic geodesics, also known as \textit{closed magnetic geodesics} in the literature, with prescribed speed $m$ \cite{Ginzburg96b}, a line of research that has been brought forward by Novikov in the 1980s \cite{Novikov82}. Intuitively, as $m$ decreases, there is a transition from topologically nontrivial periodic orbits, which arise naturally in standard geodesic flows when $Q$ is closed and not simply connected, to short periodic contractible orbits in the low-speed regime.

Correspondingly, the Lagrangian and Hamiltonian approach to the study of standard periodic geodesics generalize to high-energy magnetic geodesics. We refer, for example, to \cite{CIPP2000,Hofer-Viterbo88,Abbondandolo1} for the case of exact magnetic fields on closed manifolds, and we also point at \cite{Koh} for a heat-flow approach. For general magnetic fields, there exists a number $m(g,\beta)\in[0,\infty]$ called \textit{Mañé critical value} such that if $Q$ is compact, then there is a periodic magnetic geodesic for every speed $m>m(g,\beta)$ \cite{Osuna2005,Merry2010}. If $Q$ is not simply connected, such an orbit can be found in every prescribed nontrivial free-homotopy class of loops in $Q$, and carries geometric information in the Anosov case on surfaces \cite{ADMT2026}.

We give a precise definition of $m(g,\beta)$ in \eqref{e:mane}, but for now it is enough to mention that $m(g,\beta)=0$ if and only if $\beta=0$ and that $m(g,\beta)$ is finite if and only if the lift of $\beta$ to the universal cover admits a bounded primitive 1-form. In the hyperbolic surface example given above, we have $m(g,\beta)=1$. For the relevance of the Mañé critical value in symplectic geometry and Lagrangian mechanics, we refer to \cite{Mane0,Mane1,PP,CIPP1998,CIPP2000,Pat06,Pat09,CFP2010,Sorrentino}.

\section{Periodic Magnetic Geodesics with Low Energy: State of the Art}
In this work, instead, we study the existence of contractible periodic magnetic geodesics with speed $m<m(g,\beta)$, focusing on values of $m$ close to zero. In the subcritical regime, the study is considerably more subtle compared to the supercritical regime. Indeed, for $m>m(g,\beta)$ magnetic geodesics are reparametrizations of standard geodesics for a globally defined Finsler metric of Randers-type (upon, possibly, lifting the system to the universal cover of $Q$), while for $m<m(g,\beta)$, the Finsler interpretation is valid only locally \cite{CIPP1998}. 

We present our results in Section \ref{s:results}. To put them into context, in this section we recall the state of the art when $Q$ is compact.

\subsection{The Case of Surfaces}When $Q$ is a compact surface, the first results about periodic magnetic geodesics were obtained in the 1980s. Using the Hamiltonian formalism, Arnold \cite{Arnold86} and Ginzburg \cite{Ginzburg87} showed that if $\beta$ is nowhere vanishing, then there are short contractible magnetic geodesics for every $m$ small enough (more precisely, at least $2\,\mathrm{genus}(Q)+2$ of them under non-degeneracy assumptions and $\mathrm{cuplength}(Q)+1$ in general). Using the Lagrangian formalism, Novikov \cite{Novikov82} and Taimanov \cite{Taimanov92A,Taimanov92B} established the existence of periodic, embedded magnetic geodesics with low speed $m$ for oscillating magnetic fields, a class that includes exact magnetic fields. These periodic magnetic geodesics might not be contractible \cite{Tai15}, but they collectively bound an open region of $Q$. Contreras, Macarini, and G.~Paternain \cite{CMP} subsequently refined this argument showing the existence of this collection of periodic magnetic geodesics for every $m<m(g,\beta)$ when the magnetic form is exact, see also \cite{AM2019}. 

Several other authors established the existence of periodic magnetic geodesics at low speeds on surfaces providing additional information (such as embeddedness and lower bounds on the number of orbits), using a wide range of techniques. The papers \cite{Ketover,ChengZhou1,SarnataroStryker2026} use ideas from minimal surfaces. The papers \cite{Schneider1,ResenbergSchneider} use degree theory for immersed curves. The papers \cite{Ben16,GGM} use Floer theory. The papers \cite{AMP,AMMP,AB2015,AB2017,AABMT} use the Lagrangian action functional. The papers \cite{Castilho,AB2022} use the Birkhoff normal form. Finally, the existence of so-called Zoll magnetic systems where every magnetic geodesic of a given speed is periodic is discussed in \cite{AL,AB2022_normalform,ABB2024,BP} using different approaches such as the Nash--Moser implicit function theorem and twistors.

\subsection{Higher Dimension}
For compact manifolds $Q$ of any dimension, the existence of contractible periodic magnetic geodesics with low speed is a challenging problem, since the usual compactness arguments that hold for $m>m(g,\beta)$ break down. We refer here to the contact-type condition in the Hamiltonian case, and to the Palais--Smale property, which we discuss in more detail below, in the Lagrangian case \cite{CIPP2000}. These difficulties have stimulated the development of new ideas.

In \cite{Schlenk} Schlenk used the Hamiltonian approach and Floer theory to show that if $\beta$ is non-identically zero, then contractible periodic magnetic geodesics exist for almost every small $m$ (see also \cite{Polterovich,Macarini2004,FS} for earlier partial results in this direction). More precisely, this means that there exists $m'(g,\beta)$ in $(0,m(g,\beta)]$ and a subset of $(0,m'(g,\beta))$ with full Lebesgue measure such that for all $m$ in this subset there exists a contractible closed magnetic geodesic with speed $m$. 

Using Lagrangian variational methods, Contreras showed that one can take $m'(g,\beta)=m(g,\beta)$ when the magnetic field is exact \cite{Contreras2006,Tai2010,Abbondandolo1}, a restriction that was later dropped in \cite{Osuna2005, Merry2010,AsselleBenedetti2016,Groman}. The existence for almost every speed in a certain interval in the Hamiltonian and Lagrangian setting is an incarnation of a celebrated argument due to Struwe, which applies to variational settings that depend monotonically on a real parameter (in this case the speed $m$) \cite{Struwe88,Struwe90,Jean}.

Whether every subcritical value of $m$ carries a closed magnetic geodesic, either contractible or noncontractible, remains an open problem, but partial progress has been made for sufficiently low speed under additional assumptions on $\beta$. Using Floer theory, Ginzburg and G\"urel \cite{GG2004,GG2009} and Usher \cite{Usher} proved the existence of a contractible periodic magnetic geodesic for every sufficiently small $m$ under the assumption that $\beta$ is symplectic. We refer to \cite{Kerman1,KermanGinzburg} and \cite{CGK2004} for earlier results in this direction and to \cite{BBS} for a normal form in this setting.

More recently, the first-named author used the Lagrangian action functional to prove the existence of a contractible periodic magnetic geodesic for every sufficiently small $m$ when $\beta$ is merely nowhere vanishing \cite{Assenza2024}, a type of closed two-forms that exists on every manifold $Q$ of dimension at least $3$ by Thom Jet Transversality \cite[Chapter 3, Theorem 2.4]{Hirsch1976}. Both Hamiltonian and Lagrangian results fix the lack of compactness by showing that periodic orbits of given Conley--Zehnder index, respectively, Morse index have bounded period.

In the present paper, building on \cite{Assenza2024}, we will drop the nowhere-vanishing condition on $\beta$ and investigate the case of non-compact manifolds $Q$ as well. Let us present our results.

\section{Main Results of the Paper}\label{s:results}
The main result of this work establishes the existence of contractible periodic magnetic geodesics localized near a non-empty, compact local maximum of the magnetic strength for every low speed, see Definition \ref{d:strict}. Our argument allows $Q$ to be non-compact, a case where very little is known, see \cite{Zuddas0,Gong2023,Caldiroli2024,CoraMusina} for partial results. However, contrary to previous work, we do not need $g$ to be a complete metric. Our results are also new when $Q$ is compact, since they yield the existence of a contractible periodic magnetic geodesic for every low speed $m$ and for every magnetic form $\beta$ that does not vanish identically.

\subsection{Magnetic Strength and Strict Local Maxima}To precisely state the results, we introduce the following notions. Here and in the following, we will drop the subscript indicating the base point of the tangent space where a norm is taken. For instance, we will write $|v|:=|v|_q$ for all $q\in Q$ and $v\in T_qQ$.

\begin{dfn}
Let $g$ be a Riemannian metric and $\beta$ a magnetic form on $Q$ with associated Lorentz force $B$. We define the magnetic strength  at $q\in Q$ by
\begin{equation}
\label{eq:strength}
|\beta_q|:=|B_q|,
\end{equation}
where $|B_q|$ denotes the operator norm
\begin{equation}
|B_q|:=\sup_{|v|=1}|B_qv|.
\end{equation}
We denote by
\begin{equation}
\Vert\beta\Vert_\infty:=\sup_{q\in Q}|\beta_q|
\end{equation}
the supremum of the magnetic strength.
\end{dfn}

\begin{dfn}\label{d:strict}
We say that the magnetic strength attains a strict local maximum on $K\subset Q$ if there exists $b_K>0$ such that
\begin{itemize}
    \item $|\beta_q|=b_K$ for every $q\in K$;
    \item there exists an open neighborhood $V$ of $K$ such that $|\beta_q|<b_K$ for every $q\in V\setminus K$.
\end{itemize}
\end{dfn}
\subsection{The Statements}We are ready to state the first main theorem. 
\begin{mainthm}\label{t:A}
Let $(Q,g)$ be a Riemannian manifold and let $\beta$ be a magnetic field on $Q$ whose magnetic strength attains a strict local maximum $b_K>0$ on a nonempty compact set $K\subset Q$. Then, for every real number $\varepsilon>0$ and every neighborhood $V$ of $K$, there exists $m_{\varepsilon,V}>0$ such that, for every $m\in(0,m_{\varepsilon,V})$, there is a contractible periodic magnetic geodesic of speed $m$ and length at most $\frac{2\pi m}{b_K}(1+\varepsilon)$ contained in $V$.
\end{mainthm}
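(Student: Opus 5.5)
We plan a variational proof via the free-period Lagrangian action functional, restricted to short loops in a small neighborhood of $K$. First we localize. Shrinking $V$, we may assume $\overline V$ is compact, $\beta=d\alpha$ is exact on $V$ (for instance, $V$ is a finite union of small geodesic balls whose nerve makes $\beta$ exact on loops of length below the injectivity scale), and $|\beta_q|<b_K$ on $\overline V\setminus K$. Completeness and compactness are irrelevant after this step. Indeed, we will modify $(g,\beta)$ outside a slightly smaller neighborhood $W\Subset V$ of $K$, e.g.\ by embedding $\overline V$ into a closed manifold and extending $g$ and $\beta$ there while keeping $|\beta|<b_K$ off $W$. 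Then we show a posteriori that the orbits we find lie in $W$.

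On the space of $H^1$ loops $\gamma\colon\mathbb R/T\mathbb Z\to Q$ of length at most $\delta$, we use the action
\[
S_m(\gamma,T)=\int_0^T\Big(\tfrac12|\dot\gamma|^2+\tfrac12 m^2\Big)dt-\int_{D_\gamma}\beta ,
\]
where $D_\gamma$ is the small capping disc, which is well defined for short loops. Critical points with $T>0$ are periodic magnetic geodesics of speed $m$, and they are contractible.

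Next we build a mountain-pass geometry at scale $m$. Near constant loops, the isoperimetric inequality gives $S_m>0$ on a thin shell of loops of length about $cm$. Near a point $q_0\in K$ where $|\beta|=b_K$, we take a small circle of radius $r=m/b_K$ in the $2$-plane of maximal Lorentz rotation, traversed at speed $m$. Its action is about $\pi m^2/b_K+\pi m^2/b_K-b_K\pi r^2=\pi m^2/b_K-\dots$. After adjusting the period, the action becomes negative. This gives a mountain pass between constants and such a loop, with minimax value $c_m\approx \pi m^2/b_K$ (rescaled, $c_m/m^2\to \pi/b_K$). The main comparison is rescaled: put $\gamma(t)=\exp_{q}(m\,x(t/m))$. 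As $m\to0$, the functional converges to the flat constant-field problem in $T_qQ$ with field $B_q$, whose minimax value is $\pi/|B_q|$. Since $|B_q|<b_K$ outside $K$, loops concentrating away from $K$ have strictly larger minimax level, which forces localization near $K$. Moreover, the length bound $2\pi m(1+\varepsilon)/b_K$ follows from the energy identity $S_m=mL-\text{flux}$ at the critical level, together with the planar isoperimetric inequality.

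The main obstacle is the existence of a critical point at the minimax level for every $m$, not merely almost every $m$: Palais--Smale sequences may have periods $T\to\infty$ or $T\to0$. We plan to rule out $T\to0$ by the shell estimate. For $T\to\infty$, we would use the positivity of the Ricci magnetic curvature for low energy, as in \cite{Assenza2024}, which is available near $K$ since $\beta\neq0$ there. This gives a uniform period bound for critical points of Morse index at most one in the short-loop class. To get such an index bound without Struwe monotonicity, we use the Abbondandolo--Majer two-Lyapunov-function deformation argument. The action is one Lyapunov function, and the length (or the distance from $K$) is the second, so that the deformation preserves the short-loop, localized sublevel. Finally, we prove localization by contradiction with a blow-up: take magnetic geodesics with $m_k\to0$ that leave $W$, rescale by $m_k$, and pass to a limit circle of radius $1/|B_{q_\infty}|$ in a flat model. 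The limit's action must equal $\lim c_{m_k}/m_k^2=\pi/b_K$, which forces $|B_{q_\infty}|=b_K$ and hence $q_\infty\in K$, a contradiction. Thom jet transversality is used only to perturb the extension so that the blow-up limits are nondegenerate.
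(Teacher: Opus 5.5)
Your overall architecture (compactify, run a short-loop minimax with two Lyapunov functions, localize a posteriori by blow-up) is the paper's. The step that fails is your claim that positivity of the magnetic Ricci curvature is ``available near $K$ since $\beta\neq0$ there.'' The period--index estimate from \cite{Assenza2024} (Lemma~\ref{l:index}) needs a positive lower bound on the magnetic Ricci curvature, uniform at low speed, \emph{wherever} the obstructing critical points and Palais--Smale sequences of the minimax may lie. Nothing confines them near $K$. The minimax class consists of paths anywhere in the manifold. The distance from $K$ is not a Lyapunov function for any pseudogradient of the action, so you cannot restrict the class to loops near $K$. Localization is obtained only a posteriori, from the action bound and the blow-up. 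Your closed extension is only required to satisfy $|\beta|<b_K$ off $W$, so the form may have zeros, and there the curvature bound and the compactness argument break down.

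The missing idea is to make the extension \emph{nowhere vanishing}, i.e.\ to reduce to Theorem~\ref{t:C}. The paper does this in three steps.
\begin{enumerate}[(i)]
\item It writes $\beta=\pi^*\eta+\mathrm{d}\alpha$ on a collar of $\partial W$ and cuts off $\alpha$, so the form descends to the double $\widehat Q$ of $W$. A closed extension to an arbitrary closed manifold containing $\overline V$ need not exist.
\item It removes all zeros by Thom jet transversality relative to a neighborhood of $K$. This works because $\binom{n}{2}>n$ for $n\ge4$.
\item It rescales the metric conformally so that $K$ becomes the global maximum set of $|\hat\beta|$.
\end{enumerate}
Dimension $3$ is reduced to dimension $4$ via $S^1\times Q$ with the product metric and pulled-back form; contractible orbits have no $S^1$-velocity, so speed and length are preserved. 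Surfaces are covered by \cite{AB2022}. You use Thom transversality for a different purpose, and you have no mechanism for $n\le3$, where generic closed $2$-forms do vanish. Also, $\beta$ need not be exact on any neighborhood of $K$ (e.g.\ $K$ an embedded $2$-sphere with $\int_K\beta\neq0$); only single-valuedness of the action on short loops is needed.

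Your deformation scheme also needs changing. The paper's second Lyapunov function is not the length but the action at a second speed, $\mathcal S_{2m}$. On $\{0\le\mathcal S_m\le\mathcal S_{2m}<Dm^2\}$ the identity $\mathcal S_{2m}-\mathcal S_m=m\mathcal E$ gives $\mathcal E\le Dm$. This yields bounded periods and hence the Palais--Smale property for $\mathcal S_m$ (Lemma~\ref{l:PS}). The points where the two gradients are antiparallel are critical points of $\mathcal S_\mu$ with $\mu\in[m,2m]$. Those with $\mathcal E\ge2T_0m$ have index at least $3$; this is where global nonvanishing is used. They are bypassed by energy-preserving slice deformations defined off codimension-$2$ submanifolds, combined with transversality for $1$-dimensional paths.

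With the length as second function, the antiparallel points are critical points of $\mathcal S_\mu$ for all $\mu>m$. That range grows with the constant defining your sublevel, so the criterion $\mathcal E\ge T_0\mu$ of Lemma~\ref{l:index} is no longer guaranteed there. Note also that the scheme never produces an index-$\le1$ critical point; the index enters only as a lower bound on the obstructions.

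Finally, the length bound does not follow from $S_m=mL-\text{flux}$ and the isoperimetric inequality. Those only give $mL-\frac{b_K}{4\pi}L^2\le\frac{\pi m^2}{b_K}(1+\varepsilon)$, which holds for every $L$. The bound comes from the blow-up: the limit periodic solution of $\dot v=B_{q_*}v$ has action $eT$ and period $T\ge2\pi/|\beta_{q_*}|$, whence $\frac12m^2T\le\frac{\pi m^2}{b_K}(1+\varepsilon)+o(m^2)$.
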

By continuity of the magnetic strength, if $Q$ is compact and $\beta$ is not identically zero, then its global maximum set is always non-empty and satisfies Definition~\ref{d:strict}. Therefore, Theorem~\ref{t:A} implies the following result.
\begin{maincor}\label{c:B}
Let $(Q,g)$ be a closed Riemannian manifold and $\beta$ a magnetic field on $Q$ such that $\beta$ is not identically zero. Then, for every real number $\varepsilon>0$ and every neighborhood $V$ of the set $\{q\in Q\ |\ |\beta_q|=\Vert\beta\Vert_\infty\}$ in $Q$, there exists $m_{\varepsilon,V}>0$ such that, for every $m\in(0,m_{\varepsilon,V})$, there is a contractible periodic magnetic geodesic of speed $m$ and length at most $\frac{2\pi m}{\Vert\beta\Vert_\infty}(1+\varepsilon)$ contained in $V$.
\end{maincor}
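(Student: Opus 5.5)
The corollary follows from Theorem~\ref{t:A} once we check that $K:=\{q\in Q\ |\ |\beta_q|=\Vert\beta\Vert_\infty\}$ satisfies the hypotheses there with $b_K=\Vert\beta\Vert_\infty$. The plan has three steps.

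\textbf{Step 1: continuity of the strength.} The map $q\mapsto|\beta_q|=|B_q|$ is continuous on $Q$. Indeed, $B$ is a smooth section of $\mathrm{End}(TQ)$, determined by \eqref{e:endo} from the smooth tensors $g$ and $\beta$. In a local $g$-orthonormal frame, the operator norm is a continuous function of the matrix entries, which vary continuously with $q$.

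\textbf{Step 2: the set $K$ is non-empty and compact, and $b_K>0$.} Since $Q$ is closed, the continuous function $q\mapsto|\beta_q|$ attains its supremum $\Vert\beta\Vert_\infty$. Hence $K$ is non-empty. It is also closed, being a level set of a continuous function, and therefore compact as a closed subset of a compact space. Since $\beta$ is not identically zero, there is a point $q$ with $\beta_q\neq 0$. Then $B_q\neq0$ by non-degeneracy of $g_q$, so $\Vert\beta\Vert_\infty\ge|\beta_q|>0$.

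\textbf{Step 3: $K$ is a strict local maximum in the sense of Definition~\ref{d:strict}.} Take $b_K=\Vert\beta\Vert_\infty$ and the neighborhood $V=Q$. By construction $|\beta_q|=b_K$ on $K$, and $|\beta_q|<b_K$ for every $q\notin K$, because $b_K$ is the global maximum and $K$ is exactly the set where it is attained.

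\textbf{Conclusion.} Apply Theorem~\ref{t:A} to $K$, with the given $\varepsilon$ and the given neighborhood $V$ of $K$. This produces $m_{\varepsilon,V}>0$ such that for every $m\in(0,m_{\varepsilon,V})$ there is a contractible periodic magnetic geodesic of speed $m$, contained in $V$, of length at most $\frac{2\pi m}{b_K}(1+\varepsilon)=\frac{2\pi m}{\Vert\beta\Vert_\infty}(1+\varepsilon)$.

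None of these steps is a real obstacle; all the difficulty lies in Theorem~\ref{t:A}. The only point that needs care is Step 1, since the operator norm is not smooth where eigenvalues of $-B_q^2$ cross. Continuity, which is all we need, still holds.
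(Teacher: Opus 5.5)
Your proposal is correct and follows the paper's own argument. The paper deduces Corollary~\ref{c:B} from Theorem~\ref{t:A} by noting that, by continuity of $q\mapsto|\beta_q|$, the global maximum set of the magnetic strength on a closed manifold with $\beta\not\equiv0$ is non-empty and satisfies Definition~\ref{d:strict} with $b_K=\Vert\beta\Vert_\infty>0$; you simply spell out the continuity, compactness and positivity checks that the paper leaves implicit.
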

Corollary \ref{c:B} is sharp. Indeed, if $\beta$ is identically zero, then magnetic geodesics are standard geodesics, and there exist metrics (such as flat metrics on tori) with no contractible, non-constant periodic geodesics. Moreover, even if a metric admits contractible, non-constant periodic geodesics, their length is bounded away from zero and therefore it cannot converge to zero as $m$ goes to zero.

When $Q$ is a surface, the localization result of Theorem~\ref{t:A} was obtained in \cite[Lemma 5.2]{AB2022} using a local surface of section, and, when the local compact maximum is a non-degenerate critical point of the magnetic strength, in \cite{EFP2024} by means of the implicit function theorem. To the best of the authors' knowledge, the localization in arbitrary dimension and in this general setting is new.

Before outlining the proof of Theorem~\ref{t:A} and discussing the techniques involved, we further illustrate its generality by showing that the assumptions on the local maximum set cannot, in general, be dropped. Indeed, \cite{KirschLaurain2010} proves that if $Q=\mathbb R^2$ and
\begin{equation}
g=\mathrm{d}x^2+\mathrm{d}y^2,\qquad \beta=b\mathrm{d}x\wedge \mathrm{d}y,\qquad \frac{\partial b}{\partial x}>0,
\end{equation}
then there are no periodic magnetic geodesics at any speed. In this case, $|\beta_q|=|b(q)|$ and therefore the only possible local maximum is the empty set. In the next result, we also give a counterexample having a noncompact local maximum set.
\begin{mainthm}\label{t:B}
There exists a Riemannian metric $g$ on $\mathbb R^2$ such that, if $\beta=\mu_g$ is the area form of $g$, then there are no contractible periodic magnetic geodesics at any sufficiently small speed.
\end{mainthm}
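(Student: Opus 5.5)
We build the metric as a warped product $g=\mathrm{d}x^2+\varphi(x)^2\,\mathrm{d}y^2$ on $\mathbb R^2$, with $\varphi=\exp\bigl(\int_0^x c\bigr)$. The log-derivative $c=\varphi'/\varphi$ is chosen \emph{bounded and strictly monotone}, for instance $c(x)=2+\tanh x$. The guiding idea is that hyperbolic space corresponds to $c$ constant, where small magnetic circles exist. A strictly monotone $c$ produces a uniform drift of small magnetic loops in the $y$-direction, and this drift can be detected exactly through the conserved momentum coming from the $y$-translation symmetry. Every closed curve in $\mathbb R^2$ is contractible, so it suffices to rule out all periodic magnetic geodesics of speed $m<m_0:=1/\sup|c|$.

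\emph{Step 1 (reduction).} Write $\beta=\mu_g=\varphi\,\mathrm{d}x\wedge\mathrm{d}y=\mathrm{d}(A(x)\,\mathrm{d}y)$ with $A'=\varphi$. Since $L_\alpha$ is invariant under $y$-translations, there is a conserved momentum $p=\varphi^2\dot y+A(x)$, up to a sign convention. Set $u:=(p-A(x))/\varphi$. Then, at speed $m$,
\[
\dot y=\frac{u}{\varphi},\qquad \dot x^2=m^2-u^2,\qquad \frac{\mathrm{d}u}{\mathrm{d}x}=-1-uc(x).
\]
The last identity comes from differentiating $u$ and using $A'=\varphi$. Since $|u|\le m<1/\sup|c|$, we get $1+uc>0$, so $u$ is a strictly decreasing function of $x$ along the orbit.

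\emph{Step 2 (structure of a putative periodic orbit).} Suppose $x$ is constant along the orbit. Then the curve is a $y$-line $\{x=x_0\}$, whose geodesic curvature is $|c(x_0)|$. This cannot equal $1/m>\sup|c|$, so it is not a magnetic geodesic. Hence $x$ oscillates between turning points $x_1<x_2$, where $u=\pm m$. By the monotonicity of $u$ in $x$, we have $u(x_1)=m$ and $u(x_2)=-m$. Over one period the orbit travels $x_1\to x_2\to x_1$, and the two halves contribute equally. Changing variables from $t$ to $x$ and then to $u$ gives
\[
\tfrac12\int_0^T\dot y\,\mathrm{d}t=\int_{-m}^{m}\frac{u\,\mathrm{d}u}{G(u)\sqrt{m^2-u^2}},\qquad G(u):=\varphi(x(u))+u\,\varphi'(x(u)),
\]
where $x(u)$ is the inverse of $u(x)$ on $[x_1,x_2]$. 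Periodicity forces this integral to vanish.

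\emph{Step 3 (the key sign computation, which I expect to be the heart of the proof).} Differentiating along the inverse function and using $\mathrm{d}x/\mathrm{d}u=-1/(1+uc)$, we obtain
\[
\frac{\mathrm{d}G}{\mathrm{d}u}=\frac{u\bigl(c\varphi'-\varphi''\bigr)}{1+uc}=-\frac{u\,\varphi\, c'(x(u))}{1+uc}.
\]
Since $c'>0$ and $\varphi>0$, this shows $G$ is strictly decreasing on $(0,m]$ and strictly increasing on $[-m,0)$. The two branches do not directly compare $G(u)$ with $G(-u)$, so I would compare them via the auxiliary function $h(u):=G(u)-G(-u)$. This function is odd, $h(0)=0$, and
\[
h'(u)=G'(u)+G'(-u)=-u\varphi c'\Bigl[\tfrac{1}{1+uc}\big|_{x(u)}-\tfrac{1}{1-uc}\big|_{x(-u)}\Bigr].
\]
The sign of $h'$ is not obvious from this expression, and this is the delicate point. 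My first attempt would be to rewrite $G$ along the orbit as a function of $x$, namely $G=\varphi(1-cu)+\dots$, using $\varphi+u\varphi'=\varphi(1+uc)$, and compare the values at the two preimages via the conserved quantity. Failing that, I would simply \emph{choose} $c$ with $c'>0$ small and nearly constant on scales $\gg m$, so that a Taylor expansion in $m$ gives $h(u)=\kappa\,u\,(\text{positive})+O(m^2)$ with a definite sign, uniformly in the base point. Either way, once $G(u)<G(-u)$ for $u>0$, the integral in Step 2 equals
\[
\int_0^m\frac{u}{\sqrt{m^2-u^2}}\Bigl(\frac1{G(u)}-\frac1{G(-u)}\Bigr)\mathrm{d}u>0,
\]
which contradicts periodicity. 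Hence for $m<m_0$ there are no periodic magnetic geodesics at all, and in particular no contractible ones. The hyperbolic case $c\equiv1$ gives $G'\equiv0$ and a vanishing integral, which is consistent with the existence of magnetic circles there and serves as a sanity check.
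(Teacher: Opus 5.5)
Your Steps 1 and 2 are correct. The reduction via $u=(p-A)/\varphi$, the single oscillation of $x$ between $u=m$ and $u=-m$, and the formula for the $y$-displacement per oscillation all check out; this displacement is, up to sign, the derivative $S'(c)$ of the reduced action that the paper expands. The gap is in Step 3, and it is more than an unfinished computation: the premise that a bounded, strictly monotone $c=\varphi'/\varphi$ forces a drift is false.

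Take $c=\tanh x$. Then $\varphi=\cosh x$, so $g$ is the hyperbolic plane, and with $\beta=\mu_g$ every magnetic geodesic of speed $m<1$ is a closed circle. In your notation $A=\sinh x$, $p=u\cosh x+\sinh x$ and $G=\cosh x+u\sinh x$, hence $G(u)^2=p^2+1-u^2$. So $G$ is even, $h\equiv0$ and your integral vanishes, even though $G'\neq0$ on $(0,m]$. No argument that uses only $c'>0$, or only the monotonicity of $G$ on each half-interval, can produce a sign. Your sanity check $c\equiv1$ does not probe the relevant mechanism.

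The Taylor fallback is also misstated, because there is no linear term. Since $h$ is odd, $h'(0)=2G'(0)=0$. From your formula for $G'$ one gets $G'''(0)=2\varphi(c''+2cc')$ at the centre $x_0$ of the oscillation (where $u=0$). Writing $K=-\varphi''/\varphi=-(c'+c^2)$ for the Gaussian curvature, this gives $h(u)=-\tfrac23\varphi(x_0)K'(x_0)u^3+O(u^5)$. Even for locally linear $c$ with $c'>0$ one gets $h\approx\tfrac43cc'\varphi u^3$, whose sign depends on the sign of $c$, so ``$c'$ small and nearly constant'' does not help.

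The missing idea is that the Gaussian curvature, not $c$, must be strictly monotone. This is exactly the paper's hypothesis $0<K'\le1$, and it matches the paper's expansion $S(c)=\pi m^2-\tfrac\pi4m^4K(x_0(c))+o(m^4)$.

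You would then still need uniformity in $x_0$. A bounded monotone $K$ has $K'\to0$ along sequences going to infinity, so the $O(u^5)$ remainder must be controlled relative to $K'(x_0)u^3$. The paper avoids this by working on a bounded strip $(x_-,x_+)\times\mathbb R$, where $K'$ is bounded below, and pulling back by a diffeomorphism $\psi\colon\mathbb R^2\to(x_-,x_+)\times\mathbb R$; the theorem allows an incomplete metric.

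Your particular choice $c=2+\tanh x$ happens to be salvageable. It gives $K=-5-4\tanh x$, every derivative of $c$ is $O(\cosh^{-2}x)$ and hence comparable to $K'$, and every term of $G^{(5)}$ contains such a derivative. Therefore $h(u)\ge\varphi(x_0)\cosh^{-2}(x_0)\,u^3(\tfrac83-Cu^2)>0$ uniformly, that is, $G(u)>G(-u)$ (either sign suffices). That computation is precisely the content your proposal leaves open.
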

The metric $g$ constructed in Theorem~\ref{t:B} is invariant under vertical translations and, hence, so is its area form $\mu_g$. Thus, the magnetic geodesic flow induced by $g$ and $\beta=\mu_g$ is integrable. To show that this example has the required property, we use the fact that the dynamics drifts in the direction of the Hamiltonian vector field on $\mathbb R^2$ with Hamiltonian function given by the Gaussian curvature $K=K(x)$ of $g$ \cite{Arnold97,AB2022_normalform}. We therefore construct $g$ so that $\frac{\mathrm{d}K}{\mathrm{d}x}>0$. Notice that in this example $Q=\mathbb R^2$ is the only nonempty local maximum set since the magnetic strength is constant equal to $1$. 

\subsection{From Noncompact to Compact Manifolds}
The main ingredient in the proof of Theorem \ref{t:A} is a weaker version of Corollary \ref{c:B} that upgrades the existence result of \cite{Assenza2024} for nowhere vanishing magnetic fields on compact manifolds to a localization result near the global maximum set of the magnetic strength.
\begin{mainthm}\label{t:C}
Let $(Q,g)$ be a closed Riemannian manifold and $\beta$ a magnetic field on $Q$ such that $\beta$ is nowhere zero. Then, for every real number $\varepsilon>0$ and every neighborhood $V$ of the set $\{q\in Q\ |\ |\beta_q|=\Vert\beta\Vert_\infty\}$ in $Q$, there exists $m_{\varepsilon,V}>0$ such that, for every $m\in(0,m_{\varepsilon,V})$, there is a contractible periodic magnetic geodesic of speed $m$ and length at most $\frac{2\pi m}{\Vert\beta\Vert_\infty}(1+\varepsilon)$ contained in $V$.
\end{mainthm}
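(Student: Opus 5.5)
We follow the variational strategy of \cite{Assenza2024}: a minimax of mountain-pass type for the free-period Lagrangian action functional on short loops, with two additions. The first is an a priori estimate that pins down the level of the minimax. The second is a compactness and localization argument that keeps the orbits near the maximum set $K_{\max}:=\{q\in Q\ |\ |\beta_q|=\Vert\beta\Vert_\infty\}$. Write $b:=\Vert\beta\Vert_\infty$.

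\textbf{Step 1: the functional.} On the space of pairs $(x,T)$, with $x\colon\mathbb R/\mathbb Z\to Q$ a loop in $W^{1,2}$ of length below a small $\delta_0$ and $T>0$, consider
\[
S_m(x,T)=\int_0^1\Big(\tfrac{1}{2T}|\dot x|^2+T\tfrac{m^2}{2}\Big)\,\mathrm{d}s-\int_{D_x}\beta .
\]
Here $D_x$ is the small capping disc given by geodesic coning, which is well defined on short loops. Critical points with $T>0$ are contractible periodic magnetic geodesics of speed $m$. Constant loops form a set on which $S_m\to0$ as $T\to0$. Using the isoperimetric inequality and $|\beta_q|\le b$ near short loops, one shows that on the sphere of loops of length $\ell=m\tau$, with $\tau$ comparable to $1/b$, we have $S_m\ge c\,m^2/b>0$.

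\textbf{Step 2: level estimate and localization.} Fix $q_0\in K_{\max}$ and $v\in T_{q_0}Q$ maximizing $|B_{q_0}v|$, and set $P=\mathrm{span}(v,B_{q_0}v)$. Small circles of radius $r$ in $\exp_{q_0}(P)$, travelled with period $T$, have action approximately $\frac{2\pi^2r^2}{T}+\frac{Tm^2}{2}-b\pi r^2$. Optimizing over $T$ gives approximately $2\pi mr-b\pi r^2$, which becomes negative once $r>2m/b$. We then define a mountain-pass class consisting of paths from constant loops (with $T\to0$) to these negative-action circles. Its minimax value $c(m)$ lies between $c\,m^2/b$ and $\pi m^2/b\,(1+o(1))$, the upper bound being attained by the path of circles of growing radius. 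Conversely, any loop whose action is close to this value and sits on the path must have length at most $\frac{2\pi m}{b}(1+\varepsilon)$. We will also prove a localization estimate: loops centred at a point $q$ with $|\beta_q|\le b-\eta$ have action bounded below by the analogous quantity with $b-\eta$ in place of $b$. From this we get a pseudo-gradient deformation that can be taken to preserve the set of loops inside $V$ at levels near $c(m)$. We will therefore carry out the minimax over paths inside $V$.

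\textbf{Step 3: compactness, the main obstacle.} Palais--Smale sequences for $S_m$ could have $T_n\to\infty$. Following \cite{Assenza2024}, Struwe's monotonicity is unavailable for a fixed $m$. Instead we use a Morse-index bound: the mountain-pass produces critical points of Morse index at most $1$. We then apply the positivity of the magnetic Ricci curvature at low energy, which is the point where $\beta$ nowhere zero is used. This positivity forces magnetic geodesics of speed $m$ with period $T\gtrsim 1/b$ to have Morse index at least $2$, so index at most $1$ gives a period bound. To pass from critical points to Palais--Smale sequences we use the Abbondandolo--Majer two-Lyapunov-function argument. We run the minimax for a perturbed functional where the index information is available, and use an auxiliary Lyapunov function (period or length) that decreases along the pseudo-gradient flow. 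This yields a Palais--Smale sequence with bounded $T$, and on short loops it converges. The limit has length at most $\frac{2\pi m}{b}(1+\varepsilon)$ by Step 2, and lies in $V$ by the localization estimate, for all $m<m_{\varepsilon,V}$. The delicate point is making the index bound and the localization deformation compatible with the two Lyapunov functions uniformly in small $m$. We expect this to be the hardest part of the argument.
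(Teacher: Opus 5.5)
\textbf{Main gap: the length bound and the localization in $V$.} In Step 2 you bound the length of loops \emph{on the circle path}. But the critical point produced by the minimax is not on that path. All you know about it is that it is critical, $\mathcal E(x)=O(m)$, and $\mathcal S_m(x)\le\frac{\pi m^2}{b}(1+\varepsilon)$. For a general loop, an upper bound on the action gives no upper bound on its length: the large circles at the end of your own path have negative action.

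Your localization estimate $\mathcal S_m(x)\gtrsim m\mathcal E(x)-\frac{b-\eta}{4\pi}\mathcal E(x)^2$, for loops near points with $|\beta_q|\le b-\eta$, has the same limitation. It only forbids action $\le c(m)$ in a window of energies around $\frac{2\pi m}{b-\eta}$. Loops near $\partial V$ outside that window sit exactly at level $c(m)$, for example figure-eights with nearly zero flux and $\mathcal E\approx c(m)/m$. So nothing supports a pseudo-gradient that preserves the loops contained in $V$ near level $c(m)$, and restricting the minimax to paths in $V$ is unjustified. The paper never does this; it localizes a posteriori.

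What is missing is an identity valid \emph{only for critical points} that links action, period, and the field strength where the orbit concentrates. The paper gets it from the blow-up of Section~\ref{s:local}:
\begin{itemize}
\item For $m_k\to0$, the periods $T_k=\mathcal E(x_k)/m_k$ lie between the bound of Lemma~\ref{l:lowerboundperiod} and $D$.
\item The orbits collapse to a point $q_*$, and $\dot q_k/m_k$ converges to a periodic solution of $\dot v=B_{q_*}v$.
\item Comparison with the linear model of Lemma~\ref{l:boundslinear} gives $T_k\to T\ge 2\pi/|\beta_{q_*}|$ and $\mathcal S_{m_k}(x_k)=\tfrac12m_k^2T+o(m_k^2)$.
\item With the minimax upper bound this gives $T\le\frac{2\pi}{b}(1+\varepsilon')$. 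Hence you get both the length bound and $|\beta_{q_*}|\ge b/(1+\varepsilon')$, and a subsequence argument makes $m_{\varepsilon,V}$ uniform.
\end{itemize}
Alternatively, your sharp isoperimetric estimate would suffice if you combined it with a virial identity $\mathcal S_m(x)=\tfrac12m\mathcal E(x)+O(m^3)$ for critical points with $\mathcal E(x)=O(m)$. This follows by differentiating $\mathcal S_m$ along dilations in normal coordinates. Some critical-point identity of this kind is indispensable, and your proposal has none.

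\textbf{Step 3.} It is too unspecified to check, and where it is specific it does not match what works.
\begin{itemize}
\item On short loops $T<d_g/m$ automatically. The real danger is not $T_n\to\infty$ but the deformation leaving the region of short loops, where the action is single-valued.
\item The paper's second Lyapunov function is $\mathcal S_{2m}$, not the period or length. On $\{\mathcal S_m\ge0,\ \mathcal S_{2m}\le Dm^2\}$ one has $\mathcal E=(\mathcal S_{2m}-\mathcal S_m)/m\le Dm$ (Lemma~\ref{l:inclusion}).
\item The price of interpolating $-\nabla\mathcal S_m$ with $-\nabla\mathcal S_{2m}$ is that deformed paths may hang at critical points of $\mathcal S_\mu$, $\mu\in[m,2m]$, which are not critical for $\mathcal S_m$.
\item To push paths off these points without increasing either Lyapunov function, the paper deforms inside level sets of $\mathcal E$ (Lemma~\ref{l:local}). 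This uses up one negative direction, so index $\ge3$, not $\ge2$, is needed there. The remaining obstruction then has codimension $2$, which a generic one-parameter path avoids.
\end{itemize}
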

In the next section, we sketch the proof of Theorem \ref{t:C}. In the remainder of this section, we explain how to deduce Theorem~\ref{t:A} from Theorem~\ref{t:C}. The key step is to realize a non-empty, compact strict local maximum as a global maximum for an auxiliary system whose magnetic form is nowhere vanishing and whose base manifold is compact. More precisely, suppose that the system $(g,\beta)$ has a nonempty compact strict local maximum set $K$. We proceed as follows.

Let $\widehat Q$ be the closed manifold obtained by doubling a neighborhood $V$ of $K$. We construct a system $(\hat g,\hat\beta)$ on $\widehat Q$ satisfying the following properties:
\begin{enumerate}[(i)]
    \item $(\hat g,\hat\beta)$ coincides with $(g,\beta)$ on a suitable neighborhood $V'\subset V$ of $K$;
    \item $|\hat\beta|$ attains its unique strict global maximum on $K$;
    \item $\hat\beta$ is nowhere vanishing.
\end{enumerate}
Properties~(i) and~(ii) are achieved by rescaling the metric away from $K$. Property~(iii) is obtained by applying the Thom Jet Transversality Theorem \cite[Chapter 3, Theorem 2.4]{Hirsch1976} to the closed two-form $\beta$ near the boundary of the neighborhood of $K$. Transversality for two-forms requires that $Q$ has dimension at least four, so that
\[
\binom{\dim Q}{2}>\dim Q,
\]
where on the left we have the codimension of the zero section in the total space of the bundle $\Lambda^2T^*\widehat Q\to \widehat Q$ of $2$-forms over $\widehat Q$ (equivalently, the rank of the bundle) and on the right we have the dimension of the zero section of the bundle.

When $Q$ has dimension three, we stabilize the system by lifting it to $\widehat Q\times S^1$ in such a way that contractible magnetic geodesics on $\widehat Q\times S^1$ project to contractible magnetic geodesics on $\widehat Q$ with the same speed \cite{Macarini2004}, and the global strict local maximum on $\widehat Q\times S^1$ projects to the strict global maximum on $\widehat Q$. Applying the localization Theorem~\ref{t:C} to $(\hat g,\hat\beta)$ on $\widehat Q$ and observing that the dynamics of $(g,\beta)$ and $(\hat g,\hat\beta)$ agree on $V'$, completes the proof of Theorem \ref{t:A}. 

\section{Theorem \ref{t:C}: Setting and Sketch of Proof}
In this section, we give a sketch of the proof of Theorem~\ref{t:C}. Our approach relies on the minimax method for the Lagrangian action functional for subcritical energies as described by Contreras in \cite{Contreras2006}, and building on the principle of throwing-out-cycles by Novikov and Taimanov \cite{Novikov82,Taimanov92A}. Thus, we start by recalling the setting and its difficulties, how several authors tackled these difficulties, and which new ideas we employed in the current paper. Throughout this section, $Q$ is assumed to be compact.
\subsection{The Lagrangian Action Functional and the Mañé Critical Value}
Our starting point to study periodic magnetic geodesics with energy 
\begin{equation}
e=\tfrac12m^2    
\end{equation}
is to identify them with the critical points of the free-period action functional $S_e$ on a connected component of the free-loop space of $Q$. For example, the action on the space $\Lambda_0 Q$ of contractible loops on $Q$, whose elements are loops $q\colon\mathbb R/T\mathbb Z\to Q$ bounding a disk $\hat q\colon\mathbb D\to Q$, is given by
\begin{equation}\label{e:sm1}
S_e(q):=\int_{\mathbb R/T\mathbb Z}\Big(\tfrac{1}{2}|\dot q(t)|^2_{q(t)}+e\Big)\mathrm{d}t-\int_{\mathbb D}\hat q^*\beta.
\end{equation}
We note that the period $T$ is not fixed a priori and that, although $dS_e$ is well-defined, $S_e$ may be multivalued, since its value can depend on the homotopy class of $\hat q$ relative to the boundary. In fact, $S_e$ is single-valued on $\Lambda_0 Q$ if and only if $\beta$ is weakly exact, that is, $\beta|_{\pi_2(Q)}=0$.

The action functional provides a precise way to distinguish between the high- and low-energy regimes through Mañé's critical value
\begin{equation}\label{e:mane}
e(g,\beta)=\tfrac12m(g,\beta)^2,\qquad e(g,\beta):=\inf\Big\{e\in\mathbb R\ \Big|\ S_e(q)\geq 0,\ \forall\,q\in\Lambda_0 Q\Big\}\in[0,\infty].
\end{equation}
One can show that if $e(g,\beta)<\infty$, then $S_e$ is single-valued in every free-homotopy class of loops upon substituting the integral over the disc in \eqref{e:sm1} with the integral over a cylinder starting at a reference loop \cite{Merry2010}. For $e>e(g,\beta)$, the existence theory for periodic magnetic geodesics resembles that of standard geodesics of Finsler metrics and, as observed above, the system is virtually Finsler. This means that the lifts of magnetic geodesics to the universal cover of $Q$ are the geodesics of a suitable Finsler metric, up to time reparametrization. From a variational point of view, periodic magnetic geodesics of every energy $e>e(g,\beta)$ can be found by looking for minimizers of the action functional $S_e$ in every non-trivial free-homotopy class of loops \cite{CIPP2000,Merry2010}. Indeed, for $e\geq e(g,\beta)$, $S_e$ is bounded from below on every free-homotopy class of loops, while for $e>e(g,\beta)$ it also satisfies the so-called Palais--Smale compactness condition \cite{PS}.

\subsection{The Palais--Smale Condition}The Palais--Smale condition is a crucial tool for establishing the existence of critical points of functionals on infinite-dimensional spaces by min-max methods. It guarantees that if $(q_k)$ is a sequence such that, for some $c_*\in\mathbb R$,
\begin{equation}\label{e:PS}
\mathrm{d}_{q_k}S_e\to0,\qquad S_e(q_k)\to c_*,
\end{equation}
then there exists a subsequence $(q_{k_j})$ converging to a critical point $q_*$ of $S_e$ with $S_e(q_*)=c_*$; see, for instance, \cite{Abbondandolo1}. For magnetic geodesics, the crucial step in establishing the Palais--Smale condition is to show that the sequence of periods $(T_k)$ of the loops $(q_k)$ is bounded. From \eqref{e:sm1}, the period $T$ of a loop $q$ can be recovered from the difference quotient of the action at two energies $e_1<e_2$:
\begin{equation}\label{e:Lip}
T=\frac{S_{e_2}(q)-S_{e_1}(q)}{e_2-e_1}.
\end{equation}
In the supercritical regime, one takes
\[
e_1:=e(g,\beta),\qquad e_2:=e>e(g,\beta).
\]
The upper bound on $T_k$ then follows from the upper bound on $S_{e_2}(q_k)$ and the fact that $S_{e_1}(q)\geq 0$ for every $q\in\Lambda_0 Q$.

For $e\in(0,e(g,\beta))$, the functional $S_e$ is no longer bounded from below on any connected component of the loop space. On the other hand, the fact that $\{S_e<0\}$ is a non-empty subset of the space of contractible loops $\Lambda_0 Q$ implies that $S_e$ has a \textit{mountain-pass geometry} on $\Lambda_0 Q$. Indeed, $0$ is a local infimum of $S_e$, which is approached on the subset of constant loops $Q_T\subset\Lambda_0 Q$ of period $T$, as the period $T$ converges to $0$. Therefore, considering paths of loops connecting the set of constant loops $Q_T$ with small $T$ to the sublevel set $\{S_e<0\}$ yields a corresponding \textit{minimax scheme} and \textit{minimax value} $c(e)$. However, it is currently unknown whether $S_e$ satisfies the Palais--Smale condition for $e\in(0,e(g,\beta))$. The difficulty is that, even if $S_e(q_k)$ converges, the kinetic and magnetic terms in \eqref{e:sm1} may both diverge. Thus, the problem of establishing the compactness needed for the minimax scheme to produce a critical point, formulated in the 1980s by Novikov and Kozlov \cite{Novikov82,Kozlov}, remained open for many years.

A breakthrough came in 2006, when Contreras \cite{Contreras2006} used Struwe's monotonicity argument \cite{Struwe90} to show that, when $\beta$ is exact, the minimax scheme produces a critical point of $S_e$ with action $c(e)$ at every point $e$ where the minimax function $e\mapsto c(e)$ is locally Lipschitz. The proof again uses \eqref{e:Lip}, now with \begin{equation*}
e_1=e,\qquad e_2=e+\varepsilon,    
\end{equation*}
where $\varepsilon>0$ depends on the Lipschitz constant at $e$. Since the family of functionals $e\mapsto S_e$ is increasing, the same is true for the family of minimax values $e\mapsto c(e)$. Hence, almost every $e\in(0,e(g,\beta))$, with respect to the Lebesgue measure, is a Lipschitzianity point of $c$.

Contreras' argument was later generalized from exact to weakly exact magnetic forms by Merry \cite{Merry2010}, and then to arbitrary magnetic forms in \cite{AsselleBenedetti2016}. Even in this general case, the minimax value $c(e)$ is well-defined since $S_e$ is still single-valued on the set of sufficiently short loops in $\Lambda_0 Q$ (since $Q$ is locally contractible), although it may be multivalued on the whole of $\Lambda_0 Q$.

\subsection{The Magnetic Curvature}
In view of Contreras' result, it is very natural to ask whether contractible magnetic geodesics exist for \emph{every} $e\in(0,e(g,\beta))$, or at least for every $e$ in an interval $(0,e'(g,\beta))$, for some $e'(g,\beta)\in(0,e(g,\beta)]$, and how one can overcome the possible failure of the Palais--Smale condition for $S_e$.

In this direction, a crucial idea was introduced by Bahri and Taimanov in 1998 \cite{BahriTaimanov}, inspired by a scheme developed by Sacks and Uhlenbeck for minimal surfaces \cite{SacksUhlenback}. The idea is that, although critical points $q$ of $S_e$ with prescribed action may have an arbitrarily large period $T$, the Morse index $\mathrm{ind}_e(q)$ of $q$ gives an upper bound for $T$, provided a suitable Ricci-type curvature associated with the system admits a positive lower bound $r$ which is uniform in a neighborhood of $e$. More precisely, the inequality
\begin{equation}\label{e:periodindex}
T\leq C_r\mathrm{ind}_e(q),
\end{equation}
holds for some constant $C_r>0$ depending on $r$. In Riemannian geometry, for example, an estimate of this type lies at the heart of the Bonnet--Myers theorem, which asserts that a complete Riemannian manifold with a positive lower bound on its Ricci curvature is bounded and hence compact \cite[Theorem 19.4]{Milnor}. In our setting, since the minimax is taken over a family of sets of dimension $1$ connecting the two valleys of $S_e$, one obtains an estimate $\mathrm{ind}_e(q)\leq1$, which, combined with \eqref{e:periodindex}, yields a bound on the period for the critical points.

Unfortunately, the Ricci-curvature-type quantity introduced by Bahri and Taimanov is typically not positive at low speeds. For example, it is not positive at low speed if the Ricci curvature of $g$ is somewhere negative. When $Q$ is a surface, it is positive at low speed if and only if the function $b$ is constant and the Gaussian curvature of $g$ is positive. In any case, this Ricci-curvature-type quantity does not have a positive lower bound as the energy goes to zero.

A Bahri--Taimanov-type idea was used a few years later by Ginzburg--Gürel \cite{GG2004,GG2009} and Usher \cite{Usher} in the Hamiltonian setting to obtain an estimate for the Conley--Zehder index analogous to \eqref{e:periodindex}. They used a sectional-curvature-type quantity that, with the additional assumption that $\beta$ is symplectic, is positive when the energy is low. In this way, they obtained the existence of contractible periodic magnetic geodesics at all sufficiently low energies when $\beta$ is symplectic.

Recently, the first-named author introduced in \cite{Assenza2024} a notion of magnetic curvature whose positivity is easier to study compared to the one of \cite{BahriTaimanov}. This notion is very natural since it governs magnetic Jacobi fields \cite{Assenza2025}, has applications to the Brunn--Minkowski inequality \cite{Assouline2,Assouline1}, connection with the hyperbolicity of the flow \cite{Gouda,Grognet,Gro99,Wojtkowski,Hasselblatt}, can be computed for left-invariant structures \cite{RTW}, and is a special case of a more abstract construction for Lagrangian systems \cite{Agrachev,Jakub,AssenzaTestolina2026} and frame flows \cite{Brahim2026}. When $Q$ is an orientable surface, the magnetic curvature was first introduced by Gabriel and Miguel Paternain in the 1990s \cite{PP96,Pat06}. In this case, it is represented by a function $K_{g,\beta,m}\colon SQ\to \mathbb R$ on the unit tangent bundle given by the formula
\begin{equation}\label{e:Kmag}
K_{g,\beta,m}(q,v)=m^2K_g(q)-m\,\mathrm{d}_qb[v^\perp]+b(q)^2,
\end{equation}
where $K_g$ is the Gaussian curvature of $g$, $v^\perp$ is the rotation of $v$ by ninety degrees, and $b$ is the function determined by $\beta=b\mu_g$, see \eqref{e:strength}. 

In \cite{Assenza2024}, the first-named author showed that in every dimension the magnetic Ricci tensor has a structure similar to \eqref{e:Kmag}: for every $q\in Q$, it is homogeneous of degree $2$ in the pair of variables $m$ and $B(q)$, and the part that is quadratic in $B(q)$ vanishes if and only if $B(q)=0$. This implies that the magnetic Ricci curvature is positive at all sufficiently low energies when $\beta$ is nowhere vanishing. Consequently, \cite{Assenza2024} extends the existence of contractible periodic magnetic geodesics at low energies when $\beta$ is nowhere vanishing.

\subsection{The Localization Problem} From the previous subsection, we conclude that when $Q$ is compact and $\beta$ is nowhere vanishing, the minimax method of \cite{Assenza2024} produces contractible periodic magnetic geodesics $q_m$ at speed $m$ whose periods $T_m$ have a uniform upper bound $T_+$ for $m$ in a neighborhood of $0$. Consequently, one obtains the corresponding upper bound
\begin{equation}
\ell(q_m)\leq T_+m
\end{equation}
for the lengths $\ell(q_m)$ of the orbits. Since $Q$ is compact, it follows that, as $m$ tends to zero, the orbits $q_m$ converge, after passing to a subsequence, to a point $q_*\in Q$.

This naturally leads to the question of how to characterize the point $q_*$, or, equivalently, how to localize the orbits $q_m$ for small $m$. This is a crucial question. Indeed, understanding this localization phenomenon in Theorem \ref{t:C} allowed us to extend in Theorem \ref{t:A} the existence of contractible periodic magnetic geodesics at low energies to magnetic fields that are allowed to vanish and to non-compact manifolds, settings in which very little was known.

The clue to this localization phenomenon came from the two-dimensional case. Indeed, if $Q$ is a surface and $q_*\in Q$ is a strict local maximum or minimum of the magnetic strength $b$ defined by \eqref{e:strength}, then for every neighborhood $V$ of $q_*$ there exists $m_V>0$ such that $V$ contains contractible magnetic geodesics of every speed $m\in(0,m_V)$ \cite[Lemma 5.2]{AB2022}. More generally, the same conclusion holds when the point $q_*$ is replaced by a compact strict local maximum or minimum $K\subset Q$ of the magnetic function $b$. 

Let us now present the five new ingredients that allowed us to prove Theorem \ref{t:C}, that is, the localization of the orbits $q_m$ at the global maximum $\{q\in Q\ |\ |\beta_q|=\Vert\beta\Vert_\infty\}$ when $Q$ is compact and $\beta$ is nowhere vanishing.

\subsection{First New Ingredient: Localization via the Action}
The crucial ingredient in the proof of Theorem \ref{t:C} is to localize the orbits $q_m$ obtained by the minimax construction at energy $e=\tfrac12m^2$ using their action value
\begin{equation}
\mathfrak c(m):=c(e)=S_e(q_m).
\end{equation}
In Lemma \ref{l:special}, by choosing a path
\begin{equation}\label{e:pathqr}
r\mapsto q_{m,r},\qquad r\in[0,1],    
\end{equation}
in the minimax class originating from a point $q_*\in Q$ with $|\beta_{q_*}|=\Vert\beta\Vert_\infty$, we show that
\begin{equation}\label{e:mini}
\mathfrak c(m)\leq \frac{\pi m^2}{\Vert\beta\Vert_\infty}+o(m^2),\ \ \text{as }m\to0
\end{equation}
in Lemma \ref{l:upperc}. However, since the action $S_e$ is multivalued on $\Lambda_0 Q$, in order to get meaningful information on $q_m$ from their action, we first need to find a way to restrict the minimax construction to the set of short loops, where the action is single-valued.

\subsection{Second New Ingredient: The Restricted Action Functional}
To work on the set of short loops, we first eliminate the period variable $T$ restricting to the set of loops $q$ such that 
\begin{equation}\label{e:tme}
T=\tfrac1m\mathcal E(x),
\end{equation}
where $x$ is the 1-periodic reparametrization of $q$ and $\mathcal E$ is the $L^2$-norm of the velocity vector of $x$. Linking $T$ to $\mathcal E(x)$ is convenient since there is a constant $d_g>0$ such that
\begin{equation}
\mathcal E(x)<d_g\quad \Longrightarrow\quad q \text{ is a short loop}.  
\end{equation}
We then work with the restricted action
\begin{equation}\label{e:smse}
\mathcal S_m(x):=S_e(q)=m\mathcal E(x)-\int_{\mathbb D}\hat x^*\beta,
\end{equation}
where $\hat x\colon\mathbb D\to Q$ is a capping disk for $x$. Since \eqref{e:tme} is equivalent to $\tfrac{\partial S_e}{\partial T}=0$ and $S_e$ is strictly convex in the variable $T$, working with $\mathcal S_m$ is the same as working with $S_e$, see Lemma \ref{l:crit}. Moreover, since the path $r\mapsto x_{r,m}$ corresponding to $r\mapsto q_{r,m}$ in \eqref{e:pathqr} satisfies
\begin{equation}\label{e:om}
\mathcal E(x_{r,m})=O(m),
\end{equation}
the condition $\mathcal E(x)<d_g$ will not affect the minimax value $\mathfrak c(m)$, when $m$ is small enough.

On the other hand, \eqref{e:smse} now yields, for $m'>m$,
\begin{equation}\label{e:Lip2}
\mathcal E(x)=\frac{\mathcal S_{m'}(x)-\mathcal S_{m}(x)}{m'-m}.
\end{equation}
Therefore, if we apply Struwe's monotonicity argument, we get an upper bound on $\mathcal E(x)$ along a Palais--Smale sequence that depends on the local Lipschitz constant of $\mathfrak c(m)$ at $m$. However, we do not have an estimate for this constant that forces $\mathcal E(x)<d_g$ for all $m$ small enough. Indeed, the constant is obtained from the abstract fact that every monotone function is locally Lipschitz almost everywhere. Therefore, Struwe's idea cannot be used for our purposes.

\subsection{Third New Ingredient: Two Lyapunov Functions}
Our approach is instead based on a two-Lyapunov-function argument. The possible relevance of this argument for the existence of periodic orbits of autonomous Tonelli Lagrangian systems was observed by Abbondandolo in \cite[Section 8.3]{Abbondandolo1}, who had previously used it with Majer in the study of geodesics in Lorentzian geometry \cite{AbbondandoloMajer2008}.

The idea is to deform the paths in the minimax family using a vector field $W_m$ that is a perturbation of the negative gradient of $\mathcal S_m$. We now sketch the construction of $W_m$ after two preliminary definitions. The details are contained in Lemma \ref{l:wm} and Lemma \ref{l:phi}.

First, by the positivity of the magnetic Ricci curvature, there is $T_0>0$ such that for all $\mu>0$ small enough the critical points $x$ of $\mathcal S_\mu$ with period $T=\tfrac{1}{\mu}\mathcal E(x)$ satisfy
\begin{equation}\label{e:lowerT}
T\geq T_0\qquad \Longrightarrow \qquad \mathrm{ind}_\mu(x)\geq 3,
\end{equation}
see \eqref{e:periodindex} and Lemma \ref{l:index}. In principle, we could have taken any lower bound in \eqref{e:lowerT} instead of $3$. However, as will be clear in the next subsection, $3$ is the smallest lower bound that makes our argument work.

Second, we fix $m'>m$ and let $\mathcal R_0$ and $\mathcal R_1$ be the regions where $\mathcal E(x)> T_0m'$ and $\mathcal E(x)>T_1m'$, respectively, where $T_1>T_0$ is arbitrary. By shrinking $m$ (and $m'$), we can assume that $T_1m'<d_g$. 

We are ready to define the vector field $W_m$. On the complement of $\mathcal R_0$, the vector field $W_m$ coincides with the negative gradient of $\mathcal S_m$, while on $\mathcal R_0$ we interpolate between the negative gradients of $\mathcal S_m$ and $\mathcal S_{m'}$. We construct this interpolation in such a way that
\begin{equation}\label{e:dsm0}
\mathrm{d}\mathcal S_{m'}[W_m]\leq0\qquad \text{on }\mathcal R_1.
\end{equation}
In other words, $\mathcal S_{m'}$ does not increase along the flow lines of $W_m$ in $\mathcal R_1$. 

Thanks to \eqref{e:om} and \eqref{e:smse}, we can now restrict the minimax geometry to the set
\begin{equation}\label{e:set}
\big\{0\leq\mathcal S_m(x)\leq\mathcal S_{m'}(x)< \tfrac13D(mm'+m^2)\big\}, 
\end{equation}
for some $C>0$. We choose $m'=am$ with $a>1$. In the paper and in the discussion below, we take $a=2$,  for simplicity. In this case, \eqref{e:Lip2} implies that on the set given in \eqref{e:set} we have
\begin{equation}
\mathcal E(x)< D m,
\end{equation}
see Lemma \ref{l:inclusion}. By enlarging $C$ and shrinking $m$, we ensure in Lemma \ref{l:D} that
\begin{equation}
\mathcal E(x)\leq T_1m'\ \ \Longrightarrow\ \ \mathcal S_{m'}(x)\leq \tfrac12Dm^2,\qquad Dm<d_g. 
\end{equation}
Under these hypotheses, the flow of $W_m$ is forward-complete on the set given in \eqref{e:set}, a necessary condition for running the minimax scheme in that subset of short loops.

\subsection{Fourth New Ingredient: Slice Deformations}
The two-Lyapunov-function argument shows that we can perform the minimax argument in the set of short loops by replacing the negative gradient of $\mathcal S_m$ by the perturbed vector field $W_m$. The price to pay for this is the inequality \eqref{e:dsm0}, which implies that the forward-flow of $W_m$ might produce a path in the minimax class that also hangs on critical points of $\mathcal S_\mu$, for some $\mu\in[m,m']$, lying at the level $\mathfrak c(m)$ in the region $\mathcal R_0$. To overcome this difficulty, we observe that the restriction of $\mathcal S_\mu$ to a hypersurface $\{\mathcal E(x)=e_*\}$ in $\mathcal R_0$ has index at least $3-1=2$. Therefore, by transversality \cite{Abraham0}, a generic path in the minimax class, whose dimension is $1<2$, can be further deformed to decrease the action $\mathcal S_\mu$ while keeping $\mathcal E$ fixed, see Lemma \ref{l:local} and Lemma \ref{l:sigma}. By \eqref{e:smse}, this means that $\mathcal S_m$ and $\mathcal S_{m'}$ also decrease. By combining these slice deformations with the flow of $W_m$ in Lemma \ref{l:tildesigma}, we obtain paths in the minimax class that can hang only at critical points of $\mathcal S_m$ at the level $\mathfrak c(m)$, see Theorem \ref{t:D}.

\subsection{Fifth New Ingredient: The Blow-up Analysis}
In summary, as output of the minimax construction performed above, for every $e=\tfrac12m^2$ small enough, we get periodic magnetic geodesics $q_m$ with speed $m$, and satisfying
\begin{equation}\label{e:sl}
S_e(q_m)\leq \frac{\pi m^2}{\Vert \beta\Vert_\infty}+o(m^2),\qquad \ell(q_m)=m T(q_m)=O(m).
\end{equation}
Since $Q$ is compact and the lengths of the orbits $q_m$ tend to zero, after passing to a subsequence, the orbits $q_m$ converge to a point $q_*\in Q$ as $m\to0$. Moreover, since their periods are bounded from above, and can be shown to be bounded away from zero, the rescaled tangent vectors $\tfrac1m\dot q_m$ converge to a periodic solution $v$ of the linearized system on $T_{q_*}Q$ given by
\begin{equation}\label{e:linear0}
\dot v=B_{q_*}v.
\end{equation}
The periods and actions of the periodic solutions of \eqref{e:linear0} can easily be computed, see Section \ref{s:linear}. By the analysis in Section \ref{s:local}, they imply that
\begin{equation}
T(q_m)\geq\frac{2\pi}{|\beta_{q_*}|}+o(1),\qquad S_e(q_m)= \tfrac12m^2 T(q_m)+o(m^2).
\end{equation}
Combined with the action estimate in \eqref{e:sl}, they yield
\[
|\beta_{q_*}|\geq \Vert\beta\Vert_\infty,\qquad \ell(q_m)=mT(q_m)\leq \frac{2\pi m}{\Vert\beta\Vert_\infty}+o(m).
\]
This completes the proof of Theorem \ref{t:C}.

\section*{Structure of the Paper and Acknowledgments}
\subsection*{Structure of the paper}
The remainder of this paper is organized as follows.

In Section~\ref{s:from3to4}, we show how to reduce Theorem~\ref{t:A} from three to four dimensions. Then, we use Thom's Jet Transversality Theorem in dimension four to derive Theorem~\ref{t:A} from Theorem~\ref{t:C}. 

In Section~\ref{s:linear}, we give a complete description of the linearized magnetic dynamics on tangent spaces and estimate the period and action of their periodic orbits. These properties will be used in Section~\ref{s:local} to localize the orbits obtained with the minimax scheme.

In Section~\ref{s:Contractible Magnetic Geodesics as Critical Points}, we introduce the variational framework. More precisely, on the space of short loops, we define a single-valued Lagrangian action functional whose critical points are contractible periodic magnetic geodesics with prescribed energy. We also construct local slice deformations around critical points of large Morse index, keeping the energy fixed and decreasing the action.

In Section~\ref{s:The Minimax Geometry}, we formulate Theorem~\ref{t:D}, which will be instrumental to prove Theorem~\ref{t:C}, about the existence of critical points via a minimax scheme at low energy, whose admissible classes consist of paths joining loops near the constants to loops with zero action.

In Section~\ref{s:Deforming the Paths in the Minimax Family}, using a two-Lyapunov-function argument, we construct a suitable pseudogradient to deform the minimax classes. Combined with the local deformations near critical points of large Morse index introduced in Section~\ref{s:Contractible Magnetic Geodesics as Critical Points}, this allows us to prove Theorem~\ref{t:D}.

In Section~\ref{s:local}, using the estimates established in Section~\ref{s:linear}, we analyze the limit of the period and the action of the contractible periodic magnetic geodesic given by Theorem \ref{t:D}, as the energy goes to zero. These estimates enables us to localize the magnetic geodesics and bound their length, thus completing the proof of Theorem~\ref{t:C}.

Finally, in Section~\ref{A Non-compact Example without Periodic Magnetic Geodesics at Low Speed}, we construct a magnetic system whose global maximum set is noncompact and which admits no periodic magnetic geodesics at sufficiently small energies. This shows that the compactness assumption on the local maximum set in Theorem~\ref{t:A} cannot, in general, be omitted.

\subsection*{Acknowledgements}The authors are indebted to SISSA and to Paolo Caldiroli at Università di Torino for providing the financial means and the stimulating atmosphere for the research stay in May and June 2026 of the first- and third-named author in Italy, where part of this work has been completed. The authors thank Luca Asselle for inspiring discussions. A large language model was used for preliminary literature search assistance.

\section{From Theorem \ref{t:C} to Theorem \ref{t:A}}\label{s:from3to4}
\subsection{From Dimension Three to Dimension Four} We already know that Theorem \ref{t:A} holds when $Q$ is a surface by \cite[Lemma 5.2]{AB2022}. We now want to show that knowing Theorem \ref{t:A} for four-dimensional manifolds implies it for three-dimensional manifolds using an idea that goes back to \cite{Macarini2004}. In the next subsection, we will show how Theorem \ref{t:C} implies Theorem \ref{t:A} for manifolds of dimension at least four.

 Thus, let $Q$ be a manifold of dimension three and consider the stabilized manifold $\tilde Q:=S^1\times Q$. Denote by $\theta$ the angular coordinate on $S^1$ and by $\pi\colon \tilde Q\to Q$ the projection onto the second factor. We define
 \begin{equation}\label{e:decouple}
    \tilde g:=\mathrm{d}\theta^2+\pi^* g,\qquad \tilde\beta:=\pi^*\beta.
 \end{equation}
 Assume that $\tilde q\colon I\to\tilde Q$ is a magnetic geodesic with speed $\tilde m>0$ on $\tilde Q$. Since the magnetic geodesic equation \eqref{e:mag} associated with \eqref{e:decouple} decouples, we get for some $a\in\mathbb R$ the formula \begin{equation}\label{e:lifted}
 \tilde q(t):=(\theta_0+at,q(t)),    
 \end{equation}
 where $q:=\pi(\tilde q)$ is a magnetic geodesic on $Q$ with speed $m$ such that
 \begin{equation}
     \tilde m^2=m^2+a^2.
 \end{equation}
 If $\tilde q$ is a contractible periodic magnetic geodesic, then \eqref{e:lifted} forces $a=0$ and therefore, $\tilde m=m$. Since $|\tilde\beta_{(\theta,q)}|=|\beta_q|$ for all $(\theta,q)\in \tilde Q$, it follows that if $K$ is a strict local maximum for $\beta$ then $\tilde K:=S^1\times K$ is a strict local maximum for $\tilde \beta$ with $b_{\tilde K}=b_K$.
 
 Let $\varepsilon>0$ and let $V\subset Q$ be an open neighborhood of a non-empty, compact, strict local maximum $K$ for $\beta$. Since we are assuming that Theorem \ref{t:A} holds for four-manifolds, we can apply it to $\tilde Q$, $\varepsilon$ and $\tilde V:=S^1\times V$. We get $\tilde m_{\varepsilon,\tilde V}>0$ such that for every $m\in(0,\tilde m_{\varepsilon,\tilde V})$, there is a contractible periodic magnetic geodesic $\tilde q$ with speed $\tilde m$ and length at most $\frac{2\pi\tilde m}{b_{\tilde K}}(1+\varepsilon)$ contained in $\tilde V$. By the argument above, we conclude that $q=\pi(\tilde q)$ is a contractible magnetic geodesic contained in $V$ with speed $m:=\tilde m$ and length at most $\frac{2\pi\tilde m}{b_{K}}(1+\varepsilon)$.   
\subsection{Compactification of the Ambient Space}
In this subsection, we show how Theorem \ref{t:C} implies Theorem \ref{t:A} when $Q$ has dimension at least four.
Let $K$ be a non-empty, compact, strict local maximum of the magnetic strength of $Q$. Let $\varepsilon>0$ and $V$ be any open neighborhood of $K$. By the definition of strict local maximum, up to shrinking $V$, we can assume that
\begin{equation}\label{e:strict}
|\beta_q|=b_K,\quad\forall\,q\in K,\qquad|\beta_q|<b_K,\quad \forall\,q\in V\setminus K,
\end{equation}
for some $b_K>0$, see Definition \ref{d:strict}.

Let $W$ be an open neighborhood of $K$, whose closure is compact, contained in $V$, and such that
\begin{equation}\label{e:12bk}
\inf_{q\in W}|\beta_q|\geq \tfrac12b_K.
\end{equation}
We can also assume that the boundary of $W$ is smooth. Indeed, $K$ is the zero set of a smooth non-negative and coercive function $f\colon V\to[0,\infty)$. By Sard's theorem, we can let $W$ to be a sublevel set of $f$ with corresponding level that is regular.

Let
\begin{equation}
F\colon [-r_0,r_0]\times \partial W\to V    
\end{equation}
be a parameterization of a neighborhood of $\partial W$ disjoint from $K$ and with the property that the collar $[-r_0,0]\times\partial W$ is mapped into $W$ and $\{0\}\times\partial W$ is mapped onto $\partial W$. We write \begin{equation}\label{e:U}
W':=W\setminus F\big([-r_0,0]\times\partial W\big).
\end{equation}
Thus, $W'$ is an open neighborhood of $K$ with compact closure contained in the interior of $W$. Let $V'$ be any open neighborhood of $K$ in $Q$ such that the closure of $V'$ is contained in $W'$.

Let us denote by $\pi\colon (-r_0,r_0)\times \partial W\to\partial W$ the projection onto the second factor. Since $\pi$ is a deformation retraction, on $(-r_0,r_0)\times \partial W$ the magnetic form $\beta$ can be written as
\begin{equation}\label{e:fbeta}
F^*\beta=\pi^*\eta+\mathrm{d}\alpha,
\end{equation}
where $\eta$ is a closed two-form on $\partial W$ and $\alpha$ is a one-form on $(-r_0,r_0)\times \partial W$. Given $0<r_2<r_1<r_0$, we consider a cut-off function $\chi\colon(-r_0,0)\to\mathbb R$ such that
\begin{enumerate}[(i)]
    \item $\chi(r)=1$ for all $r\in(-r_0,-r_1)$;
    \item $\chi(r)=0$ for all $r\in(-r_2,0)$.
\end{enumerate}
Define a new closed two-form $\beta'$ on $W$ by
\begin{equation}\label{e:beta'}
\beta':=\beta\ \ \text{on }W',\qquad F^*\beta':=\pi^*\eta+\mathrm{d}(\chi\alpha)\ \ \text{on }(-r_0,0]\times\partial W.
\end{equation}
The form $\beta'$ is smooth by \eqref{e:fbeta} and the definition of $\chi$.

Let $\widehat Q$ be the double of the manifold $W$, which is obtained by
\begin{equation}
\widehat Q:= W\sqcup_\varphi W, 
\end{equation}
where
\begin{equation}
\varphi\colon(-r_2,r_2)\times \partial W\to(-r_2,r_2)\times \partial W,\qquad \varphi(r,x):=(-r,x).
\end{equation}
We have $\pi\circ\varphi=\pi$ and $F^*\beta'=\pi^*\eta$ on $(-r_2,r_2)\times\partial W$, by \eqref{e:beta'} and the definition of $\chi$. Therefore, 
\begin{equation}
\varphi^*(F^*\beta')=\varphi^*(\pi^*\eta)=(\pi\circ\varphi)^*\eta=\pi^*\eta=F^*\beta'.
\end{equation}
Therefore, $\beta'$ extends to a closed two-form $\hat\beta'$ on $\widehat Q$ that coincides with $\beta$ on $W'$ inside the first copy of $W$.

Let $\Lambda^1\widehat Q\to \widehat Q$ be the cotangent bundle of $\widehat Q$, and denote by $\mathrm{Sec}(\Lambda^1\widehat Q)$ the space of its smooth sections, that is, smooth differential one-forms. Consider the first jet bundle $J^1(\Lambda^1 \widehat Q)$ of $\Lambda^1\widehat Q\to\widehat Q$ \cite[Section 1.4]{CEM}. If we fix an affine connection $\nabla$ on $\widehat Q$, then $J^1(\Lambda^1\widehat Q)\to \Lambda^1\widehat Q$ is an affine bundle, whose fiber $J^1(\Lambda^1\widehat Q)_{(q,\eta)}$ at $(q,\eta)\in \Lambda^1\widehat Q$ is given by the set of all $\zeta\in\mathrm{Sec}(\Lambda^1\widehat Q)$ such that $\zeta_q=\eta$ quotiented by the equivalence relation $\sim_q$ given by
\begin{equation}
\zeta\sim_q\zeta'\quad\Longleftrightarrow\quad (\nabla\zeta)_q=(\nabla\zeta')_q.
\end{equation}
Denoting by $[\zeta]_q$ the equivalence class of any such $\zeta$, we have an isomorphism
\begin{equation}
J^1(\Lambda^1\widehat Q)_{(q,\eta)}\to \mathrm{Hom}(T_q\widehat Q,T^*_q\widehat Q)\cong T_q^*\widehat Q\otimes T^*_q\widehat Q,\qquad [\zeta]_q\mapsto (\nabla\zeta)_q.
\end{equation}
Moreover, if $\mathrm{Sec}(J^1(\Lambda^1\widehat Q))$ is the space of smooth sections of $J^1(\Lambda^1\widehat Q)\to\widehat Q$, there is a jet map
\begin{equation}
j^1\colon \mathrm{Sec}(\Lambda^1\widehat Q)\to \mathrm{Sec}(J^1(\Lambda^1\widehat Q)),\quad j^1(\zeta)_q:=(\zeta_q,[\zeta]_q),\quad\forall\,q\in\widehat Q,
\end{equation}
whose image is the space of holonomic sections.

Let us now suppose that $\nabla$ is a symmetric connection and consider the affine submersion
\begin{equation}
\mathcal A\colon J^1(\Lambda^1 \widehat Q)\to \Lambda^2 \widehat Q,\qquad \mathcal A(q,\eta,[\zeta]_q):=(q,\mathcal A_q(\nabla\zeta)_q),
\end{equation}
where $\mathcal A_q$ takes the antisymmetric part of $(\nabla\alpha)_q\in T_q^*\widehat Q\otimes T^*_q\widehat Q$, see \cite[Section 4.7.A]{CEM} Since $\nabla$ is symmetric, applying $\mathcal A$ on holonomic sections, we get the exterior differential
\begin{equation}\label{e:ext}
\mathcal A\big(j^1(\zeta)\big)_q=\mathrm{d}_q\zeta,\qquad\forall\,\zeta\in \mathrm{Sec}(\Lambda^1\widehat Q),\ \ \forall\,q\in\widehat Q.
\end{equation}

Let $\Gamma_{\hat\beta'} \subset \Lambda^2 \widehat Q$ be the graph of $\hat \beta'\in\mathrm{Sec}(\Lambda^2\widehat Q)$ and consider
\begin{equation}\label{e:A}
\mathcal X:=\mathcal A^{-1}(\Gamma_{\hat\beta'})\subset J^1(\Lambda^1\widehat Q). 
\end{equation}
Since $\mathcal A$ is a submersion, the fiberwise codimension of $\mathcal X$ is the same as the fiberwise codimension of $\Gamma_{\hat\beta'}$, which is $\binom{n}{2}$, where $n:=\dim\widehat Q$. Since $n=\dim \widehat Q=\dim Q\geq 4$, we have $n<\binom{n}{2}$ and therefore, for all $\zeta\in \mathrm{Sec}(\Lambda^1\widehat Q)$, we get 
\begin{equation}
j^1(\zeta)\colon \widehat Q\to J^1(\Lambda^1\widehat Q) \text{ is transverse to $\mathcal X$}\quad\Longleftrightarrow\quad j^1(\zeta)\colon \widehat Q\to J^1(\Lambda^1\widehat Q) \text{ does not intersect $\mathcal X$}.   
\end{equation}
On the other hand, given the open set $W'$ of $\widehat Q$ defined in \eqref{e:U}, by \eqref{e:12bk} we have
\begin{equation}
\hat\beta'_q=\beta_q\neq0_q,\qquad \forall\,q\in W'.
\end{equation}
Thus, $0\in \mathrm{Sec}(\Lambda^1\widehat Q)$ is transverse to $\mathcal X$ on $W'$. Applying Thom Jet Transversality \cite[Chapter 3, Theorem 2.4]{Hirsch1976} relative to $W'$ (see also \cite[Section 4.7.B]{CEM}), we get a one-form $\zeta$ on $\widehat Q$ such that
\begin{enumerate}[(i)]
    \item $\mathrm{d}_q\zeta=\mathcal A\big(j^1(\zeta)\big)_q\neq \hat\beta'_q$ for all $q\in \widehat Q$,
    \item $\zeta_q=0$ for all $q\in W'$,   
\end{enumerate}
where we used equation \eqref{e:ext} and the definition \eqref{e:A} of $\mathcal X$. We define the two-form on $\widehat Q$
\begin{equation}
\hat\beta:=\hat\beta'-\mathrm{d}\zeta,    
\end{equation}
which, by the above discussion, has the properties
	\begin{enumerate}[(i)]
		\item $\hat\beta_q\neq0$ for all $q\in \widehat Q$;
		\item $\hat\beta_q=\beta_q$ for all $q\in W'$;
		\item $\hat\beta$ is closed.
	\end{enumerate}
    
Let now $\hat g'$ be any Riemannian metric on $\widehat Q$ such that
\begin{equation}\label{e:hatg'}
\hat g'=g\quad \text{on }W'.    
\end{equation}
Consider a positive function $f\colon\widehat Q\to(0,\infty)$, which we specify below, and define
\begin{equation}
\hat g:=f\hat g'.
\end{equation}
Then,
\begin{equation}
|\hat\beta_q|_{\hat g}=\frac{1}{f(q)}|\hat\beta_q|_{\hat g'},\qquad \forall\,q\in \widehat Q.
\end{equation}
Since $V'$ has compact closure in $W'$, we can choose $f$ so that 
\begin{enumerate}[(i)]
    \item $f(q)=1$ for all $q\in V'$;
    \item $f(q)\geq 1$ for all $q\in W'$;
    \item $f(q)\geq (\tfrac12b_K)^{-1}\Vert \hat\beta\Vert_{\infty,\hat g'}$ for all $q\notin W'$.
\end{enumerate}
Keeping \eqref{e:hatg'} in mind, we deduce
\begin{enumerate}[(i)]
\item $\hat g=g$ on $V'$;
\item $|\hat\beta_q|\leq|\beta_q|$ for all $q\in W'$;
\item $|\hat\beta_q|\leq \tfrac12 b_K$ for all $q\notin W'$.
\end{enumerate}
It follows that $\Vert \hat\beta\Vert_\infty:=\Vert \hat\beta\Vert_{\infty,\hat g}=b_K$.

Given $\varepsilon>0$ and $V'$, we apply Theorem \ref{t:C} to the compact manifold $\widehat Q$ with Riemannian metric $\hat g$ and nowhere vanishing magnetic form $\hat\beta$. There exists $m_{\varepsilon,V'}>0$ such that for every $m\in(0,m_{\varepsilon,V'})$ there exists a contractible periodic magnetic geodesic $\hat q$ for $\hat g$ and $\hat \beta$ on $\hat Q$ with speed $m$ and length at most $\tfrac{2\pi m}{\Vert\hat\beta\Vert_\infty}(1+\varepsilon)=\tfrac{2\pi m}{b_K}(1+\varepsilon)$ contained in $V'$. Since $V'\subset V$, and $\hat g=g$, $\hat\beta=\beta$ on $V'$, the curve $q:=\hat q$ is a periodic magnetic geodesic of $g$ and $\beta$ with the properties asserted in Theorem \ref{t:A}. The argument is thus completed.

\section{The Lorentz Endomorphism and the Linearized Flow}\label{s:linear}
In this section, we study the action and period of magnetic geodesics of constant Riemannian metric and magnetic form on a linear space. This study will be used in Section \ref{s:local} for localizing a sequence of minimax magnetic geodesics with energy converging to zero.

\subsection{Magnetic Geodesics for Linear Systems}Let $Q$ be any manifold endowed with a Riemannian metric $g$ and a magnetic form $\beta$. Fix $q\in Q$. Recall that $B_q\colon T_qQ\to T_qQ$ is the Lorentz endomorphism, which represents $\beta_q$ with respect to $g_q$ according to equation \eqref{e:endo}. Let $SQ\subset TQ$ denote the unit tangent bundle of $g$. We also recall that
\begin{equation}
|B_q|:=\sup_{v\in S_qQ}|B_qv|,\qquad |\beta_q|:=|B_q|.
\end{equation}

Since $B_q$ is antisymmetric with respect to $g$, the spectral theorem tells us that there are integers $j,k$, real numbers $b_1\geq \ldots\geq b_j>0$, and an orthonormal basis $u_1,v_1,u_2,v_2,\ldots,u_j,v_j,w_1,\ldots,w_k$ of $T_qM$ such that
\begin{equation}
B_qu_h=b_hv_h, \ \ B_qv_h=-b_hu_h,\ \ \forall\,h=1,\ldots,j,\qquad B_q w_h=0,\ \ \forall\,h=1,\ldots k.
\end{equation} 
In particular, $u_1,v_1,\ldots,u_j,v_j$ is an orthonormal basis of $(\ker B_q)^\perp$, and $w_1,\ldots,w_k$ of $\ker B_{q}$. Moreover, we have
\begin{equation}
b_1=|B_q|=|\beta_q|.
\end{equation}
Let us fix $q_*\in Q$ and consider the solutions $v\colon\mathbb R\to T_{q_*}Q$ of the linear system
\begin{equation}\label{e:linear}
\dot v= B_{q_*}v.
\end{equation}
Using the orthonormal basis, we identify 
\begin{equation}\label{e:identify}
T_{q_*}Q\cong (\ker B_{q_*})^\perp\times\ker B_{q_*}\cong\mathbb C^j\times\mathbb R^k.
\end{equation}
We compute the solutions of \eqref{e:linear} with initial condition $v_0:=(z,w)\in \mathbb C^j\times \mathbb R^k$ as
\begin{equation}\label{e:linearv}
v(t)=(e^{ib_1t}z_1,\ldots, e^{ib_jt}z_j,w),
\end{equation}
where $z=(z_1,\ldots,z_j)$.\bigskip

Assume that $z\neq 0$. Then, there exists $h\in\{1,\ldots,j\}$ such that $z_h\neq 0$. If we also assume that $v$ is periodic with period $T$, then there exists $\nu_h\in\mathbb N$ such that  
\begin{equation}
b_hT=2\pi \nu_h.
\end{equation}
We deduce that
\begin{equation}\label{e:linearperiod}
T=\frac{2\pi\nu_h}{b_h}\geq \frac{2\pi}{|\beta_{q_*}|}.
\end{equation}
Finally, given \eqref{e:linear}, we can solve the equation \eqref{e:mag} of magnetic geodesics $q\colon\mathbb R\to T_{q_*}Q$ on the linear space $T_{q_*}Q$ with respect to the constant metric $g_{q_*}$ and the constant magnetic field $\beta_{q_*}$:
\begin{equation}\label{e:qt}
q(t)=q_0+\Big((e^{ib_1t}-1)\frac{z_1}{ib_1},\ldots, (e^{ib_jt}-1)\frac{z_j}{ib_j},tw\Big).
\end{equation}

\subsection{Action and Period Estimates}After this preparation, we can state and prove the next lemma, which will be crucial for the localization in Theorem \ref{t:C}. 

\begin{lem}\label{l:boundslinear}
Let $q$ be a non-constant, periodic magnetic geodesic of the pair $(g_{q_*},\beta_{q_*})$ on the linear space $T_{q_*}Q$. Then, the period $T$ of $q$ satisfies the lower bound
\begin{equation}\label{e:linearperiod2}
T\geq \frac{2\pi}{|\beta_{q_*}|}.
\end{equation}
If $q$ has energy $e>0$, then the action $S_e^{q_*}(q)$ of $q$ with respect to the constant metric $g_{q_*}$ and the constant magnetic field $\beta_{q_*}$ on $T_{q_*}Q$ (see \eqref{e:sm1}) satisfies the equation
\begin{equation}\label{e:linearaction}
S_e^{q_*}(q)=eT.
\end{equation}    
\end{lem}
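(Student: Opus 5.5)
The plan is to work entirely with the explicit solution formula \eqref{e:qt}. We use the identification \eqref{e:identify}, under which $g_{q_*}$ is the standard Euclidean metric on $\mathbb C^j\times\mathbb R^k$. The velocity $v=\dot q$ solves \eqref{e:linear}, so it has the form \eqref{e:linearv} with initial condition $(z,w)$, and $q$ is given by \eqref{e:qt}.

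\textbf{Period bound.} Suppose $q$ is periodic of period $T$. The last component of $q$ is $tw$, so periodicity forces $w=0$. Since $q$ is non-constant, its velocity is nonzero, hence $z\neq0$. The velocity $v=\dot q$ is then a $T$-periodic solution of \eqref{e:linear} with $z\ne 0$, and the computation leading to \eqref{e:linearperiod} applies: some $z_h\neq0$ forces $b_hT\in2\pi\mathbb N$. Therefore
\[
T\geq 2\pi/b_h\geq 2\pi/b_1=2\pi/|\beta_{q_*}|,
\]
which is \eqref{e:linearperiod2}.

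\textbf{Action identity.} Since $q$ has energy $e$, we have $\tfrac12|\dot q|^2=e$, so the kinetic part of \eqref{e:sm1} equals $\int_0^T(e+e)\,\mathrm dt=2eT$. It remains to show that the magnetic term equals $eT$. On the vector space $T_{q_*}Q$ the constant form $\beta_{q_*}$ is exact, with primitive
\[
\alpha_x[u]=\tfrac12\beta_{q_*}(x-q_0,u),
\]
and any capping disk gives the same value because the space is contractible. By Stokes' theorem,
\[
\int_{\mathbb D}\hat q^*\beta_{q_*}=\tfrac12\int_0^T\beta_{q_*}(q-q_0,\dot q)\,\mathrm dt=\tfrac12\int_0^T g(B_{q_*}(q-q_0),\dot q)\,\mathrm dt.
\]
From \eqref{e:qt} with $w=0$, applying $B_{q_*}$, which acts as multiplication by $ib_h$ on the $h$-th factor, gives
\[
B_{q_*}(q(t)-q_0)=v(t)-v(0)=\dot q(t)-\dot q(0).
\]
Hence the integrand equals $|\dot q|^2-g(\dot q(0),\dot q(t))$. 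The second term integrates to $g\big(\dot q(0),q(T)-q(0)\big)=0$ by periodicity. The magnetic term is therefore $\tfrac12\int_0^T|\dot q|^2\,\mathrm dt=eT$, and
\[
S_e^{q_*}(q)=2eT-eT=eT,
\]
which is \eqref{e:linearaction}.

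The only delicate point is the sign and orientation convention for the capping disk, i.e.\ making sure $\int_{\mathbb D}\hat q^*\beta=\oint q^*\alpha$ with $d\alpha=\beta$. This needs to be checked consistently against \eqref{e:endo} and the conventions of \eqref{e:sm1}. With the convention $\mathrm d\alpha=\beta$, differentiating $\alpha$ gives $\mathrm d\alpha=\tfrac12(\beta-(-\beta))=\beta$, so the computation above is consistent.
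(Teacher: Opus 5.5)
Your proposal is correct and follows essentially the paper's route: you use the explicit solution \eqref{e:qt}, get $w=0$ from periodicity to reduce the period bound to \eqref{e:linearperiod}, and compute the magnetic term as $eT$ via Stokes with the linear primitive of the constant form. The only cosmetic difference is in that last step. The paper first translates the orbit so the circle centers sit at the origin, so that $B_{q_*}q=\dot q$, and then computes coordinate by coordinate. You instead center the primitive at $q_0$ and remove the cross term $g(\dot q(0),\dot q(t))$ using periodicity.
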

\begin{proof}
Since $q$ is periodic and given by \eqref{e:qt}, we get $w=0$ and therefore $\tfrac12|z|^2=e>0$. Thus, $\dot q$ is periodic and satisfies \eqref{e:linear}, and, therefore, \eqref{e:linearperiod2} follows from \eqref{e:linearperiod}.

To compute the action, we recall the formula introduced in \eqref{e:sm1} that consists of a kinetic piece and a magnetic piece. The kinetic piece reads
\begin{equation}
\int_{\mathbb R/T\mathbb Z}\Big(\tfrac{1}{2}|\dot q(t)|^2_{q(t)}+e\Big)\mathrm{d}t=2eT.
\end{equation}
To compute the magnetic piece, we observe the following three facts:
\begin{enumerate}[(i)]
    \item The integral is invariant under translations of $q$. Thus, we can simply assume
    \begin{equation}
q(t)=\Big(e^{ib_1t}\frac{z_1}{ib_1},\ldots, e^{ib_jt}\frac{z_j}{ib_j},0\Big).
    \end{equation}
    \item The magnetic form $\beta_{q_*}$ is equal to $b_h \mathrm{d}x_h\wedge \mathrm{d}y_h$ in the $h$-th complex coordinate $q_h=x_h+iy_h$. Therefore, the integral is the sum of the $j$ integrals in each complex coordinate of $\mathbb C^j$.
    \item The two-form $b_h\mathrm{d}x_h\wedge \mathrm{d}y_h$ has the primitive one-form $b_h\alpha$, where $\alpha_{q_h}[u_h]=\tfrac12(iq_h)\cdot u_h$, which we can use to compute the integrals using Stokes' Theorem.
\end{enumerate}
Therefore, we obtain
\begin{equation}
\begin{aligned}
\int_{\mathbb D}\hat q^*\beta=\sum_{h=1}^j\int_{\mathbb D}\hat q_h^*(b_h\mathrm{d}x_h\wedge \mathrm{d}y_h)=\sum_{h=1}^j\int_{\mathbb R/T\mathbb Z}b_hq_h^*\alpha&=\sum_{h=1}^j\int_{\mathbb R/T\mathbb Z}b_h\frac12 \frac{ie^{ib_h t}z_h}{ib_h}\cdot e^{ib_ht}z_h \mathrm{d}t\\
&=\sum_{h=1}^j\int_{\mathbb R/T\mathbb Z}\frac12 |z_h|^2 \mathrm{d}t\\
&=eT.
\end{aligned}
\end{equation}
Putting the kinetic and the magnetic pieces together, we get
\begin{equation}
S_e^{q_*}(q)=2eT-eT=eT.
\end{equation}
\end{proof}
\section{Contractible Closed Magnetic Geodesics as Critical Points}\label{s:Contractible Magnetic Geodesics as Critical Points}
We now start to set up the variational scheme that we will use to prove Theorem \ref{t:C}. 

\subsection{The Free-period Lagrangian Action Functional}Let $Q$ be a compact manifold endowed with a Riemannian metric $g$ and a magnetic form $\beta$ such that
\begin{equation}\label{e:nowhere}
\beta_q\neq0,\qquad\forall\,q\in Q.    
\end{equation}
Let $d_g>0$ be a number that is strictly less than twice the injectivity radius of $g$. Let $q\colon\mathbb R/T\mathbb Z\to Q$ be any absolutely continuous loop with period $T>0$ such that
\begin{equation}\label{e:small}
\int_{\mathbb R/T\mathbb Z}|\dot q(t)|^2_{q(t)}\mathrm{d}t<\infty,\qquad \ell(q):=\int_{\mathbb R/T\mathbb Z}|\dot q(t)|_{q(t)}\mathrm{d}t<d_g.
\end{equation}
For any such loop and for every energy $e=\tfrac12m^2>0$, we define the action $S_e$ given in \eqref{e:sm1} by choosing any capping disk $\hat q\colon \mathbb D\to Q$ for $q$ entirely contained in the injectivity ball of $g$ centered at $q(0)$. Such a capping disk exists by \eqref{e:small}. We can decouple the dependence of $S_e$ from the period $T$ by considering the bijection
\begin{equation}
q\longleftrightarrow (x,T),\qquad  \text{where }x\colon \mathbb R/\mathbb Z\to Q,\  x(s):=q(sT),\ \forall\,s\in\mathbb R/\mathbb Z.
\end{equation}
Under this identification, we obtain
\begin{equation}
S_e(x,T)=\tfrac12\mathcal E(x)^2T^{-1}+eT-\mathcal B(x),
\end{equation}
where 
\begin{equation}
\mathcal E(x):=\Big(\int_{\mathbb R/\mathbb Z}|\dot x(s)|^2\mathrm{d}s\Big)^{\frac12}
\end{equation}
is the energy of $x$ and 
\begin{equation}
\mathcal B(x):=\int_{\mathbb D}\hat x^*\beta
\end{equation}
is the magnetic flux through the capping disk $\hat x\colon \mathbb D\to Q$ that corresponds to the disk $\hat q$.

We let 
\begin{equation}
\Omega:=\Big\{x\colon \mathbb R/\mathbb Z\to Q \text{ absolutely continuous}\ \Big|\ 0<\mathcal E(x)<d_g\Big\},
\end{equation}
which has the structure of a Hilbert manifold and we denote the induced distance function by
\begin{equation}\label{e:distance}
\mathrm{dist}\colon \Omega\times\Omega\to [0,\infty).
\end{equation}
More precisely, $\Omega$ is an open subset of $\Lambda_0Q$ of contractible loops, whose Hilbert-manifold structure is described, for example, in \cite{Lecturesonclosedgeodesics,Abbondandolo1}. In particular, the tangent space $T_x\Omega$ is the set of absolutely continuous vector fields $\xi\colon \mathbb R/\mathbb Z\to x^*TQ$ along $x$ such that the square norm
\begin{equation}\label{e:norm}
\Vert\xi\Vert^2:=\int_{\mathbb R/\mathbb Z}|\xi(s)|^2\mathrm{d}s+\int_{\mathbb R/\mathbb Z}|D_s\xi(s)|^2\mathrm{d}s
\end{equation}
is finite. The inner product on $T_x\Omega$ is the one induced by this norm.

Endowing $\Omega\times(0,\infty)$ with the product Hilbert manifold structure, we get the free-period Lagrangian action functional 
\begin{equation}
S_e\colon\Omega\times(0,\infty)\to\mathbb R, \quad(x,T)\mapsto S_e(x,T),    
\end{equation}
which is
\begin{enumerate}[(i)]
\item well-defined, since
\begin{equation}
\ell(x)\leq \mathcal E(x)<d_g;
\end{equation}
\item smoothly Frèchet differentiable, since both $\mathcal E$ and $\mathcal B$ are smooth by \cite{Abbondandolo1}. 
\end{enumerate}
More precisely, the differentials of $\mathcal E$ and $\mathcal B$ satisfy some uniform estimates contained in the next lemma. In particular, the lower bound on $\Vert \mathrm{d}\mathcal E\Vert$ quantifies the fact that there are no critical points of $\mathcal E$ on $\Omega$. Indeed, such critical points would correspond to non-constant, standard periodic geodesics of the metric $g$ with length less than twice the injectivity radius, and such geodesics do not exist.
\begin{lem}\label{l:differentials}
There exists a constant $R_g>0$ depending on $g$ such that
\begin{equation}
R_g\leq\Vert \mathrm{d}_x\mathcal E\Vert\leq 1,\qquad \Vert \mathrm{d}_x\mathcal B\Vert\leq \Vert\beta\Vert_\infty\mathcal E(x)\leq \Vert \beta\Vert_\infty d_g,\qquad \forall\,x\in\Omega.
\end{equation}
\end{lem}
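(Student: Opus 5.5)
We compute the two differentials explicitly and bound them separately; the only non-explicit step is the lower bound on $\Vert \mathrm{d}_x\mathcal E\Vert$.

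\emph{Differential of $\mathcal E$.} Since $\mathcal E(x)=\big(\int|\dot x|^2\big)^{1/2}$, for $\xi\in T_x\Omega$ we have
\begin{equation*}
\mathrm{d}_x\mathcal E[\xi]=\frac{1}{\mathcal E(x)}\int_{\mathbb R/\mathbb Z}g(\dot x,D_s\xi)\,\mathrm{d}s .
\end{equation*}
Cauchy--Schwarz gives $|\mathrm{d}_x\mathcal E[\xi]|\leq \Vert D_s\xi\Vert_{L^2}\leq\Vert\xi\Vert$, hence $\Vert \mathrm{d}_x\mathcal E\Vert\leq 1$.

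\emph{Differential of $\mathcal B$.} The variation of the flux through the capping disk is the flux through the thin strip swept by the boundary:
\begin{equation*}
\mathrm{d}_x\mathcal B[\xi]=\int_{\mathbb R/\mathbb Z}\beta_{x}(\xi,\dot x)\,\mathrm{d}s
\end{equation*}
(up to sign). The capping disks stay in an injectivity ball, so $\beta$ being closed makes this independent of the disk choice. Then
\begin{equation*}
|\mathrm{d}_x\mathcal B[\xi]|\leq \Vert\beta\Vert_\infty\int|\xi||\dot x|\leq\Vert\beta\Vert_\infty\Vert\xi\Vert_{L^2}\mathcal E(x)\leq \Vert\beta\Vert_\infty\mathcal E(x)\Vert\xi\Vert,
\end{equation*}
and $\mathcal E(x)<d_g$ gives the last inequality.

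\emph{Lower bound on $\Vert \mathrm{d}_x\mathcal E\Vert$.} This is the main obstacle, since it must be uniform on the noncompact set $\Omega$ of loops of arbitrarily small energy. I would exhibit an explicit test field. The loop $x$ lies in the injectivity ball of radius $r_g>d_g/2$ around $x(0)$, which we identify with a ball in $T_{x(0)}Q$ via $\exp_{x(0)}$. A first attempt is the radial contraction $\xi(s)=-\exp_{x(0)}^{-1}(x(s))$, the infinitesimal shrinking of the loop towards $x(0)$, i.e.\ the variation $x_\lambda(s)=\exp_{x(0)}((1-\lambda)\exp^{-1}_{x(0)}x(s))$.

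In flat space this variation gives $\mathrm{d}\mathcal E[\xi]=-\mathcal E$ and $\Vert\xi\Vert\lesssim \mathcal E$, since $|\xi|\leq\ell(x)\leq\mathcal E$. In the curved case, compactness of $Q$ gives uniform bounds on the metric in normal coordinates on balls of radius $<r_g$, so the metric coefficients and their derivatives are bounded uniformly in the center. This should yield
\begin{equation*}
\mathrm{d}\mathcal E[\xi]\leq -c\,\mathcal E(x), \qquad \Vert\xi\Vert\leq C\mathcal E(x),
\end{equation*}
and hence $\Vert \mathrm{d}_x\mathcal E\Vert\geq c/C=:R_g$.

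The delicate point is that the strict contraction estimate $\mathrm{d}\mathcal E[\xi]\leq-c\mathcal E$ needs convexity of the distance-squared function on the relevant ball. This holds for radii below the convexity radius, not merely below the injectivity radius. If this fails near radius $d_g/2$, I would instead argue by contradiction: take $x_k\in\Omega$ with $\Vert \mathrm{d}_{x_k}\mathcal E\Vert\to0$. If $\mathcal E(x_k)$ stays bounded away from $0$, we obtain a Palais--Smale sequence for the energy, which converges to a nonconstant closed geodesic of length $\leq d_g$; none exists, as remarked before the lemma. If $\mathcal E(x_k)\to0$, the loops lie in tiny balls where the flat estimate above applies, again a contradiction.
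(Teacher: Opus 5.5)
Your proof is correct. For $\Vert\mathrm{d}_x\mathcal E\Vert\le 1$ and for the bound on $\mathrm{d}_x\mathcal B$ you do exactly what the paper does: the variation formula $\mathrm{d}_x\mathcal B[\xi]=\int\beta_x(\xi,\dot x)\,\mathrm{d}s$ followed by Cauchy--Schwarz.

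The only difference is the lower bound $R_g\le\Vert\mathrm{d}_x\mathcal E\Vert$. The paper does not prove it; it cites Klingenberg (Proposition~1.4.14). You derive it instead. Your contradiction argument is the part that works in general, so present it as the proof rather than as a fallback. It has two cases:
\begin{itemize}
\item If $\mathcal E(x_k)\ge a>0$, then $\Vert\mathrm{d}_{x_k}(\tfrac12\mathcal E^2)\Vert\le d_g\Vert\mathrm{d}_{x_k}\mathcal E\Vert\to0$. Palais--Smale for the energy on the free loop space of the compact $Q$ then yields a nonconstant closed geodesic of length at most $d_g<2\,\mathrm{inj}(g)$, which is impossible.
\item If $\mathcal E(x_k)\to0$, use the radial contraction together with the uniform normal-coordinate bounds $g_{ij}(y)=\delta_{ij}+O(|y|^2)$, $\partial g=O(|y|)$, $\Gamma=O(|y|)$. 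These give $\mathrm{d}\mathcal E[\xi]\le-\tfrac12\mathcal E$ and $\Vert\xi\Vert\le C\mathcal E$.
\end{itemize}

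The second case is not a formality. Since $\mathrm{d}\mathcal E=\mathrm{d}(\tfrac12\mathcal E^2)/\mathcal E$ and Palais--Smale sequences of the energy may converge to constant loops, compactness alone says nothing about $\mathrm{d}\mathcal E$ near the constants. A scale-invariant estimate is needed there.

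Your hesitation about using the contraction globally is also justified. Under the contraction, the first variation of $|\dot x|^2$ is $-2a^2-2u^2\,r\,\tfrac{\mathrm d}{\mathrm dr}\log|J(r)|$. Here $a$ and $u$ are the radial and tangential speeds, and $J$ is a Jacobi field with $J(0)=0$. On the round unit sphere, where $\mathrm{inj}=\pi$, the second term equals $-2u^2 r\cot r>0$ for $r\in(\pi/2,\pi)$. So no pointwise estimate is available up to radius $d_g/2$.

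The citation buys brevity. Your route is self-contained and shows exactly where the compactness of $Q$ and the choice $d_g<2\,\mathrm{inj}(g)$ enter. In effect it turns into a proof the heuristic the paper states just before the lemma.
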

\begin{proof}
The lower bound on $\Vert \mathrm{d}_x\mathcal E\Vert$ was proven in \cite[Proposition 1.4.14]{Lecturesonclosedgeodesics}. To prove the upper bound, we observe that
\[
\mathrm{d}_x\mathcal E[\xi]
=\frac{1}{\mathcal E(x)}
\int_{\mathbb R/\mathbb Z}
g_{x(s)}\left(\dot x(s),D_s\xi(s)\right)\mathrm{d}s,
\qquad
\forall\,\xi\in T_x\Omega.
\]
Applying the Cauchy--Schwarz inequality, we obtain, recalling the definition of the norm \eqref{e:norm},
\[
\big|\mathrm{d}_x\mathcal E[\xi]\big|
\leq
\left(
\int_{\mathbb R/\mathbb Z}
\vert D_s\xi(s)\vert^2\mathrm{d}s
\right)^{\frac{1}{2}}\leq \Vert\xi\Vert.
\]
It follows that
\[
\Vert \mathrm{d}_x\mathcal E\Vert\leq1,
\qquad
\forall\, x\in\Omega.
\]
A further application of the Cauchy--Schwarz inequality yields, for every $x\in\Omega$ and every $\xi\in T_x\Omega$
\begin{align*}
\vert \mathrm{d}_x\mathcal B[\xi]\vert
&=
\left|\int_{\mathbb R/\mathbb Z}
\beta_{x(s)}
\big(\xi(s),\dot x(s)\big)\mathrm{d}s\right|\leq
\Vert\beta\Vert_\infty \Vert \xi\Vert
\mathcal E(x)
\leq \Vert\beta\Vert_\infty d_g\Vert\xi\Vert.
\end{align*}
Thus,
\begin{equation}
\Vert \mathrm{d}_x\mathcal B\Vert
\leq
\Vert\beta\Vert_\infty d_g, \qquad \forall\, x \in \Omega.
\end{equation}
\end{proof}

Finally, let us discuss the critical points of $S_e$. By \cite{Contreras2006}, for every $q=(x,T)\in\Omega\times(0,\infty)$, 
\begin{equation}
q\text{ is a magnetic geodesic with energy $e$}\quad\Longleftrightarrow\quad \text{$q$ is a critical point of $S_e$.}
\end{equation}
We denote the set of such critical points by $\mathrm{Crit}\,S_e$. If $q\in \mathrm{Crit }\,S_e$, we let $\mathrm{d}^2_qS_e$ denote the second Fr\`echet differential of $S_e$ at $q$, which is a symmetric bilinear form on $T_q \big(\Omega\times(0,\infty)\big)$. The Morse index $\mathrm{ind}_e q$ of $q$ is the maximal dimension of a subspace $\mathcal W\subset T_q\big(\Omega\times(0,\infty)\big)$, where $
\mathrm{d}^2_qS_e$ is negative definite, that is,
\begin{equation}
\mathrm{d}^2_qS_e[w,w]<0,\qquad \forall\,w\in\mathcal W\setminus\{0\}.
\end{equation}
As shown in \cite[Proposition~3.1]{AbboSchwarz}, the self-adjoint operator associated with $\mathrm{d}_q^2S_e$ is a compact perturbation of a positive Fredholm operator. Consequently, we have
\begin{equation*}
    \mathrm{ind}_e (q)<\infty,\quad\forall\,q\in\mathrm{Crit}\,S_e.
\end{equation*}
\subsection{The Restricted Action Functional}
For our purposes, we will need to eliminate the period variable $T$ from the variational setting. To this aim, we notice that
\begin{equation}
\frac{\partial S_e}{\partial T}(x,T)=e-\tfrac12\mathcal E(x)^2T^{-2},\qquad \frac{\partial^2 S_e}{\partial T^2}(x,T)=\mathcal E(x)^2T^{-3}.
\end{equation}
In particular, recalling that $e=\tfrac12m^2$,
\begin{equation}\label{e:graph}
\frac{\partial S_e}{\partial T}(x,T)=0,\qquad\Longleftrightarrow\qquad T=\frac{\mathcal E(x)}{m}.
\end{equation}
Thus,
\begin{equation}
\frac{\partial S_e}{\partial T}\big(x,\tfrac{1}{m}\mathcal E(x)\big)=0,\qquad \frac{\partial^2 S_e}{\partial T^2}\big(x,\tfrac{1}{m}\mathcal E(x)\big)>0.
\end{equation}
This means that the critical points of $S_e$ lie on the graph $\{T=\tfrac1m\mathcal E(x)\}\subset \Omega\times(0,\infty)$ and that restricting the functional to the graph we get the same critical points with the same Morse index, since $S_e$ is strictly convex in the variable $T$.

Thus, we will work with the restricted action functional
\begin{equation}
\mathcal S_m\colon \Omega\to\mathbb R,\qquad \mathcal S_m(x):=S_e\big(x,\tfrac1m\mathcal E(x)\big)=m\mathcal E(x)-\mathcal B(x).
\end{equation}
We can recover the period of the corresponding orbit $q$ from the function
\begin{equation}\label{e:Tm}
T_m\colon \Omega\to\mathbb R,\qquad T_m(x)=\frac{1}{m}\mathcal E(x).
\end{equation}
By the chain rule, the restricted functional $\mathcal S_m$ is smoothly Frèchet-differentiable since the same is true for $S_e$. In particular, by \eqref{e:graph} and \eqref{e:Tm} we get
\begin{equation}\label{e:eqdiff}
\mathrm{d}_x\mathcal S_m=\frac{\partial S_e}{\partial x}(x,T_m(x)),\qquad\forall\,x\in\Omega.
\end{equation}
Thus, the discussion above yields the following result about critical points of $\mathcal S_m$.
\begin{lem}\label{l:crit}
Let $m>0$ and $T>0$. Let $x\in\Omega$ and $q:=(x,T)\in\Omega\times(0,\infty)$. The following three statements are equivalent
\begin{enumerate}[(i)]
    \item $x$ is a critical point of $
\mathcal S_m$ with $T_m(x)=T$;
\item $q$ is a critical point of $S_e$ with $e=\tfrac12m^2$;
\item $q$ is a periodic magnetic geodesic with speed $m$ and period $T$.
\end{enumerate}
If any of the above statements hold, then $\mathrm{ind}_m(x)=\mathrm{ind}_e(q)$.\hfill\qed
\end{lem}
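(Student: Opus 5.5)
The plan is to reduce everything to the convexity of $S_e$ in the period variable together with the identity \eqref{e:eqdiff}. Throughout, $e=\tfrac12m^2$.

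\emph{(ii)$\Leftrightarrow$(iii).} This is the cited characterization from \cite{Contreras2006}: a pair $q=(x,T)\in\Omega\times(0,\infty)$ is a critical point of $S_e$ exactly when the loop $t\mapsto x(t/T)$ is a magnetic geodesic of energy $e$, i.e. of speed $m$, with period $T$. The only thing to check is that the capping-disk convention on short loops does not affect the differential. Any two capping disks inside the injectivity ball are homotopic relative to the boundary, so the flux term is locally single-valued and its first variation is the usual $\int\beta(\xi,\dot x)$.

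\emph{(i)$\Leftrightarrow$(ii).} Suppose first that $q$ is critical for $S_e$. Then $\partial_TS_e(x,T)=0$, so \eqref{e:graph} gives $T=\mathcal E(x)/m=T_m(x)$. By \eqref{e:eqdiff}, $\mathrm d_x\mathcal S_m=\partial_xS_e(x,T_m(x))=0$.

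Conversely, suppose $\mathrm d_x\mathcal S_m=0$ and $T=T_m(x)$. Then $\partial_TS_e(x,T)=0$ by \eqref{e:graph}, and $\partial_xS_e(x,T)=0$ by \eqref{e:eqdiff}. Hence $\mathrm dS_e$ vanishes at $q$.

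\emph{Equality of indices.} This is the step needing care. Write $\tau(x)=\mathcal E(x)/m$, so that $\mathcal S_m=S_e\circ\Gamma$ with $\Gamma(x)=(x,\tau(x))$. At a critical point, the chain rule gives
\[
\mathrm d^2_x\mathcal S_m[\xi,\xi]=\mathrm d^2_qS_e[\mathrm d\Gamma\,\xi,\mathrm d\Gamma\,\xi],
\]
because the term involving $\mathrm d^2\Gamma$ is paired with $\mathrm d_qS_e=0$.

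Next I split $T_q(\Omega\times(0,\infty))=\mathrm{im}\,\mathrm d\Gamma\oplus\mathbb R\partial_T$. I claim this splitting is $\mathrm d^2_qS_e$-orthogonal. Differentiating the identity $\partial_TS_e(x,\tau(x))\equiv0$ in $x$ gives
\[
\partial_x\partial_TS_e+\partial_T^2S_e\cdot \mathrm d\tau=0,
\]
which says exactly that $\mathrm d^2_qS_e[\mathrm d\Gamma\,\xi,\partial_T]=0$. On the $\partial_T$ direction the Hessian equals $\partial_T^2S_e=\mathcal E(x)^2T^{-3}>0$.

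With this orthogonal splitting, the Hessian is block-diagonal with one positive block. Hence the maximal negative-definite subspaces of $\mathrm d^2_qS_e$ and of $\mathrm d^2_x\mathcal S_m$ have the same dimension, which gives $\mathrm{ind}_m(x)=\mathrm{ind}_e(q)$.

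I expect no real obstacle beyond justifying this orthogonality, which is the implicit-function identity displayed above.
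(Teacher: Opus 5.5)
Your proposal is correct and takes the same route as the paper, which derives the lemma from the characterization in \cite{Contreras2006}, from \eqref{e:graph} placing the critical points of $S_e$ on the graph $\{T=\tfrac1m\mathcal E(x)\}$, from \eqref{e:eqdiff}, and from the strict convexity of $S_e$ in $T$. Your $\mathrm d^2_qS_e$-orthogonal splitting $\mathrm{im}\,\mathrm d\Gamma\oplus\mathbb R\partial_T$ makes explicit the index equality that the paper leaves to the phrase ``since $S_e$ is strictly convex in the variable $T$''.
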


We will be interested in the critical points of $\mathcal S_m$ when $m$ is small. In this case, we can use Lemma \ref{l:differentials} to give a lower bound on their period.
\begin{lem}\label{l:lowerboundperiod}
For all $m>0$ and $x\in \mathrm{Crit}\,\mathcal S_m$, we have the lower bound
\begin{equation*}
    \frac{R_g}{\Vert\beta\Vert_\infty}\leq T_m(x),
\end{equation*}
where $R_g$ is the constant given in Lemma \ref{l:differentials}.
\end{lem}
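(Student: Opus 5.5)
At a critical point $x$ of $\mathcal S_m=m\mathcal E-\mathcal B$ we have $\mathrm{d}_x\mathcal S_m=0$, that is,
\[
m\,\mathrm{d}_x\mathcal E=\mathrm{d}_x\mathcal B .
\]
The plan is to take operator norms of both sides and then apply the two estimates of Lemma~\ref{l:differentials}. The point is that the lower bound on $\Vert\mathrm d\mathcal E\Vert$ holds uniformly on all of $\Omega$, while the bound on $\Vert\mathrm d\mathcal B\Vert$ is linear in $\mathcal E(x)$. Comparing the two then gives a lower bound on $\mathcal E(x)/m$, which is exactly $T_m(x)$.

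Concretely, I would carry out three steps.

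\textbf{Step 1.} Since $x\in\Omega$, Lemma~\ref{l:differentials} gives $\Vert\mathrm{d}_x\mathcal E\Vert\geq R_g$. Therefore
\[
mR_g\leq m\Vert\mathrm{d}_x\mathcal E\Vert=\Vert\mathrm{d}_x\mathcal B\Vert .
\]

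\textbf{Step 2.} Apply the middle inequality of Lemma~\ref{l:differentials}, $\Vert\mathrm{d}_x\mathcal B\Vert\leq\Vert\beta\Vert_\infty\mathcal E(x)$. Combined with Step 1 this yields
\[
mR_g\leq\Vert\beta\Vert_\infty\mathcal E(x).
\]

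\textbf{Step 3.} The hypothesis \eqref{e:nowhere} gives $\Vert\beta\Vert_\infty>0$, and $Q$ compact makes it finite, so we may divide by $m\Vert\beta\Vert_\infty$. Recalling \eqref{e:Tm}, this gives
\[
\frac{R_g}{\Vert\beta\Vert_\infty}\leq\frac{\mathcal E(x)}{m}=T_m(x).
\]

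There is no real obstacle here. The only point needing care is that the lower bound $\Vert\mathrm d_x\mathcal E\Vert\geq R_g$ is valid at every $x\in\Omega$, including at critical points of $\mathcal S_m$. It is valid because $\Omega$ contains no closed geodesics, as their lengths would be less than $d_g$, which is below twice the injectivity radius. This was already built into Lemma~\ref{l:differentials}, so the whole argument is a direct consequence of that lemma.
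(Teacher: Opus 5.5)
Your proposal is correct and is exactly the paper's argument. At a critical point, $\mathrm{d}_x\mathcal E=\tfrac1m\mathrm{d}_x\mathcal B$; chaining $R_g\leq\Vert\mathrm{d}_x\mathcal E\Vert$ with $\Vert\mathrm{d}_x\mathcal B\Vert\leq\Vert\beta\Vert_\infty\mathcal E(x)$ from Lemma~\ref{l:differentials} and writing $\mathcal E(x)/m=T_m(x)$ gives the bound. Your explicit check that $0<\Vert\beta\Vert_\infty<\infty$ justifies the final division, a step the paper leaves implicit.
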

\begin{proof}
If $x \in \mathrm{Crit}\,\mathcal S_m$, then
\begin{equation*}
     \mathrm{d}_x \mathcal{E} = \frac{1}{m}\mathrm{d}_x \mathcal{B}
\end{equation*}
Therefore, combining the lower bound on $\Vert \mathrm{d}_x\mathcal E\Vert$ with the upper bound on $\Vert \mathrm{d}_x\mathcal B\Vert$ in Lemma \ref{l:differentials}, we obtain
\begin{equation*}
R_g\leq  \Vert \mathrm{d}_x \mathcal E\Vert = \frac{1}{m} \Vert \mathrm{d}_x \mathcal{B} \Vert \leq \frac{1}{m}\Vert \beta \Vert_\infty \mathcal{E}(x) = \Vert \beta \Vert_\infty T_m(x).
 \end{equation*}
\end{proof}
When $\mu>0$ is small, the positivity of the magnetic Ricci curvature at speed $\mu$ introduced in \cite{Assenza2024} implies that a lower bound on the period gives a corresponding lower bound on the Morse index $\mathrm{ind}_\mu(x)$ of the critical points of $S_\mu$. For later purposes, we state this implication when the lower bound on the index is equal to $3$.
\begin{lem}\label{l:index}
There exist $m_0>0$ and $T_0>0$ such that 
\begin{equation}
\forall\,\mu\in(0,2m_0),\ \ \forall\,x\in\mathrm{Crit\,}S_\mu,\qquad \mathcal E(x)\geq T_0\mu\quad \Longrightarrow\quad\mathrm{ind}_\mu(x)\geq 3.
\end{equation}
\end{lem}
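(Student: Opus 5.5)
Lemma~\ref{l:index} will follow from the positivity of the magnetic Ricci curvature of \cite{Assenza2024} via a Bonnet--Myers-type second variation argument. The point is to produce, along a long periodic magnetic geodesic, many linearly independent variations on which the Hessian of the free-period action is negative.

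\emph{Step 1: curvature.} We invoke the structural result recalled above: at every $q$ the magnetic Ricci curvature at speed $\mu$ equals a quadratic form in $B_q$ plus terms of order $\mu|B_q|$ and $\mu^2$. The quadratic part is positive whenever $B_q\neq0$. By \eqref{e:nowhere} and compactness of $Q$, there are $m_0>0$ and $r>0$ such that, for all $\mu\in(0,2m_0)$, the magnetic Ricci curvature at speed $\mu$ is bounded below by $r$ on the unit tangent bundle. Two scalings need care. Written in the arc-length parametrization, the magnetic Ricci term is $\mathrm{Ric}_\mu\sim|B|^2$ per unit time. I would therefore work in time $t$ rather than length, so that $r$ is a lower bound independent of $\mu$ for the curvature term along orbits parametrized by time.

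\emph{Step 2: index estimate.} Let $q=(x,T)$ be a critical point of $S_\mu$ with period $T=\mathcal E(x)/\mu$. Take an orthonormal frame $e_1,\dots,e_{n-1}$ of $\dot q^\perp$ that is parallel with respect to the magnetic connection used in \cite{Assenza2024}, i.e.\ the one for which the magnetic Jacobi equation takes Riccati form. Split $[0,T]$ into $N$ intervals of length $T/N$. On each interval $I_k$, consider the fields $\xi_{k,i}=\sin(\pi N t/T)\,\mathbb 1_{I_k}e_i$. These fields are supported on disjoint intervals and vanish at the endpoints, so they are admissible variations with $T$ fixed. The second variation formula gives
\begin{equation*}
\sum_i \mathrm{d}^2S_\mu[\xi_{k,i},\xi_{k,i}]\leq\int_{I_k}\Big((n-1)\big(\tfrac{\pi N}{T}\big)^2\cos^2-r\sin^2\Big)\,\mathrm{d}t .
\end{equation*}
This is negative once $T/N>\pi\sqrt{(n-1)/r}$. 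Hence for each $k$ at least one $\xi_{k,i}$ is negative, and these negative directions have disjoint supports, so they span a negative definite subspace. This gives $\mathrm{ind}_\mu(q)\geq N\geq \lfloor T\sqrt{r}/(\pi\sqrt{n-1})\rfloor$. Choosing $T_0:=4\pi\sqrt{(n-1)/r}$ yields $\mathrm{ind}_\mu\geq3$ whenever $T\geq T_0$, i.e.\ whenever $\mathcal E(x)\geq T_0\mu$. Since the variations keep $T$ fixed and lie in $T_q(\Omega\times(0,\infty))$, Lemma~\ref{l:crit} transfers the bound to $\mathrm{ind}_\mu(x)$.

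\emph{Main obstacle.} The hard part is the exact form of the second variation. The magnetic term contributes first-order cross terms of the form $\beta(\xi,D_t\xi)$ as well as the $\nabla\beta$ term. These are exactly what \cite{Assenza2024} absorbs into the magnetic Ricci curvature by using the magnetic frame rather than a parallel frame. I would quote that second variation identity directly, checking that the cross terms either cancel on sums over the frame or are bounded by $\mu$ times a constant, absorbed for $\mu<2m_0$. The transversal frame also has to be periodic. Since only compactly supported fields on subintervals are used, no periodicity or closing condition is needed.
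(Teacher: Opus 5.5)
Your high-level route is the paper's. Step~1 is exactly \cite[Proposition 7]{Assenza2024}. The paper then simply quotes the Bonnet--Myers-type bound $T_\mu(x)\le\pi r(\mathrm{ind}_\mu(x)+1)$ of \cite[Lemma 14]{Assenza2024} and sets $T_0=4\pi r$.

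\textbf{The gap.} The problem is your own proof of that bound: the test space in Step~2 is wrong, and the claimed inequality fails on it. For fixed-period variations,
\begin{equation*}
\mathrm{d}^2S_e[\xi,\xi]=\int_0^T\big(|D_t\xi|^2-\beta(\xi,D_t\xi)-\langle R(\xi,\dot q)\dot q,\xi\rangle-(\nabla_\xi\beta)(\xi,\dot q)\big)\mathrm{d}t.
\end{equation*}
If $\xi\perp\dot q$, then $\langle D_t\xi,\dot q\rangle=-\langle\xi,B\dot q\rangle$. Splitting off the $\dot q$-components, which cancel, gives the following, with $\nabla^\perp_t$ the normal connection and $B^\perp$ the compression of $B$ to $\dot q^\perp$:
\begin{equation*}
\mathrm{d}^2S_e[\xi,\xi]=\int_0^T\Big(\big|\nabla^\perp_t\xi-\tfrac12B^\perp\xi\big|^2-\tfrac14|B^\perp\xi|^2-\langle R(\xi,\dot q)\dot q,\xi\rangle-(\nabla_\xi\beta)(\xi,\dot q)\Big)\mathrm{d}t.
\end{equation*}
This holds whatever frame you use. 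Hence for every unit $e\perp\dot q$,
\begin{equation*}
\mathrm{d}^2S_e[fe,fe]\ge\int\big(f'^2-f^2(\tfrac14|B^\perp e|^2+C\mu)\big)\mathrm{d}t.
\end{equation*}
The only $\mu$-independent negative contribution is $\tfrac14|B^\perp e|^2$. It vanishes whenever $B_q$ has rank two and $\dot q$ lies in its plane. That happens identically on surfaces, and for every $\dot q\perp\ker B_q$ in dimension three, which are precisely the directions carried by the short orbits (cf.\ Lemma~\ref{l:blowupbb}).

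A concrete counterexample: take a flat torus with $\beta=b\,\mathrm{d}x\wedge\mathrm{d}y$. The $k$-fold circles of radius $\mu/b$ are critical points with $T=2\pi k/b$. For normal fields $\xi=y\nu$ one computes $\mathrm{d}^2S_e[\xi,\xi]=\int y'^2\,\mathrm{d}t>0$, yet the index is at least $2(k-1)$.

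So no ``magnetic frame'' of $\dot q^\perp$ can produce your inequality with a $\mu$-independent $r$. The $b^2$-part of the Paternain/Assenza curvature is seen only by variations that carry the tangential (reparametrization) component dictated by energy conservation. For the same reason, the cross term $\beta(\xi,D_t\xi)$ is not $O(\mu)$: it is of order one and is the very source of the positivity.

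\textbf{The repair.} Either quote \cite[Lemma 14]{Assenza2024} as the paper does, or drop the restriction to $\dot q^\perp$. Since $B$ is skew, $D_t-\tfrac12B$ is a metric connection along $q$. On each $I_k$, take an orthonormal frame $e_1,\dots,e_n$ of the whole $T_{q(t)}Q$ that is parallel for it. Completing the square, $|D_t\xi|^2-\beta(\xi,D_t\xi)=|D_t\xi-\tfrac12B\xi|^2-\tfrac14|B\xi|^2$, gives
\begin{equation*}
\sum_{i=1}^n\mathrm{d}^2S_e[fe_i,fe_i]=\int_{I_k}\Big(nf'^2-f^2\Big(\tfrac14\sum_{i=1}^n|B_qe_i|^2+O(\mu)\Big)\Big)\mathrm{d}t.
\end{equation*}
Here the $O(\mu)$ collects the Ricci term, which is $O(\mu^2)$, and the $\nabla\beta$ term, which is $O(\mu)$. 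Meanwhile $\tfrac14\sum_i|B_qe_i|^2=\tfrac12\sum_hb_h^2\ge\tfrac12|\beta_q|^2\ge\tfrac12\min_Q|\beta|^2>0$ by \eqref{e:nowhere}.

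Your disjoint-support argument then runs verbatim with $n$ in place of $n-1$. Set $\kappa:=\tfrac14\min_Q|\beta|^2$ and choose $m_0$ small enough that the effective curvature stays $\ge\kappa$ for $\mu<2m_0$. Then $\mathrm{ind}_\mu\ge3$ whenever $T\ge T_0:=4\pi\sqrt{n/\kappa}$. This is a self-contained proof that does not need the magnetic Ricci tensor of \cite{Assenza2024} at all; the different constant $T_0$ is irrelevant for the paper's purposes.
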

\begin{proof}
Let $\mu>0$. By Lemma \ref{l:crit}, there is a bijection between critical points $x\in\mathrm{Crit}\,\mathcal S_\mu$ and critical points $(x,T)\in \mathrm{Crit}\,S_e$, where $e=\tfrac12\mu^2$. This bijection preserves the Morse index, that is, $\mathrm{ind}_\mu(x)=\mathrm{ind}_e(x,T)$.  By \eqref{e:nowhere} and \cite[Proposition 7]{Assenza2024} and the compactness of the unit sphere bundle of $Q$, there exists $m_0>0$ such that the magnetic Ricci curvature at speed $\mu\in(0,2m_0)$ has a positive lower bound $\frac{1}{r^2}$. By Lemma \cite[Lemma 14]{Assenza2024}, this implies that 
\begin{equation}
T_\mu(x)\leq \pi r(\mathrm{ind}_e(x,T)+1). 
\end{equation}
Thus, to guarantee $\mathrm{ind}_\mu(x)\geq 3$ we require
\begin{equation}
4\leq \frac{T_\mu(x)}{\pi r}.
\end{equation}
Therefore, it is enough to put $T_0:=4\pi r$ to get $4\mu\pi r\leq \mathcal E(x)$. 
\end{proof}

We will construct critical points of $\mathcal S_m$ using Palais--Smale sequences. In the next result, we characterize the converging Palais--Smale sequences of $\mathcal S_m$ using the known characterization of the converging Palais--Smale sequences of $S_e$.
\begin{lem}\label{l:PS}
Let $m>0$ and let $(x_k)\subset \Omega$ be a sequence such that 
\begin{equation}
\Vert \mathrm{d}_{x_k}\mathcal S_m\Vert\to 0.
\end{equation}
If there exist $0<a<b<d_g$ with
\begin{equation}\label{e:bounde}
a\leq \mathcal E(x_k)\leq b,\qquad \forall\,k\in\mathbb N,
\end{equation}
then, up to taking a subsequence, $x_k\to x_*\in\mathrm{Crit}\,\mathcal S_m$. 
\end{lem}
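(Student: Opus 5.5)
The plan is to transfer the statement to the free-period functional $S_e$, $e=\tfrac12m^2$, and use the known Palais--Smale property of $S_e$ on sequences whose periods stay in a compact subset of $(0,\infty)$. Set $T_k:=T_m(x_k)=\tfrac1m\mathcal E(x_k)$ and $q_k:=(x_k,T_k)$. By \eqref{e:bounde} we get
\[
\frac{a}{m}\leq T_k\leq\frac{b}{m},
\]
so the periods are bounded and bounded away from zero. By \eqref{e:graph} we have $\frac{\partial S_e}{\partial T}(q_k)=0$. By \eqref{e:eqdiff}, the $x$-component of $\mathrm{d}_{q_k}S_e$ equals $\mathrm{d}_{x_k}\mathcal S_m$, which tends to zero by assumption. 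Hence $\Vert\mathrm{d}_{q_k}S_e\Vert\to0$ in the product Hilbert structure of $\Omega\times(0,\infty)$.

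Next I would invoke the standard compactness result for $S_e$ (Contreras \cite{Contreras2006}, Abbondandolo \cite{Abbondandolo1}). It states that a sequence $(q_k)$ with $\mathrm{d}S_e(q_k)\to0$ and periods in $[T_-,T_+]\subset(0,\infty)$ has a converging subsequence in $H^1\times(0,\infty)$. That result is stated on the full loop space, while here $S_e$ is only defined locally via short capping disks. This causes no trouble, because the proof is local: it only uses the differential, which is single-valued and is given by the same formula on the whole free loop space.

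For safety I would sketch why the local version holds. The bound $\mathcal E(x_k)\leq b$ gives equi-H\"older continuity of $x_k$, and $Q$ is compact. So by Arzel\`a--Ascoli a subsequence converges uniformly to a continuous loop $x_*$, and we may also assume $T_k\to T_*\in[a/m,b/m]$. Working in finitely many charts (or via the exponential map along a smooth loop close to $x_*$), the standard argument then upgrades this to strong $H^1$ convergence. The argument tests $\mathrm{d}S_e(q_k)-\mathrm{d}S_e(q_j)$ on the difference $x_k-x_j$ and uses two facts: the kinetic term is uniformly convex in $\dot x$ with coefficient $1/T_k$, which is bounded below; and the magnetic term $\int\beta(\xi,\dot x)$ is compact, being lower order in $\xi$ and hence controlled by $\Vert\xi\Vert_{C^0}\to0$. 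This step, making the $H^1$-upgrade rigorous in the manifold setting, is the main technical point. I would handle it by citing \cite[Lemma 5.3]{Abbondandolo1}, or \cite{Contreras2006}, rather than redoing it.

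Finally, with $x_k\to x_*$ in $H^1$, we get $\mathcal E(x_*)=\lim\mathcal E(x_k)\in[a,b]\subset(0,d_g)$, so $x_*\in\Omega$. By continuity of $\mathrm{d}\mathcal S_m$ we get $\mathrm{d}_{x_*}\mathcal S_m=0$, that is, $x_*\in\mathrm{Crit}\,\mathcal S_m$. Lemma \ref{l:crit} then identifies $x_*$ with a periodic magnetic geodesic of speed $m$ and period $T_m(x_*)$.
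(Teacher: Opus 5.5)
Your proposal is correct and follows the paper's proof. You lift $x_k$ to $q_k=(x_k,\tfrac1m\mathcal E(x_k))$, observe that $\Vert\mathrm{d}_{q_k}S_e\Vert=\Vert\mathrm{d}_{x_k}\mathcal S_m\Vert\to0$ with periods in a compact subset of $(0,\infty)$, and invoke the Palais--Smale property of $S_e$ for bounded-period sequences; the paper cites this from \cite{AsselleBenedetti2016}, which covers non-exact $\beta$ directly, rather than sketching the local argument as you do. Your period bound $a/m\le T_k\le b/m$ is the correct one (the paper's displayed $ma\le T_k\le mb$ is a slip), and your check that $\mathcal E(x_*)\in[a,b]$ puts $x_*$ in $\Omega$ makes explicit a step the paper leaves implicit.
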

\begin{proof}
Consider the corresponding sequence $q_k=(x_k,T_k)$, where $T_k=\tfrac1m\mathcal E(x_k)$. By \eqref{e:graph}, \eqref{e:eqdiff} and \eqref{e:bounde}, we get
\begin{equation}
\Vert \mathrm{d}_{q_k}S_e\Vert=\Vert \mathrm{d}_{x_k}\mathcal S_m\Vert\to 0,\qquad ma\leq T_k\leq mb.
\end{equation}
By \cite{AsselleBenedetti2016}, up to a subsequence, $q_k\to q_*\in\mathrm{Crit}\,S_m$. Writing $q_*=(x_*,T_*)$, we also have $x_k\to x_*$.
\end{proof}

\subsection{Local Slice Deformations at Critical Points of High Index}
We will leverage the lower bounds on the index of critical points $x_*$ of $\mathcal S_\mu$ in terms of the period by constructing local deformations that keep the energy $\mathcal E$ fixed and decrease the action $S_\mu$ for one, and hence for every, value of $\mu>0$. These deformations will take place in the complement of submanifolds of codimension $\mathrm{ind}_\mu(x_*)-1$ following an idea contained in \cite[Lemma~4]{Assenza2024}, see also \cite{BahriTaimanov}. Hence, if the critical points have high index, then these submanifolds have high codimension, a property that will be crucial for us. For our purposes, we write down the result for $\mathrm{ind}_\mu(x_*)=3$.
\begin{lem}\label{l:local}
Let $\mu>0$ be a real number and let $x_*\in\mathrm{Crit}\,\mathcal S_\mu$ be such that $\mathrm{ind}_\mu(x_*)\geq 3$. Then, for every $d_0>0$, there exist 
\begin{enumerate}[(a)]
    \item real numbers $C_{x_*}>0$ and $\gamma_{x_*}>0$,
    \item neighborhoods $U_{x_*}\subset V_{x_*}\subset \Omega$ of $x_*$,
    \item a subset $\hat L_{x_*}
    \subset V_{x_*}$,
    \item a family of smooth embeddings $\sigma^t_{x_*}\colon\Omega\setminus \hat L_{x_*}\to\Omega$ for $t\in[0,1]$,
\end{enumerate}
with the following properties
\begin{enumerate}[(i)]
    \item all points in $V_{x_*}$ are at a distance at most $d_0$ from $x_*$;
    \item $\hat L_{x_*}$ is a submanifold of $V_{x_*}$ of codimension $2$;
        \item $\sigma^t_{x_*}(x)=x$ if $x\in\Omega\setminus V_{x_*}$;
    \item $\mathrm{dist}(\sigma^t_{x_*}(x),x)\leq C_{x_*}t$ for all $x\in \Omega\setminus \hat L_{x_*}$ and all $t\in[0,1]$;
    \item $\mathcal E(\sigma^t_{x_*}(x))=\mathcal E(x)$ for all $x\in \Omega\setminus \hat L_{x_*}$;
    \item $\mathcal S_m(\sigma^t_{x_*}(x))\leq \mathcal S_m(x)$ for all $m>0$ and all $x\in\Omega\setminus \hat L_{x_*}$;
    \item $\mathcal S_m(\sigma^t_{x_*}(x))\leq \mathcal S_m(x)-t^2\gamma_{x_*}$ for all $m>0$ and all $x\in U_{x_*}\setminus \hat L_{x_*}$.
\end{enumerate}
\end{lem}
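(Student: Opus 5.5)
The plan is to work in a chart of $\Omega$ near $x_*$ adapted to the level sets of $\mathcal E$, and to observe that on each such level set the functionals $\mathcal S_m$ differ from $-\mathcal B$ only by the constant $m\,\mathcal E$. Any deformation that preserves $\mathcal E$ and decreases $-\mathcal B$ therefore automatically gives (v), (vi) and (vii) for every $m>0$ simultaneously.

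\emph{Step 1: adapted chart.} By Lemma \ref{l:differentials}, $\mathrm{d}\mathcal E$ never vanishes on $\Omega$, so each level set of $\mathcal E$ is a smooth Hilbert hypersurface. By the implicit function theorem there is a chart
\[
\Phi\colon(c_*-\delta,c_*+\delta)\times B_H(0,\delta)\to\Omega,\qquad \Phi(c_*,0)=x_*,\qquad c_*:=\mathcal E(x_*),
\]
with $\mathcal E(\Phi(c,y))=c$, where $H$ is a Hilbert space. We may choose it with bi-Lipschitz constants close to $1$, so that its image lies within distance $d_0$ of $x_*$. Set $f_c(y):=-\mathcal B(\Phi(c,y))$; then
\[
\mathcal S_m(\Phi(c,y))=mc+f_c(y)\qquad\text{for every } m>0.
\]
Since $\mathrm{d}_{x_*}\mathcal S_\mu=0$, the point $y=0$ is critical for $f_{c_*}$. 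Moreover, the Hessian of $f_{c_*}$ at $0$ is the restriction of $\mathrm{d}^2_{x_*}\mathcal S_\mu$ to the codimension-one subspace $\ker\mathrm{d}_{x_*}\mathcal E$, because the first-order term vanishes at a critical point. Since $\mathrm{ind}_\mu(x_*)\geq3$, this restriction is negative definite on some subspace $W\subset H$ of dimension $2$. Split $H=W\oplus Z$ and write $y=(w,z)$.

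\emph{Step 2: uniform concavity and the singular set.} By continuity of second derivatives in $(c,y)$, after shrinking $\delta$ we get $\partial_w^2 f_c(w,z)\leq-\kappa$ on the chart for some $\kappa>0$. By the implicit function theorem applied to $\partial_w f_c(w,z)=0$, with initial solution $(c_*,0,0)$ and nondegenerate $\partial_w^2$, there is a smooth map $(c,z)\mapsto w_c(z)$ such that $w_c(z)$ is the unique maximizer of $w\mapsto f_c(w,z)$ on a small ball. After further shrinking, $w_c(z)$ lies well inside that ball for $(c,z)$ near $(c_*,0)$. Define
\[
\hat L_{x_*}:=\Phi\big(\{w=w_c(z)\}\big),
\]
which is a graph over the $(c,z)$-variables and hence a submanifold of codimension $\dim W=2$, giving (ii). Let $V_{x_*}$ be the image of a product neighborhood $\{|c-c_*|<\delta',\ |z|<\delta',\ |w-w_c(z)|<\delta'\}$, and $U_{x_*}$ a smaller such product.

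\emph{Step 3: radial push in $W$.} For $w\neq w_c(z)$, write $w-w_c(z)=r\theta$ with $r>0$ and $\theta$ a unit vector in $W$. Define $\sigma^t_{x_*}$ on the chart by
\[
(c,w,z)\mapsto\big(c,\;w_c(z)+(r+t\rho)\theta,\;z\big),
\]
extended by the identity outside $V_{x_*}$, giving (iii). Here $\rho=\rho(c,z,r)\geq0$ is a cutoff which equals a small constant $\rho_0$ on $U_{x_*}$ and vanishes near $\partial V_{x_*}$. The parameters are:
\begin{itemize}
\item $|\partial_r\rho|<1$, so that $r\mapsto r+t\rho$ is strictly increasing and each $\sigma^t$ is an embedding of the complement of $\hat L_{x_*}$;
\item $\rho_0$ small compared with $\delta'$, so that the pushed point stays in the concavity ball.
\end{itemize}
The coordinate $c$ is unchanged, which gives (v). The displacement is at most $t\rho_0$ in the chart, and with the Lipschitz constant of $\Phi$ this gives (iv).

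Along the ray $s\mapsto w_c(z)+s\theta$, the function $f_c$ is $\kappa$-concave with maximum at $s=0$. Hence moving from $s=r$ to $s=r+t\rho$ decreases $f_c$, which gives (vi), and the decrease is at least
\[
\tfrac{\kappa}{2}\big((r+t\rho)^2-r^2\big)\geq\tfrac{\kappa}{2}t^2\rho^2.
\]
On $U_{x_*}$ this yields (vii) with $\gamma_{x_*}=\kappa\rho_0^2/2$.

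The main obstacle is the bookkeeping of Step 3: the cutoff $\rho$ must vanish before the pushed points leave the region where the concavity estimate and the maximizer $w_c(z)$ are valid, while keeping the radial map injective. I would handle this by first fixing $\delta'$ inside the concavity region and then choosing $\rho_0\ll\delta'$, with a cutoff in $r$ whose slope is bounded by $\rho_0/\delta'<1$.
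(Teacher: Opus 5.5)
Your proposal is correct and follows essentially the same route as the paper's proof. Both arguments use a chart in which $\mathcal E$ is a coordinate, so that $\mathcal S_m=m\mathcal E-\mathcal B$ depends on $m$ only through the energy coordinate, together with a two-dimensional negative direction of the Hessian restricted to the level set. Both then apply the implicit function theorem to locate the fiberwise critical point: the paper recentres coordinates so it sits at $a=0$, while you use polar coordinates around the graph $w=w_c(z)$. Finally, both use a cut-off radial push that is injective because of a slope bound on the cut-off. Your integrated concavity bound $\tfrac{\kappa}{2}\big((r+t\rho)^2-r^2\big)$ is a slightly sharper form of the paper's Taylor estimate and plays the same role.
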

\begin{proof}
In what follows, $\mathcal{H}$ denotes a separable Hilbert space modeling the Hilbert manifold $\Omega$. We denote by $\Vert\cdot\Vert_*$ the norm on $\mathcal{H}$ and by $B_r^{\mathcal{H}}\subset\mathcal{H}$ the ball of radius $r>0$ centered at the origin. By \cite{Abbondandolo1}, for some $r>0$ there exists a bi-Lipschitz chart centered at $x_*$, denoted by
\begin{equation}
\Psi\colon V_{x_*}\to B_r^{\mathcal H},
\end{equation}
where on $V_{x_*}\subset \Omega$ we take the distance defined in \eqref{e:distance} and on $B_r^{\mathcal H}\subset\mathcal H$, we take the flat distance induced by the norm $\Vert\cdot\Vert_*$. Up to shrinking the chart, we can assume that property (i) holds.

Let $e_*:=\mathcal E(x_*)$. Since $d_{x_*}\mathcal E\neq0$ by Lemma \ref{l:differentials}, the set $\mathcal{E}^{-1}(e_*)$ is a smooth hypersurface passing through $x_*$. Hence, by the Implicit Function Theorem, we can assume that, for some $\varepsilon>0$, the chart $\Psi$ is of the type
\[
\Psi\colon V_{x_*}\to(-\varepsilon,\varepsilon)\times B_r^{\mathcal H}\subset \mathbb R\times\mathcal H,
\]
with coordinates $(b,y)$ centered at $x$, such that
\begin{equation}\label{e:lemmachart0}
    \mathcal{E}(b,y)=e_*+b.
\end{equation}

Since $\mathrm{ind}_\mu(x)\geq 3$ by assumption and $\mathcal H$ has codimension $1$ in the chart, there exists a subspace of $\mathcal{H}$ of dimension $2=3-1$ such that the quadratic form associated with $d^2_{x_*}\mathcal{S}_m$ is negative definite when restricted to it. We identify this subspace with $\mathbb R^2$ and by slightly abusing notation we replace $\mathcal H$ with $\mathbb R^2\times\mathcal H$. In this way, for some $\delta>0$ we may rewrite the chart as
\[
\Psi\colon V_{x_*}\to B_\delta^2\times(-\varepsilon,\varepsilon)\times B_r^\mathcal H\subset\mathbb R^2\times\mathbb R\times \mathcal H,
\]
with coordinates $(a,b,y)$. In these local coordinates, by~\eqref{e:lemmachart0} for every $m>0$ the action $\mathcal{S}_{m}$ takes the form
\begin{equation}\label{e:lemmachart1}
\mathcal{S}_{m}(a,b,y)
    =m(e_*+b)-\mathcal{B}(a,b,y).
\end{equation}
Consequently,
\begin{equation*}
    \frac{\partial \mathcal{S}_{m}}{\partial a}(a,b,y)=-\frac{\partial \mathcal B}{\partial a}(a,b,y)=\frac{\partial \mathcal{S}_\mu}{\partial a}(a,b,y),\qquad
\frac{\partial^2 \mathcal{S}_{m}}{\partial a^2}(a,b,y)=-\frac{\partial^2 \mathcal B}{\partial a^2}(a,b,y)=\frac{\partial^2 \mathcal{S}_{\mu}}{\partial a^2}(a,b,y).
\end{equation*}
Therefore, since $x_*$ is a critical point of $\mathcal S_\mu$ and has coordinates $(0,0,0)$, we have
\begin{equation}
\frac{\partial\mathcal B}{\partial a}(0,0,0)=0,\qquad -\frac{\partial^2\mathcal B}{\partial a^2}(0,0,0)[u,u]\leq -\eta|\alpha|^2,\quad\forall\,\alpha\in\mathbb R^2
\end{equation}
for some $\eta>0$. Since $\mathcal B$ is smoothly Frèchet differentiable, its second derivative is continuous. Thus, up to shrinking the neighborhood and $\eta$, we can assume that
\begin{equation}
\frac{\partial^2\mathcal B}{\partial a^2}(a,b,y)[\alpha,\alpha]\leq -\eta|\alpha|^2,\qquad\forall\,(a,b,y)\in B_\delta^2\times(-\varepsilon,\varepsilon)\times B_r^{\mathcal H},\ \ \forall\,\alpha\in\mathbb R^2.
\end{equation}

Consider the map
\begin{equation}
(a,b,y)\mapsto \frac{\partial\mathcal B}{\partial a}(a,b,y)\in\mathbb R^2.    
\end{equation}
By the Implicit Function Theorem, there exists a function $(b,y)\mapsto a(b,y)$ such that 
\begin{equation}
\frac{\partial\mathcal B}{\partial a}(a(b,y),b,y)=0,\qquad  a(0,0)=0.
\end{equation}
Hence, up to the bi-Lipschitz diffeomorphism $(a,b,y)\mapsto (a-a(b,y),b,y)$, we can assume that in the chart $\Psi$ we have
\begin{equation}\label{e:morse}
\frac{\partial\mathcal B}{\partial a}(0,b,y)=0,\qquad -\frac{\partial^2\mathcal B}{\partial a^2}(a,b,y)[\alpha,\alpha]\leq -\eta|\alpha|^2.
\end{equation}

We claim that for all $(a,b,y)\in B_\delta^2\times(-\varepsilon,\varepsilon)\times B_r^\mathcal H$ with $a\neq0$ and all $\lambda\in[0,\delta-|a|)$ we have
\begin{equation}\label{e:taylor}
-\mathcal B\Big(a+\lambda\frac{a}{|a|},b,y\Big)\leq -\mathcal B(a,b,y)-\frac12\eta\lambda^2.
\end{equation}
Indeed, observe that \eqref{e:morse} implies
\begin{equation}\label{e:lemmachart3}
-\frac{\partial\mathcal B}{\partial a}(a,b,y)[a]\leq0,
\qquad
\forall\,(a,b,y)\in B_\delta^2\times(-\varepsilon,\varepsilon)\times B_r^{\mathcal H}.
\end{equation}
Therefore, by Taylor's theorem, there exists $\lambda'\in(0,\delta-|a|)$ such that
\begin{equation}
\begin{aligned}
-\mathcal B\Big(a+\lambda\frac{a}{|a|},b,y\Big)&=-\mathcal B(a,b,y)-\frac{\partial\mathcal B}{\partial a}(a,b,y)\Bigg[\lambda\frac{a}{|a|}\Bigg]-\frac12\frac{\partial^2\mathcal B}{\partial a^2}\Big(a+\lambda'\frac{a}{|a|}\Big)\Bigg[\lambda\frac{a}{|a|},\lambda\frac{a}{|a|}\Bigg]\\
&\leq -\mathcal B(a,b,y)+0-\frac12\eta\lambda^2.
\end{aligned}
\end{equation}

We define
\begin{equation}
\hat L_{x_*}:=\Psi^{-1}\big(0\times(-\varepsilon,\varepsilon)\times B^{\mathcal H}_r\big),
\end{equation}
which readily satisfies property (ii) of the present lemma. Fix now
\begin{equation}
0<\delta''<\delta'<\delta,\qquad 0<\varepsilon''<\varepsilon'<\varepsilon,\qquad 0<r''<r'<r,
\end{equation}
and take a smooth bump function $\xi\colon (0,\delta)\times(-\varepsilon,\varepsilon)\times B^{\mathcal H}_r\to[0,1]$ such that
\begin{equation}\label{e:xi}
    \begin{aligned}
\xi(\rho,b,y)&=0,\qquad \forall\,(\rho,b,y)\notin (0,\delta')\times (-\varepsilon',\varepsilon')\times B^{\mathcal H}_{r'},\\ 
\xi(\rho,b,y)&=1,\qquad \forall\,(\rho,b,y)\in (0,\delta'')\times (-\varepsilon'',\varepsilon'')\times B^{\mathcal H}_{r''}.
    \end{aligned}
\end{equation}
We let
\begin{equation}\label{e:ustar}
U_{x_*}:=\Psi^{-1}\big(B_{\delta''}^2\times (-\varepsilon'',\varepsilon'')\times B^{\mathcal H}_{r''}\big).
\end{equation}
For every $u\in[0,\delta-\delta']$, we define
\begin{equation}
\begin{aligned}
\hat\sigma^{u}_{x_*}&\colon \big(B_{\delta}^2\setminus0\big)\times (-\varepsilon,\varepsilon)\times B^{\mathcal H}_r\to B_{\delta}^2\times (-\varepsilon,\varepsilon)\times B^{\mathcal H}_r,\\
\hat\sigma^{u}_{x_*}&(a,b,y):=\Big(a+u\xi(|a|,b,y)\frac{a}{|a|},b,y\Big).    \end{aligned}
\end{equation}
In other words, $\hat\sigma^u_{x_*}$ pushes radially out by a distance at most $u$ in the first coordinate and it leaves the second and third coordinate unchanged. We notice that $\hat\sigma^u_{x_*}$ is the identity outside $B_{\delta'}^2\times (-\varepsilon',\varepsilon')\times B^{\mathcal H}_{r'}$ and that its image is contained in $B_{\delta}^2\times (-\varepsilon,\varepsilon)\times B^{\mathcal H}_r$ since $u\in[0,\delta-\delta']$. 
If we let $(\rho,\theta)$ be the polar coordinates on $B^2_\delta\setminus 0$, then we have the expression
\begin{equation}
\hat\sigma^{u}_{x_*}(\rho,\theta,b,y)=(\rho+u\xi(\rho,b,y),\theta,b,y).
\end{equation}
If $u_0\in[0,\delta-\delta']$ is such that
\begin{equation}
0<u_0\left\Vert\frac{\partial\xi}{\partial\rho}\right\Vert_\infty<1,
\end{equation}
then $\hat\sigma^{u}_{x_*}$ is an embedding for all $u\in[0,u_0]$.

Therefore, we define for every $t\in[0,1]$
\begin{equation}\label{e:sigmat}
\sigma^t_{x_*}\colon \Omega\setminus\hat L_{x_*}\to\Omega,\qquad \sigma^t_{x_*}(x):=\begin{cases}
\Psi^{-1}\circ\hat\sigma^{t u_0}_{x_*}\circ\Psi(x),&\text{if }x\in V_{x_*},\\
x,&\text{if }x\notin V_{x_*}.
\end{cases}
\end{equation}
This is a family of smooth embeddings satisfying property (iii) of the present lemma, as follows from \eqref{e:xi}. The distance with respect to the norm $\Vert\cdot\Vert_*$ between the points $\sigma^t_{x_*}(x)$ and $x$ is
\begin{equation}
\big\Vert\hat\sigma^{t u_0}_{x_*}(a,b,y)-(a,b,y)\Vert_*=tu_0\xi(|a|,b,y)\leq tu_0.
\end{equation}
Since the chart $\Psi$ is bi-Lipschitz, formula \eqref{e:sigmat} implies that there exists a constant $C_{x_*}>0$ satisfying property (iv) of the present lemma. Since $\hat\sigma^{tu_0}_{x_*}$ preserves the coordinate $b$, we deduce that $\sigma^t_{x_*}$ preserves the function $\mathcal E$, see \eqref{e:lemmachart0}. This shows property (v) of the present lemma. From this fact, it also follows that it is enough to show properties (vi) and (vii) for $\hat\sigma^{tu_0}_{x_*}$ and the function $-\mathcal B$ instead of showing them for $\sigma^t_{x_*}$ and the function $\mathcal S_m$, see \eqref{e:lemmachart1}. From \eqref{e:taylor}, we get
\begin{equation}
\begin{aligned}
-\mathcal B\Big(\hat\sigma^{tu_0}_{x_*}(a,b,y)\Big)\leq -\mathcal B(a,b,y)-\frac12\eta\big(t u_0 \xi(a,b,y)\big)^2.    
\end{aligned}
\end{equation}
This shows property (vi). Since $\xi(a,b,y)=1$ if $(a,b,y)\in \Psi(U_{x_*})$, see \eqref{e:ustar}, we also deduce property (vii) with $\gamma_{x_*}:=\tfrac12 \eta u_0^2$.
\end{proof}

\section{The Minimax Argument}\label{s:The Minimax Geometry}
The main ingredient to prove Theorem \ref{t:C} will be the following result.
\begin{mainthm}\label{t:D}
There exists a constant $D>0$ such that for every $\varepsilon>0$ there exists $m_\varepsilon>0$ with the following property: for every $m\in(0,m_\varepsilon]$ there exists a critical point $x_m$ of $\mathcal S_m$ with
\begin{equation}
\mathcal E(x)\leq Dm,\qquad \mathcal S_m(x)\leq \frac{\pi m^2}{\Vert\beta\Vert_\infty}(1+\varepsilon).
\end{equation}
\end{mainthm}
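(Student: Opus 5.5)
The plan follows the sketch in the introduction: a mountain-pass minimax for $\mathcal S_m$ on the short-loop space $\Omega$, with deformations that combine a two-Lyapunov-function pseudogradient and the slice deformations of Lemma~\ref{l:local}.

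\emph{Step 1: geometry and upper bound.} For $m$ small, fix $q_*$ with $|\beta_{q_*}|=\Vert\beta\Vert_\infty$. Take the explicit path $r\mapsto x_{m,r}$, $r\in[0,1]$, obtained by transplanting via $\exp_{q_*}$ the circles of radius $r\cdot m/\Vert\beta\Vert_\infty$ in the $(u_1,v_1)$-plane of $B_{q_*}$ (cf.\ \eqref{e:qt}), suitably continued until the action becomes $\le 0$. A direct Taylor expansion, using Lemma~\ref{l:boundslinear} for the linear model, gives
\[
\mathcal E(x_{m,r})=O(m),\qquad \max_r\mathcal S_m(x_{m,r})\le \frac{\pi m^2}{\Vert\beta\Vert_\infty}+o(m^2).
\]
Let $\Gamma_m$ be the class of paths in $\Omega$ starting from loops with $\mathcal E$ small (near constants, where $\mathcal S_m>0$ is small but there is a positive barrier: on $\{\mathcal E=\delta m\}$ we have $\mathcal S_m\ge m\delta m-\Vert\beta\Vert_\infty C\delta^2m^2>0$ by the isoperimetric inequality for the flux) and ending where $\mathcal S_m\le0$. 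Define $\mathfrak c(m)=\inf_{\gamma\in\Gamma_m}\max\mathcal S_m\circ\gamma$. Then $\mathfrak c(m)\ge c_0m^2>0$ and $\mathfrak c(m)\le \pi m^2\Vert\beta\Vert_\infty^{-1}(1+\varepsilon)$ for $m\le m_\varepsilon$.

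\emph{Step 2: confinement by two Lyapunov functions.} Set $m'=2m$ and take $T_0$, $m_0$ from Lemma~\ref{l:index}, and $T_1>T_0$. Since $\mathcal S_{2m}-\mathcal S_m=m\mathcal E$, paths staying in $\{\mathcal S_m\ge0,\ \mathcal S_{2m}<\tfrac13D\cdot 3m^2\}$ satisfy $\mathcal E<Dm$. Choose $D$ (independent of $\varepsilon$) so that $\mathcal E\le 2T_1m$ forces $\mathcal S_{2m}\le \tfrac12Dm^2$, and so that the initial paths from Step~1 lie in this set. Build a pseudogradient $W_m$ that equals $-\nabla\mathcal S_m$ where $\mathcal E\le 2T_0m$ and interpolates towards $-\nabla\mathcal S_{2m}$ (via cutoffs in $\mathcal E$) so that $\mathrm d\mathcal S_m[W_m]\le0$ everywhere and $\mathrm d\mathcal S_{2m}[W_m]\le0$ where $\mathcal E\ge 2T_1m$. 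Then $\mathcal S_{2m}$ cannot cross $Dm^2$ along flow lines, so the flow preserves $\{\mathcal E<Dm\}\subset\Omega$ (since $Dm<d_g$) and is forward complete there. Zeros of $W_m$ are critical points of $\mathcal S_\mu$ with $\mu=(1-\lambda)m+\lambda 2m\in[m,2m]$, because $\mathcal S_\mu$ is affine in $\mu$.

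\emph{Step 3: removing spurious critical points.} Suppose $\mathcal S_m$ has no critical point at level $\mathfrak c(m)$ with $\mathcal E\le Dm$. Palais--Smale sequences for the $W_m$ flow have $\mathcal E\in[a,Dm]$, with a lower bound $a>0$ coming from the barrier of Step~1. By Lemma~\ref{l:PS} they converge to zeros of $W_m$. Any zero with $\mu\ne m$ lies in $\{\mathcal E\ge 2T_0m\}$, so $\mathcal E\ge T_0\mu$ and, by Lemma~\ref{l:index}, its index is $\ge3$. The set of such critical points near level $\mathfrak c(m)$ is compact by Lemma~\ref{l:PS}, so we cover it by finitely many $U_{x_*}$. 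By transversality, a generic path in $\Gamma_m$ avoids the codimension-$2$ sets $\hat L_{x_*}$. Applying $\sigma^t_{x_*}$ strictly lowers $\mathcal S_m$ and $\mathcal S_{2m}$ near these points (Lemma~\ref{l:local}(vi),(vii)) while keeping $\mathcal E$ fixed, so the constraints of Step~2 survive. Alternating these slice deformations with the flow of $W_m$ yields a path in $\Gamma_m$ with maximum below $\mathfrak c(m)$, a contradiction. This gives $x_m\in\mathrm{Crit}\,\mathcal S_m$ with $\mathcal S_m(x_m)=\mathfrak c(m)$ and $\mathcal E(x_m)\le Dm$.

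The main obstacle is Step~3. One must control uniformly, within the relevant action window, both the deformation-lemma estimates and the interplay between the $\mathcal S_{2m}$-monotonicity region and the slice deformations, and verify that the endpoints of the paths remain in the admissible sets throughout.
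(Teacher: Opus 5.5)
\textbf{Verdict: the outline is right, but the key step is asserted without a construction.} Your plan has the same architecture as the paper's proof: restricted action on short loops, small circles at a point of maximal strength for the upper bound, $D$ fixed via $T_1>\max\{T_0,2\pi/\Vert\beta\Vert_\infty\}$, $\mathcal S_m$ and $\mathcal S_{2m}$ as two Lyapunov functions, index $\ge 3$ from Lemma~\ref{l:index}, codimension-two slice deformations, and transversality for paths. The gap is in Steps~2--3, at the point that carries the content.

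\textbf{What you assert, and why it fails as indicated.} You assert a field $W_m$ with three properties: $\mathrm d\mathcal S_m[W_m]\le0$ everywhere; $\mathrm d\mathcal S_{2m}[W_m]\le0$ on $\{\mathcal E\ge 2T_1m\}$; and, implicitly, a definite decrease of $\mathcal S_m$ near level $\mathfrak c(m)$ away from bad critical points. You give no mechanism, and the one you indicate does not work.
\begin{itemize}
\item Take $W_m=-\nabla\mathcal S_{\mu(x)}$ with $\mu(x)\in[m,2m]$ set by cutoffs in $\mathcal E$; this is what makes its zeros critical points of $\mathcal S_\mu$. Then $\mathrm d\mathcal S_m[W_m]=-\nabla\mathcal S_m\cdot\nabla\mathcal S_{\mu(x)}$ is positive whenever the two gradients form a sufficiently obtuse angle. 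So your flow may raise the maximum of $\mathcal S_m$ on a path.
\item Even with an angle bound, a plain convex combination can fail, because $\Vert\nabla\mathcal S_{2m}\Vert$ may be much larger than $\Vert\nabla\mathcal S_m\Vert$.
\item Near a critical point of $\mathcal S_\mu$ with $\mu\in(m,2m)$ lying in $\{\mathcal E\ge 2T_1m\}$, the two gradients are almost anti-parallel. There no bounded field can lower $\mathcal S_m$ at a definite rate without raising $\mathcal S_{2m}$.
\end{itemize}
Your sentence ``Palais--Smale sequences for the $W_m$ flow converge to zeros of $W_m$'' is exactly the unproved claim.

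\textbf{What is needed, and what the paper does.}
\begin{enumerate}
\item[(a)] \emph{Angle bound.} Under $K_m=\varnothing$, one has $\nabla\mathcal S_m\cdot\nabla\mathcal S_{2m}>-(1-\eta_m)\Vert\nabla\mathcal S_m\Vert\Vert\nabla\mathcal S_{2m}\Vert$ on the set where $|\mathcal S_m-\mathfrak c(m)|<\eta_m$ and $\mathcal E>2T_0m$, outside a neighbourhood $U_m''$ of $L_m$ (Lemma~\ref{l:PS2}). Here $L_m$ is the set of critical points of $\mathcal S_\mu$, $\mu\in[m,2m]$, at level $\mathfrak c(m)$ with $\mathcal E\ge 2T_0m$. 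The bound follows from Lemma~\ref{l:PS} together with Lemma~\ref{l:CS}, which converts near anti-parallelism into smallness of $\nabla\mathcal S_\mu=(1-\lambda)\nabla\mathcal S_m+\lambda\nabla\mathcal S_{2m}$.
\item[(b)] \emph{Normalized field.} Set $W_m=-\rho\big(\nabla\mathcal S_m+\chi\tfrac{\Vert\nabla\mathcal S_m\Vert}{\Vert\nabla\mathcal S_{2m}\Vert}\nabla\mathcal S_{2m}\big)$, with $\chi>1-\eta_m$ on $\{\mathcal E\ge 2T_1m\}$ and $\rho$ vanishing outside the action window and on $\bar U_m''$. Then $\mathrm d\mathcal S_m[-W_m]\ge\rho\,(1-(1-\eta_m)\chi)\Vert\nabla\mathcal S_m\Vert^2$ and $\mathrm d\mathcal S_{2m}[-W_m]>\rho\,(\chi-(1-\eta_m))\Vert\nabla\mathcal S_m\Vert\Vert\nabla\mathcal S_{2m}\Vert$.
\item[(c)] \emph{Combining flow and slices.} Since $W_m\equiv0$ near $L_m$, one flows only for a short time $t_1$ chosen so that $\Phi_t(U_m')\subset U_m$. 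One then applies a single composed slice map $\hat\sigma_m$, built with iterated exceptional sets and a small $t_0$ so that earlier slices do not push points out of later $U_{x_j}$. The codimension-two set a generic path must avoid is therefore $\varphi_m^{-1}(\hat L_m)$, not the $\hat L_{x_*}$.
\end{enumerate}

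\textbf{Minimax class.} The class must be defined from the outset inside $\{0\le\mathcal S_m\le\mathcal S_{2m}<Dm^2,\ \mathcal E\ge T_*m\}$, not in $\Omega$. Otherwise near-optimal paths leave the region where the flow and Lemma~\ref{l:PS} are available.
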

In this and the next section, we prove Theorem \ref{t:D} using a minimax argument. In Section \ref{s:local}, we show how Theorem \ref{t:D} implies Theorem \ref{t:C}.
\subsection{Action Estimates}
In this subsection, we present three estimates on the action $\mathcal S_m$. To this purpose, we recall that by the compactness of $Q$, there exists a constant $A'>0$ such that for every $q_*\in Q$ and normal chart $U\to\mathbb R^n$ with radius $d_g/2$ around $q_*$, we have for the norm induced by $g$ at different points
\begin{equation}\label{e:equivalence}
|u|_{q_1}\leq A'|u|_{q_2},\qquad \forall\,q_1,q_2\in U,\quad \forall\,u\in\mathbb R^n.
\end{equation}
\begin{lem}\label{l:upperlower}
There exists a constant $A>0$ such that for all $m>0$
\begin{equation}
m\mathcal E(x)-\tfrac12A\mathcal E(x)^2\leq \mathcal S_m(x)\leq m\mathcal E(x)+\tfrac12A\mathcal E(x)^2,\qquad\forall\,x\in\Omega.
\end{equation}
\end{lem}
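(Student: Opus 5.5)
Since $\mathcal S_m(x)=m\mathcal E(x)-\mathcal B(x)$, both inequalities follow at once from the single flux bound
\begin{equation*}
|\mathcal B(x)|\leq \tfrac12A\,\mathcal E(x)^2,\qquad\forall\,x\in\Omega,
\end{equation*}
with $A$ independent of $m$. So the whole proof is an isoperimetric-type estimate: the flux of $\beta$ through the canonical small capping disk of a short loop is controlled by the square of its length, and hence by $\mathcal E(x)^2$.

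To prove it, fix $x\in\Omega$ and set $q_*:=x(0)$. Since $\ell(x)\leq\mathcal E(x)<d_g$ and $d_g$ is below twice the injectivity radius, $x$ lies in the normal chart $U\to\mathbb R^n$ of radius $d_g/2$ centred at $q_*$. I take the capping disk $\hat x$ to be the cone (radial contraction) to the origin in this chart, $\hat x(r,s)=r\,x(s)$; this disk stays in the injectivity ball, so it is an admissible capping disk in the definition of $\mathcal B$. Writing $\beta=\sum\beta_{ij}\,\mathrm dy^i\wedge\mathrm dy^j$ in the chart coordinates, I get
\begin{equation*}
\mathcal B(x)=\int_0^1\!\!\int_{\mathbb R/\mathbb Z}\beta_{r x(s)}\big(x(s),\,r\dot x(s)\big)\,\mathrm ds\,\mathrm dr .
\end{equation*}
The Euclidean operator norm of $\beta$ on $U$ is bounded uniformly in $q_*$ by a constant $C_\beta$: this uses $\Vert\beta\Vert_\infty$, the compactness of $Q$, and the norm equivalence \eqref{e:equivalence}, which controls the Euclidean norm against the $g$-norm at any point of $U$ by the constant $A'$. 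Hence $|\mathcal B(x)|\leq \tfrac12 C_\beta\int|x(s)|_{\mathrm{eucl}}\,|\dot x(s)|_{\mathrm{eucl}}\,\mathrm ds$.

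Next, $|x(s)|_{\mathrm{eucl}}\leq A'\,\ell(x)\leq A'\mathcal E(x)$, because the Euclidean distance from the origin is at most the Euclidean length of the arc from $x(0)=0$, which is comparable to its $g$-length. Moreover $\int|\dot x|_{\mathrm{eucl}}\leq A'\int|\dot x|_g=A'\ell(x)\leq A'\mathcal E(x)$, by Cauchy--Schwarz on $\mathbb R/\mathbb Z$. Combining these gives $|\mathcal B(x)|\leq \tfrac12 C_\beta A'^2\,\mathcal E(x)^2$, so $A:=C_\beta A'^2$ works.

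The only point needing care is the uniformity of the constants over all base points $q_*$. This is exactly what \eqref{e:equivalence} and the compactness of $Q$ provide, so no real obstacle is expected. One should also note that the value of $\mathcal B$ does not depend on the choice of capping disk within the injectivity ball, so using the cone disk is legitimate.
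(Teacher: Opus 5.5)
Your proposal is correct and is essentially the paper's proof. Both reduce the statement to $|\mathcal B(x)|\le\frac12A\mathcal E(x)^2$, cap $x$ by the cone $\hat x(r,s)=rx(s)$ in a normal chart centred at $x(0)$, and bound the integrand using $\Vert\beta\Vert_\infty$, the norm equivalence \eqref{e:equivalence}, and $\ell(x)\le\mathcal E(x)$.

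The only difference is bookkeeping. The paper stays in $g$-norms and uses that normal coordinates are a radial isometry to get $|x(s)|_{rx(s)}\le\frac12\ell(x)$ directly; this yields $A=\frac12\Vert\beta\Vert_\infty A'$. You convert everything to Euclidean norms and pick up extra factors of $A'$, which is harmless.
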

\begin{proof}
By the definition of $\mathcal S_m$, it is enough to show that
\begin{equation}
|\mathcal B(x)|\leq \tfrac12A\mathcal E(x)^2,\qquad\forall\,x\in\Omega.    
\end{equation}
If $x\in\Omega$, let $q_*:=x(0)$ and consider the capping disk $\hat x\colon \mathbb D\to U$ for $x$ given by the coordinate expression in a normal chart 
\begin{equation}
\hat x(r,s)=rx(s),\qquad \forall\,(r,s)\in[0,1]\times\mathbb R/\mathbb Z.
\end{equation}
Thus,
\begin{equation}
|x(s)|_{x(s)}\leq \tfrac12\ell(x)\leq \tfrac12\mathcal E(x),\qquad \forall\,s\in\mathbb R/\mathbb Z.
\end{equation}
We obtain 
\begin{equation}
\begin{aligned}
|\mathcal B(x)|=\left|\int_0^1\int_{\mathbb R/\mathbb Z}\beta_{rx(s)}(x(s),r\dot x(s))\mathrm{d}r\mathrm{d}s\right|&\leq \Vert\beta\Vert_\infty\int_0^1\int_{\mathbb R/\mathbb Z}|x(s)|_{rx(s)}|r\dot x(s)|_{rx(s)}\mathrm{d}r\mathrm{d}s\\
&\leq\Vert\beta\Vert_\infty\int_0^1r\int_{\mathbb R/\mathbb Z}|x(s)|_{x(s)}A'|\dot x(s)|_{x(s)}\mathrm{d}r\mathrm{d}s\\
&\leq \Vert\beta\Vert_\infty\tfrac14\mathcal E(x)A'\mathcal E(x),
\end{aligned}
\end{equation}
where in the second inequality we used that normal coordinates are a radial isometry and that \eqref{e:equivalence} holds. Thus, we let $A:=\tfrac12\Vert\beta\Vert_\infty A'$.
\end{proof}
For the two-Lyapunov-function argument in Section \ref{s:Deforming the Paths in the Minimax Family}, we will need estimates for both $\mathcal S_m$ and $\mathcal S_{2m}$. We now deal with this second function.
\begin{lem}\label{l:D}
Let $m_0$ and $T_0$ be given in Lemma \ref{l:index} and fix any real number $T_1$ such that
\begin{equation}\label{e:T1}
T_1>\max\big\{T_0,\frac{2\pi}{\Vert\beta\Vert_\infty}\big\}.
\end{equation}
There exist $m_1\in(0,m_0]$ and $D>0$ such that
\begin{enumerate}[(i)]
    \item for all $m\in(0,m_1]$ and all $x\in\Omega$ we have
\begin{equation}
\mathcal E(x)\leq 2T_1m\quad\Longrightarrow\quad \mathcal S_{2m}(x)\leq \tfrac12Dm^2;
\end{equation}
\item $Dm_1<d_g$.
\end{enumerate}
\end{lem}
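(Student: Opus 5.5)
The plan is to use the upper bound of Lemma \ref{l:upperlower} applied at speed $2m$. For every $x\in\Omega$ and every $\mu>0$ that lemma gives
\begin{equation*}
\mathcal S_{\mu}(x)\leq \mu\mathcal E(x)+\tfrac12A\mathcal E(x)^2 .
\end{equation*}
We take $\mu=2m$ and assume $\mathcal E(x)\leq 2T_1m$. Both terms on the right are then quadratic in $m$:
\begin{equation*}
\mathcal S_{2m}(x)\leq 2m\cdot 2T_1m+\tfrac12A(2T_1m)^2=(4T_1+2AT_1^2)\,m^2 .
\end{equation*}
Accordingly we define
\begin{equation*}
D:=2\big(4T_1+2AT_1^2\big)=8T_1+4AT_1^2 ,
\end{equation*}
and item (i) then holds for every $m>0$, with no restriction on $m$. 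The constant $D$ depends only on $T_1$ and on $A$, hence only on $g$ and $\beta$ through the choice of $T_1$ in \eqref{e:T1}.

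One point needs checking: $x$ must lie in $\Omega$, which requires $\mathcal E(x)<d_g$. This is part of the hypothesis $x\in\Omega$, so nothing more is needed for (i).

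For item (ii), $D$ is now fixed and independent of $m_1$, so we choose
\begin{equation*}
m_1:=\min\Big\{m_0,\ \tfrac{d_g}{2D}\Big\},
\end{equation*}
which gives $m_1\in(0,m_0]$ and $Dm_1\leq d_g/2<d_g$. We may shrink $m_1$ further if later arguments require it, for instance to ensure $2T_1m_1<d_g$, so that the hypothesis set in (i) is nonempty and well inside $\Omega$. This is harmless, since (i) holds for all $m$.

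I do not expect a real obstacle. The only subtlety is the order of choices: $D$ must be chosen before $m_1$, since it is independent of $m$, and $m_1$ then depends on $D$. Making $D$ depend on $m_1$ would make the argument circular.
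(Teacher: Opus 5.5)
Your proof is correct and is essentially the paper's argument: you apply the upper bound of Lemma~\ref{l:upperlower} at speed $2m$, implicitly using that $r\mapsto 2mr+\tfrac12Ar^2$ is increasing for $r\geq0$. You also choose exactly the same constants, $D=8T_1+4AT_1^2$ and $m_1=\min\{m_0,\tfrac{d_g}{2D}\}$.
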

\begin{proof}
For all $m>0$, by Lemma \ref{l:upperlower}
we have
\begin{equation}
\mathcal E(x)\leq 2T_1m\quad\Longrightarrow\quad \mathcal S_{2m}(x)\leq 2m(2T_1m)+\tfrac12A(2T_1m)^2.
\end{equation}
Therefore, we can take
\begin{equation}
D:=8T_1+4AT_1^2,\qquad m_1:=\min\Big\{m_0,\frac{d_g}{2D}\Big\}. 
\end{equation} \end{proof}

In addition to the estimates above that hold for a general $x\in\Omega$, we need a special estimate for a path of loops bifurcating from the maximum set of the magnetic strength.
\begin{lem}\label{l:special}
There exist $r_0>0$ and a continuous path $x\colon (0,r_0]\to \Omega$ such that
    \begin{equation}
        (i)\quad \mathcal E(x(r))=r,\ \ \forall\,r\in(0,r_0],\qquad (ii)\quad \mathcal B(x(r))=\frac{1}{4\pi}\Vert\beta\Vert_\infty r^2+o(r^2),\ \ \text{as }r\to0.
    \end{equation}
\end{lem}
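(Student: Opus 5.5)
Pick a point $q_*\in Q$ where the magnetic strength attains its maximum, $|\beta_{q_*}|=\Vert\beta\Vert_\infty=b_1$, and use the spectral decomposition of $B_{q_*}$ from Section \ref{s:linear}. Let $u_1,v_1$ be the orthonormal vectors with $B_{q_*}u_1=b_1v_1$ and $B_{q_*}v_1=-b_1u_1$. Work in a normal chart $\exp_{q_*}$ of radius $d_g/2$ centered at $q_*$. For small $\rho>0$ we take the round circle
\[
y_\rho(s):=\exp_{q_*}\big(\rho(\cos(2\pi s)u_1+\sin(2\pi s)v_1)\big),\qquad s\in\mathbb R/\mathbb Z,
\]
traversed in the orientation for which the flux of $\beta_{q_*}$ through the flat disk is positive. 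Then $\rho\mapsto y_\rho$ is continuous, and in fact smooth, into $\Omega$ for $\rho$ small. Since normal coordinates are a radial isometry and $g_{ij}(p)=\delta_{ij}+O(|p|^2)$, we get
\[
\mathcal E(y_\rho)=2\pi\rho\,(1+O(\rho^2)),
\]
and $\tfrac{d}{d\rho}\mathcal E(y_\rho)=2\pi+O(\rho^2)>0$. Hence $\rho\mapsto\mathcal E(y_\rho)$ is a homeomorphism from some $(0,\rho_0]$ onto $(0,r_0]$, with inverse $\rho(r)=\tfrac{r}{2\pi}(1+O(r^2))$. Define $x(r):=y_{\rho(r)}$. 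This gives property (i) by construction, together with continuity of the path.

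For (ii), compute $\mathcal B(y_\rho)$ using the capping disk $\hat y_\rho(t,s)=t\,y_\rho(s)$ in the chart, as in the proof of Lemma \ref{l:upperlower}. In coordinates $\beta_p=\beta_{q_*}+O(|p|)$, and the disk has area $\pi\rho^2$, so
\[
\mathcal B(y_\rho)=\int_{\text{disk}}\beta_{q_*}+O(\rho^3)=b_1\pi\rho^2+O(\rho^3).
\]
Here we use that $\beta_{q_*}$ restricted to the plane spanned by $u_1,v_1$ is $b_1\,\mathrm{d}x_1\wedge \mathrm{d}y_1$, exactly as in item (ii) of the proof of Lemma \ref{l:boundslinear}. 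Substituting $\rho=\tfrac{r}{2\pi}(1+O(r^2))$ gives
\[
\mathcal B(x(r))=b_1\pi\frac{r^2}{4\pi^2}+O(r^3)=\frac{1}{4\pi}\Vert\beta\Vert_\infty r^2+o(r^2).
\]
We also need to check that the flux through the chosen disk agrees with $\mathcal B$ as defined. It does, because the disk lies in the injectivity ball centered at $x(r)(0)$, so the result is independent of the cap.

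The only mildly delicate step is the bookkeeping of the error terms. The metric error is quadratic, which affects $\mathcal E$ only at relative order $\rho^2$. The form error is linear in $|p|$ on a region of area $O(\rho^2)$, which gives $O(\rho^3)$. Both are $o(r^2)$ after the reparametrization, so no genuine obstacle is expected. We also need the orientation sign chosen so that the leading flux term is $+b_1\pi\rho^2$ rather than its negative; reversing the parametrization of the circle if necessary fixes this.
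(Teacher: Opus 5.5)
Your proposal is correct and follows essentially the same route as the paper. Both take a round circle of radius $\rho$ in normal coordinates centered at a maximum point $q_*$, lying in the eigenplane of $B_{q_*}$ for the top eigenvalue, invert $\rho\mapsto\mathcal E$ (with derivative $2\pi$ at $0$) to reparametrize by $r$, and compute the flux $\Vert\beta\Vert_\infty\pi\rho^2+o(\rho^2)$ through the radial cap; your sharper $O(\rho^3)$ error bounds and your remarks on orientation and on the cap lying in the injectivity ball fill in details the paper leaves implicit.
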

\begin{proof}
Let $q_*\in Q$ be such that $|\beta_{q_*}|=\Vert\beta\Vert_\infty$. By Section \ref{s:linear}, we recall that there is an orthogonal splitting $T_{q_*}Q\cong \mathbb R^2\times \mathbb R^{n-2}$ such that \begin{equation}
g_{q_*}|_{\mathbb R^2\times 0}=\mathrm{d}x^2+\mathrm{d}y^2,\qquad \beta_{q_*}|_{\mathbb R^2\times 0}=\Vert\beta\Vert_\infty \mathrm{d}x\wedge \mathrm{d}y.
\end{equation} 
Let $U\to \mathbb R^2\times \mathbb R^{n-2}$ be normal coordinates centered at $q_*$ that respect the splitting. Using the normal chart $U$, we define \begin{equation}
x\colon(0,\tfrac12d_g)\to \Omega,\qquad x(d)(s):=de^{2\pi is},\qquad \forall\,d\in(0,\tfrac12d_g),\ \forall\,s\in\mathbb R/\mathbb Z.
\end{equation}
We compute for $d\to 0$
\begin{equation}\label{e:BB}
\mathcal B(x(d))=\Vert\beta\Vert_\infty\pi d^2 +o(d^2)
\end{equation}
and
\begin{equation}
\mathcal E(x(d))=2\pi d\Big(\int_{\mathbb R/\mathbb Z}|e^{2\pi i s}|^2_{x(d)(s)}\mathrm{d}s\Big)^\frac{1}{2}.
\end{equation}
The function $d\mapsto\mathcal E(x(d))$ extends to $0$ for $d=0$, it is differentiable, and $\mathcal E(x(0))'=2\pi>0$. Therefore, it admits an inverse 
\begin{equation}\label{e:dr}
r\mapsto d(r)=\frac{1}{2\pi}r+o(r).
\end{equation}
Reparameterizing $x(r):=x(d(r))$, we get 
\begin{equation}
\mathcal E(x(r))=\mathcal E(x(d(r))=d^{-1}(d(r))=r.
\end{equation}
This gives us property (i). Property (ii) follows by substituting \eqref{e:dr} into \eqref{e:BB}.
\end{proof}

\subsection{The Minimax Class}
In this subsection we will define the minimax class of paths, the corresponding minimax value, and give bounds for this minimax value that will be used in Section \ref{s:local}.
\begin{dfn}\label{d:T}
Fix any triple of real numbers $0<T_*<T_*'<T_*''$ such that
\begin{equation}
T_*m_1\leq\frac{4\pi}{\Vert\beta\Vert_\infty},\quad \delta:=\tau''-\tau'>0,\quad \text{where}\quad\tau':=T_*'+\tfrac12A(T_*')^2,\quad \tau'':=T_*''-\tfrac12A(T_*'')^2.
\end{equation}
For every $m\in(0,m_1]$, define the set
\begin{equation}
\Omega_m:=\left\{\ x\in\Omega\ \Big| \ 0\leq \mathcal S_m(x)\leq \mathcal S_{2m}(x)< Dm^2,\ T_*m\leq \mathcal E(x)\ \right\}.
\end{equation}
\end{dfn}
Having a control from below on the function $\mathcal S_m$ and from above on the function $\mathcal S_{2m}$ in $\Omega_m$, we can bound the energy $\mathcal E$ on the closure of $\Omega_m$.
\begin{lem}\label{l:inclusion}
For every $m\in(0,m_1]$, denoting by $\bar\Omega_m$ the closure of $\Omega_m$ in $\Omega$, we have the inclusion
\[
\bar\Omega_m\subset\Big\{\mathcal E(x)\leq Dm\Big\}.
\]
Moreover, 
\begin{equation}\label{e:partial}
\Omega_m\cap\partial\Omega_m\subset\Big\{T_*m=\mathcal E(x)\Big\}\cup\Big\{\mathcal S_m(x)=0\Big\}.
\end{equation}
\end{lem}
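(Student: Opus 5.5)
The first inclusion follows from the linearity of $\mathcal S_\mu$ in $\mu$, namely identity \eqref{e:Lip2} with $m'=2m$. For every $x\in\Omega$ we have
\[
\mathcal S_{2m}(x)-\mathcal S_m(x)=2m\mathcal E(x)-m\mathcal E(x)=m\mathcal E(x).
\]
Hence, for $x\in\Omega_m$ the defining inequalities $0\le\mathcal S_m(x)$ and $\mathcal S_{2m}(x)<Dm^2$ give
\[
m\mathcal E(x)=\mathcal S_{2m}(x)-\mathcal S_m(x)<Dm^2-0,
\]
that is, $\mathcal E(x)<Dm$. Since $\mathcal E$ is continuous on $\Omega$, the strict inequality passes to the closure as $\mathcal E(x)\le Dm$ on $\bar\Omega_m$. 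I should check that this set is a genuine subset of $\Omega$, i.e. that the bound does not collide with the constraint $\mathcal E<d_g$. Lemma~\ref{l:D}(ii) gives $Dm\le Dm_1<d_g$, and the closure is taken in $\Omega$ anyway, so nothing more is needed.

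For \eqref{e:partial}, take $x\in\Omega_m$ lying in the topological boundary of $\Omega_m$. Then every neighborhood of $x$ contains points outside $\Omega_m$. The conditions defining $\Omega_m$ are
\[
\mathcal S_m\ge0,\qquad \mathcal S_{2m}-\mathcal S_m\ge 0,\qquad \mathcal S_{2m}<Dm^2,\qquad \mathcal E\ge T_*m .
\]
The condition $\mathcal S_{2m}<Dm^2$ is open by continuity. The condition $\mathcal S_{2m}-\mathcal S_m=m\mathcal E\ge0$ holds automatically on all of $\Omega$, since $\mathcal E>0$ there. So if $x$ satisfies $\mathcal S_m(x)>0$ and $\mathcal E(x)>T_*m$ strictly, continuity of $\mathcal S_m$ and $\mathcal E$ gives a neighborhood of $x$ on which all four conditions hold. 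That neighborhood lies in $\Omega_m$, so $x$ is an interior point, a contradiction. Therefore $\mathcal S_m(x)=0$ or $\mathcal E(x)=T_*m$, which is exactly \eqref{e:partial}.

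I do not expect a genuine obstacle. The only point requiring care is the observation that the middle inequality $\mathcal S_m\le\mathcal S_{2m}$ is not an actual constraint, because it is equivalent to $\mathcal E\ge0$. This is why it does not contribute a boundary component, and why only the two listed pieces can appear in \eqref{e:partial}.
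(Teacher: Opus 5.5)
Your proof is correct and matches the paper's: the first inclusion comes from the identity $\mathcal S_{2m}-\mathcal S_m=m\mathcal E$. The paper applies it directly on $\bar\Omega_m$ with the closed-up inequalities $\mathcal S_{2m}\le Dm^2$ and $\mathcal S_m\ge 0$, whereas you apply it on $\Omega_m$ and then pass to the closure, which is equivalent. The paper gives no explicit argument for \eqref{e:partial}, and your continuity argument supplies it correctly, including the observation that the constraint $\mathcal S_m\le\mathcal S_{2m}$ holds automatically because $\mathcal E>0$ on $\Omega$.
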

\begin{proof}
If $x\in \bar\Omega_m$, then $\mathcal S_{2m}(x)\leq Dm^2$ and $\mathcal S_m(x)\geq0$. Thus, we compute using \eqref{e:Lip2}
\begin{equation}
\mathcal E(x)=\frac{\mathcal S_{2m}(x)-\mathcal S_m(x)}{2m-m}\leq\frac{Dm^2-0}{m}=Dm.
\end{equation}
\end{proof}
\begin{dfn}
For every $m\in(0,m_1]$, define the minimax class
\begin{equation}
X_m:=\Big\{\  x\in C^0([0,1],\Omega_m)\ \Big|\ \mathcal E(x(0))=T_*m,\ \mathcal S_m(x(1))=0\ \Big\}
\end{equation}
and the minimax critical value
\begin{equation}\label{e:cm}
\mathfrak c(m):=\inf_{x\in X_m}\max_{r\in[0,1]}\mathcal S_m(x(r)).
\end{equation}
\end{dfn}
We proceed to give a lower bound and an upper bound on $\mathfrak c(m)$. They will be based on the following elementary observation.
\begin{lem}\label{r:quadratic}
For every $a>0$, the quadratic function \begin{equation}
(0,\infty)\to\mathbb R,\qquad r\mapsto r-\tfrac12ar^2,
\end{equation}
is positive for $r\in\big(0,\frac{2}{a}\big)$ and attains the maximum $\frac{1}{2a}$ at $r=\frac{1}{a}$.\hfill\qed
\end{lem}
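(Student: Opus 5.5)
This is elementary calculus; I would prove it directly by writing $f(r):=r-\tfrac12ar^2$ and factoring. First, $f(r)=r\big(1-\tfrac12ar\big)$. For $r>0$, the first factor is positive. The second factor is positive exactly when $r<\tfrac{2}{a}$. Hence $f>0$ on $\big(0,\tfrac2a\big)$, and $f$ vanishes at $r=\tfrac2a$.

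For the maximum, I would complete the square:
\[
f(r)=\frac{1}{2a}-\frac a2\Big(r-\frac1a\Big)^2 .
\]
Since $a>0$, the subtracted term is non-negative. It vanishes only at $r=\tfrac1a$, which lies in $(0,\infty)$. Thus $f(r)\leq\tfrac1{2a}$ for every $r>0$, with equality exactly at $r=\tfrac1a$. Alternatively, $f'(r)=1-ar$ vanishes only at $r=\tfrac1a$ and $f''=-a<0$, which gives the same conclusion.

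There is no real obstacle here. The only point to check is that the maximizer $\tfrac1a$ lies in the domain $(0,\infty)$ and inside the positivity interval, and both are immediate from $a>0$.

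The paper will use this with $r=\mathcal E(x)$ and $a=A/m$ in combination with Lemma~\ref{l:upperlower}. For example, $\mathcal S_m(x)\geq m\big(r-\tfrac12\tfrac Am r^2\big)>0$ for $0<\mathcal E(x)<\tfrac{2m}{A}$, which gives the lower bound on $\mathfrak c(m)$.
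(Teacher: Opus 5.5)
Your proof is correct and is exactly the elementary argument intended here: the paper states this lemma without proof, and factoring $r\big(1-\tfrac12 ar\big)$ together with completing the square is all that is needed. Your additional observation that the function vanishes at $r=\tfrac2a$ is what the paper uses in \eqref{e:epsilon0}, and your scaling $a=A/m$ in the variable $\mathcal E(x)$ is equivalent to the paper's choice $a=A$ in the rescaled variable $\mathcal E(x)/m$.
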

We first establish the lower bound for $\mathfrak c(m)$.
\begin{lem}\label{l:lowerc}
For every $m\in(0,m_1]$, recalling Definition \ref{d:T}, we have
\begin{enumerate}[(i)]
    \item the implication
    \begin{equation}
\mathcal E(x)=T_*m\quad\Longrightarrow\quad \mathcal S_m(x)>0,\quad\forall\,x\in\Omega;
\end{equation}
    \item the lower bound
    \begin{equation}
\mathfrak c(m)\geq \tau''m^2>\delta m^2;
\end{equation}
    \item the inclusion
    \begin{equation}
\Big\{\mathcal S_m(x)\geq \mathfrak c(m)-\delta m^2\Big\}\subset\Big\{\mathcal E(x)\geq T_*'m\Big\}.
\end{equation}
\end{enumerate}
\end{lem}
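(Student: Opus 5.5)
The plan is to reduce all three statements to Lemma \ref{l:upperlower}, rewritten in the rescaled variable $t:=\mathcal E(x)/m$. Dividing by $m^2$, Lemma \ref{l:upperlower} reads
\[
m^2\big(t-\tfrac12At^2\big)\leq\mathcal S_m(x)\leq m^2\big(t+\tfrac12At^2\big),\qquad t=\mathcal E(x)/m.
\]
So both the lower and the upper bound on $\mathcal S_m$ are governed by the quadratic functions of Lemma \ref{r:quadratic}, and $\tau',\tau''$ are exactly these bounds at $t=T_*'$ and $t=T_*''$.

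First, I extract from Definition \ref{d:T} the information that $T_*<T_*'<T_*''<2/A$. Since $\tau'=T_*'+\tfrac12A(T_*')^2>0$ and $\delta=\tau''-\tau'>0$, we get $\tau''>0$, i.e. $T_*''-\tfrac12A(T_*'')^2>0$. Hence $T_*''\in(0,2/A)$, and therefore $T_*,T_*'\in(0,2/A)$ as well. The same observation gives $\tau''>\tau''-\tau'=\delta$, which is the second inequality in (ii).

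For (i): if $\mathcal E(x)=T_*m$, the lower bound gives $\mathcal S_m(x)\geq m^2(T_*-\tfrac12AT_*^2)$. This is positive by Lemma \ref{r:quadratic}, since $T_*\in(0,2/A)$.

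For (ii): take any path $x\in X_m$. At the endpoint, $\mathcal S_m(x(1))=0$, so the lower bound forces $t_1:=\mathcal E(x(1))/m$ to satisfy $t_1-\tfrac12At_1^2\leq0$. Since $t_1>0$, Lemma \ref{r:quadratic} gives $t_1\geq 2/A>T_*''$. At the start, $\mathcal E(x(0))=T_*m<T_*''m$. The map $r\mapsto\mathcal E(x(r))$ is continuous, so the intermediate value theorem yields $r$ with $\mathcal E(x(r))=T_*''m$. At that point the lower bound gives $\mathcal S_m(x(r))\geq m^2(T_*''-\tfrac12A(T_*'')^2)=\tau''m^2$. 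Taking the maximum over $r$ and then the infimum over $X_m$ gives $\mathfrak c(m)\geq\tau''m^2$.

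For (iii): I argue by contraposition. The map $t\mapsto t+\tfrac12At^2$ is strictly increasing on $(0,\infty)$. So if $\mathcal E(x)<T_*'m$, the upper bound gives $\mathcal S_m(x)<\tau'm^2=(\tau''-\delta)m^2$, and by (ii) this is at most $\mathfrak c(m)-\delta m^2$. Hence $\mathcal S_m(x)\geq\mathfrak c(m)-\delta m^2$ forces $\mathcal E(x)\geq T_*'m$.

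There is no real obstacle here. The only point needing care is the initial observation that the condition $\delta>0$ in Definition \ref{d:T} silently forces $T_*''<2/A$; this is what makes both (i) and the crossing argument in (ii) work.
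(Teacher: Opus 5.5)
Your proof is correct and follows the paper's route: you rewrite Lemma \ref{l:upperlower} in terms of $\mathcal E(x)/m$, use Lemma \ref{r:quadratic} for (i), apply the intermediate value theorem along the path to hit $\{\mathcal E=T_*''m\}$ for (ii), and use monotonicity of the upper bound together with (ii) for the contrapositive in (iii). Your remark that $\delta>0$ forces $\tau''>\tau'>0$, and hence $T_*''<2/A$, makes explicit what the paper uses implicitly when it writes ``Since $\tau''>0$''.
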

\begin{proof}
By Lemma \ref{l:upperlower}, if $x\in\Omega$ and $r:=\mathcal E(x)$, then
\begin{equation}\label{e:double}
mr-\tfrac12Ar^2\leq \mathcal S_m(x)\leq mr+\tfrac12Ar^2.
\end{equation}
Since $\tau''>0$, applying Lemma \ref{r:quadratic} with $a:=A$, we see that if $r<T_*''m$, then $\mathcal S_m(x)>0$. Since $T_*<T_*''$, then (i) follows.

Moreover, if $x\in X_m$, then since $\mathcal S_m(x(1))=0$, we deduce that the path $x$ intersects the set $\{\mathcal E=T_*''m\}$. Hence, by the definition of $\tau''$ and of the minimax value $\mathfrak c(m)$, we conclude that $\mathfrak c(m)\geq \tau''m^2$ as required by (ii).

Finally, we establish the contrapositive inclusion in property (iii). Indeed, if $\mathcal E(x)<T_*'m$, then by \eqref{e:double}, we get
\begin{equation}
\mathcal S_m(x)< \tau'm^2= \tau''m^2-\delta m^2\leq \mathfrak c(m)-\delta m^2.
\end{equation}
\end{proof}
We now establish the upper bound for $\mathfrak c(m)$.
\begin{lem}\label{l:upperc}
For every $\varepsilon>0$, there exists $m_\varepsilon\in(0,m_1]$ such that 
\begin{equation}
\forall\,m\in(0,m_\varepsilon],\ \ \exists\,x_m\in X_m,\qquad \max_{r\in[0,1]}\mathcal S_m(x_m(r))\leq \frac{\pi m^2}{\Vert\beta\Vert_\infty}(1+\varepsilon).
    \end{equation}
    Therefore, we have the upper bound
\begin{equation}
\mathfrak c(m)\leq \frac{\pi m^2}{\Vert\beta\Vert_\infty}(1+\varepsilon).
\end{equation}
\end{lem}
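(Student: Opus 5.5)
The plan is to build the required path $x_m\in X_m$ by rescaling the special path of Lemma \ref{l:special}. Write $b:=\Vert\beta\Vert_\infty$ and let $r\mapsto x(r)$, $r\in(0,r_0]$, be the path given there. Since $\mathcal E(x(r))=r$, we have
\begin{equation*}
\mathcal S_m(x(r))=mr-\tfrac{b}{4\pi}r^2+o(r^2).
\end{equation*}
Substitute $r=m\rho$, with $\rho$ ranging in the fixed compact interval $J:=[T_*,\tfrac{4\pi}{b}(1+\eta)]$ for a small $\eta>0$ to be chosen in terms of $\varepsilon$. This gives
\begin{equation*}
\mathcal S_m(x(m\rho))=m^2\big(\rho-\tfrac{b}{4\pi}\rho^2+\omega_m(\rho)\big),\qquad \sup_{\rho\in J}|\omega_m(\rho)|\to0\ \text{ as }m\to0,
\end{equation*}
because the $o(r^2)$ term is uniform on $r\le m\max J$. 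For $m$ small we also have $m\max J\le r_0$, so the path is defined.

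The first step is to find the endpoint. By Lemma \ref{l:lowerc}(i), $\mathcal S_m(x(mT_*))>0$. At $\rho=\tfrac{4\pi}{b}(1+\eta)$ the quadratic $\rho-\tfrac{b}{4\pi}\rho^2$ equals $-\tfrac{4\pi}{b}\eta(1+\eta)<0$, so for $m$ small the rescaled action there is negative. By the intermediate value theorem, let $\rho_m$ be the first zero of $\rho\mapsto\mathcal S_m(x(m\rho))$ in $(T_*,\tfrac{4\pi}{b}(1+\eta))$. Define
\begin{equation*}
x_m(s):=x\big(m(T_*+s(\rho_m-T_*))\big),\qquad s\in[0,1].
\end{equation*}
Then $\mathcal E(x_m(0))=T_*m$, $\mathcal S_m(x_m(1))=0$, and $\mathcal S_m\ge0$ along the whole path by the choice of the first zero.

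The second step is to verify that the path lies in $\Omega_m$. The inequality $\mathcal S_m\le\mathcal S_{2m}$ holds automatically, since $\mathcal S_{2m}-\mathcal S_m=m\mathcal E>0$. The lower bound $\mathcal E\ge T_*m$ holds because $\rho\ge T_*$. For the upper bound on $\mathcal S_{2m}$, note that $\mathcal E(x_m(s))\le \tfrac{4\pi}{b}(1+\eta)m$. Choosing $\eta$ small so that $\tfrac{4\pi}{b}(1+\eta)<2T_1$ is possible by \eqref{e:T1}, which gives $T_1>2\pi/b$. Lemma \ref{l:D}(i) then yields $\mathcal S_{2m}\le\tfrac12Dm^2<Dm^2$. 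Finally $\mathcal E<d_g$ holds since $Dm_1<d_g$. Hence $x_m\in X_m$ for all $m\le m_\varepsilon$, where $m_\varepsilon$ is small enough for the above estimates.

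The last step is the action bound. Lemma \ref{r:quadratic} with $a=\tfrac{b}{2\pi}$ shows that $\rho-\tfrac{b}{4\pi}\rho^2\le\tfrac{\pi}{b}$. Therefore $\max_s\mathcal S_m(x_m(s))\le m^2\big(\tfrac{\pi}{b}+\sup_J|\omega_m|\big)\le\tfrac{\pi m^2}{b}(1+\varepsilon)$ once $m$ is small. The upper bound on $\mathfrak c(m)$ then follows from the definition \eqref{e:cm}.

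The main point needing care is the uniformity of the $o(r^2)$ remainder after rescaling, which is what makes $\omega_m\to0$ uniformly on $J$. It should follow directly from Lemma \ref{l:special}(ii) as stated, since $r=m\rho\le m\max J\to0$ uniformly in $\rho\in J$.
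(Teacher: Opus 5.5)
Your proposal is correct and follows the paper's proof essentially step by step. Like the paper, you use the special path of Lemma \ref{l:special}, take the first zero of $\mathcal S_m$ beyond $\mathcal E=T_*m$ via Lemma \ref{l:lowerc}(i), check membership in $\Omega_m$ through Lemma \ref{l:D}(i) using $T_1>2\pi/\Vert\beta\Vert_\infty$, and bound the maximum with Lemma \ref{r:quadratic}. The only difference is cosmetic: you rescale $r=m\rho$ and carry the remainder as a uniform additive error $\omega_m$, whereas the paper absorbs the $o(r^2)$ term into the coefficient, $\mathcal S_m(x(r))\le mr-\frac{\Vert\beta\Vert_\infty}{4\pi(1+\varepsilon)}r^2$ for $r\le r_\varepsilon$, and your $\eta$ plays the role of its auxiliary $\varepsilon_0$.
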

\begin{proof}
Fix any $\varepsilon_0$. Without loss of generality, it is enough to show the lemma assuming $\varepsilon\leq \varepsilon_0$. Let $x\colon(0,r_0]\to\Omega$ be the continuous path provided by Lemma \ref{l:special}. Then, for all $m\in(0,m_1]$
\begin{equation}
\mathcal S_m(x(r))=mr-\frac{1}{4\pi}\Vert\beta\Vert_\infty r^2+o(r^2)
\end{equation}
as $r$ tends to zero. Therefore, there exists $r_\varepsilon\in(0,r_0]$ such that
\begin{equation}\label{e:upperepsilon}
\mathcal S_m(x(r))\leq mr-\frac{1}{4\pi}\frac{\Vert\beta\Vert_\infty}{(1+\varepsilon)} r^2,\qquad \forall\,r\in(0,r_\varepsilon].
\end{equation}
Let $m_\varepsilon\in(0,m_1]$ be such that
\begin{equation}
\frac{4\pi(1+\varepsilon_0)}{\Vert\beta\Vert_\infty}m_\varepsilon<r_\varepsilon.
\end{equation} 
Let $m\in(0,m_\varepsilon]$ and define
\begin{equation}
r_m:=\frac{4\pi (1+\varepsilon_0)}{\Vert\beta\Vert_\infty}m<r_\varepsilon.
\end{equation}
From Definition \ref{d:T}, we get
\begin{equation}\label{e:tstar}
T_*m<r_m.
\end{equation}
Moreover, using $\varepsilon\leq \varepsilon_0$ in \eqref{e:upperepsilon}, we get
\begin{equation}\label{e:epsilon0}
\mathcal S_m(x(r_m))\leq m r_m-\frac{1}{4\pi}\frac{\Vert\beta\Vert_\infty}{1+\varepsilon_0} r_m^2=0,
\end{equation}
where the last equality follows from Lemma \ref{r:quadratic}.

By \eqref{e:tstar} and \eqref{e:epsilon0} and property (i) in Lemma \ref{l:lowerc}, there exists an interval $[T_*m,r_m']\subset(0,r_m]$ such that
\begin{equation}
\mathcal S_m(x(r))\geq 0,\ \ \forall\,r\in [T_*m,r_m'],\qquad \mathcal S_m(x(r_m'))=0.
\end{equation}
By the definition of $T_1$ in Lemma \ref{l:D}, we can choose $\varepsilon_0$ such that
\begin{equation}
\frac{4\pi (1+\varepsilon_0)}{\Vert\beta\Vert_\infty}\leq 2T_1.
\end{equation}
By the defition of $r_m$, we get
\begin{equation}
\mathcal E(x(r))=r\leq r_m\leq 2T_1m,\qquad \forall\,r\in [T_*m,r_m'].
\end{equation}
By Lemma \ref{l:D}.(i), it follows that $x|_{[T_*m,r_m']}\in X_m$, up to reparameterizing $[T_*m,r_m']$ to $[0,1]$. Finally, applying Lemma \ref{r:quadratic} with $a:=\frac{\Vert\beta\Vert_\infty}{2\pi (1+\varepsilon)m}$ and using \eqref{e:upperepsilon}, we get 
\begin{equation}
\max_{r\in[0,1]}\mathcal S_m(x_m(r))\leq m\frac12\frac{2\pi (1+\varepsilon)m}{\Vert\beta\Vert_\infty}.
\end{equation}\end{proof}
\section{Deforming the Paths in the Minimax Family}\label{s:Deforming the Paths in the Minimax Family}
\subsection{Good and Bad Critical Points}
Now that we have defined the minimax class $X_m$, we need to construct a forward-complete vector field $W_m$ that will be used to deform the paths $x\in X_m$ and detect the critical points needed in Theorem \ref{t:D}. To this purpose, let us distinguish the sets of good and bad critical points.
\begin{dfn}
For every $m\in(0,m_1]$, let
\begin{equation}
\begin{aligned}
K_m&:=\Big\{\  x\in \Omega\cap \mathrm{Crit}\,\mathcal S_m\ \Big|\ T_*m\leq \mathcal E(x),\ \mathcal S_m(x)=\mathfrak c(m),\ \mathcal S_{2m}(x)\leq Dm^2 \ \Big\};\\
L_m&:=\bigcup_{\mu\in[m,2m]}\Big\{ \ x\in \Omega\cap \mathrm{Crit}\,\mathcal S_\mu\  \Big|\ 2 T_0 m\leq \mathcal E(x),\ \mathcal S_m(x)=\mathfrak c(m),\ \mathcal S_{2m}(x)\leq Dm^2 \Big\}.
\end{aligned}
\end{equation}
\end{dfn}
\begin{lem}\label{l:KL}
The sets $K_m$ and $L_m$ are compact and are contained in the closure $\bar\Omega_m$.
\end{lem}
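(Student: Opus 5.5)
We will prove Lemma \ref{l:KL} in two steps: first the inclusion in $\bar\Omega_m$, then compactness via the Palais--Smale characterization of Lemma \ref{l:PS}.

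\emph{Inclusion.} For $x\in K_m$ we have $\mathcal S_m(x)=\mathfrak c(m)>0$ by Lemma \ref{l:lowerc}(ii), $\mathcal E(x)\geq T_*m$, and $\mathcal S_{2m}(x)\leq Dm^2$. Moreover $\mathcal S_{2m}(x)-\mathcal S_m(x)=m\mathcal E(x)>0$, so $0\le\mathcal S_m(x)\le\mathcal S_{2m}(x)$. The only condition of $\Omega_m$ that may fail is the strict inequality $\mathcal S_{2m}<Dm^2$. To handle it, take a small smooth curve $y_s\to x$ with $\mathcal S_{2m}(y_s)<Dm^2$; for instance, follow $-\nabla\mathcal S_{2m}$, which is nonzero at $x$ since $x$ is critical for $\mathcal S_m$ and $\mathrm d\mathcal S_{2m}=\mathrm d\mathcal S_m+m\,\mathrm d\mathcal E$ with $\mathrm d\mathcal E\neq0$ by Lemma \ref{l:differentials}. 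The other inequalities are open or preserved for small $s$, the condition $\mathcal E\ge T_*m$ being strict because $T_*<T_*'$ and Lemma \ref{l:lowerc}(iii). Hence $x\in\bar\Omega_m$.

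For $L_m$ the argument is the same: $\mathcal S_m(x)=\mathfrak c(m)>0$, and $\mathcal E(x)\ge 2T_0m$. We need $2T_0m\ge T_*m$, or else we invoke Lemma \ref{l:lowerc}(iii), which gives $\mathcal E\ge T_*'m>T_*m$ directly from $\mathcal S_m=\mathfrak c(m)$. This second route is better, since it avoids any comparison between $T_0$ and $T_*$. The perturbation off $\{\mathcal S_{2m}=Dm^2\}$ works as before, using $-\nabla\mathcal S_{2m}$ or simply $-\nabla\mathcal E$. Decreasing $\mathcal E$ slightly lowers $\mathcal S_{2m}-\mathcal S_m=m\mathcal E$, but we should keep $\mathcal S_m\ge0$; this holds by openness since $\mathcal S_m(x)>0$.

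\emph{Compactness.} By Lemma \ref{l:inclusion}, every point of $K_m\cup L_m$ satisfies $T_*m\le\mathcal E(x)\le Dm<d_g$, using $m\le m_1$ and Lemma \ref{l:D}(ii). For $K_m$, a sequence $x_k$ has $\mathrm d_{x_k}\mathcal S_m=0$ with energies in $[T_*m,Dm]$, so Lemma \ref{l:PS} gives a convergent subsequence whose limit is critical. The conditions $\mathcal S_m=\mathfrak c(m)$ and $\mathcal S_{2m}\le Dm^2$ are closed, and $\mathcal E\ge T_*m$ is closed, so the limit lies in $K_m$.

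For $L_m$, with $x_k\in\mathrm{Crit}\,\mathcal S_{\mu_k}$, first pass to a subsequence with $\mu_k\to\mu\in[m,2m]$. Then
\[
\mathrm d_{x_k}\mathcal S_\mu=(\mu-\mu_k)\,\mathrm d_{x_k}\mathcal E\to0,
\]
since $\Vert\mathrm d\mathcal E\Vert\le1$. Lemma \ref{l:PS}, applied to $\mathcal S_\mu$, yields a limit $x\in\mathrm{Crit}\,\mathcal S_\mu$, and the closed conditions pass to the limit.

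The main obstacle is the boundary subtlety that $\Omega_m$ is defined with a strict inequality, so that membership in the closure requires the perturbation argument above. Nothing else is delicate.
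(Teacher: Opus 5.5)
Your compactness argument is the paper's: Lemma \ref{l:PS}, plus $\mu_k\to\mu$ and $\mathrm d_{x_k}\mathcal S_\mu=(\mu-\mu_k)\,\mathrm d_{x_k}\mathcal E$ for $L_m$. Your treatment of the inclusion is more careful than the paper's one-line appeal to $\mathfrak c(m)>0$. In particular, using Lemma \ref{l:lowerc}(iii) to get $\mathcal E\ge T_*'m>T_*m$ on $K_m\cup L_m$ is exactly what is needed.

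However, one step fails: the claim that for $L_m$ the perturbation off $\{\mathcal S_{2m}=Dm^2\}$ ``works as before''. If $x\in L_m$ is critical for $\mathcal S_\mu$ with $\mu=2m$, then $x$ is a critical point of $\mathcal S_{2m}$ itself. So $\nabla\mathcal S_{2m}(x)=0$ and $\mathrm d_x\mathcal S_{2m}[-\nabla\mathcal E]=-(2m-\mu)\Vert\nabla\mathcal E(x)\Vert^2=0$. In fact every curve through $x$ has vanishing first derivative of $\mathcal S_{2m}$ at $x$, so no first-order argument can push you into $\{\mathcal S_{2m}<Dm^2\}$. If $\mathcal S_{2m}(x)=Dm^2$ and $x$ were a local minimum of $\mathcal S_{2m}$, then $x$ would not lie in $\bar\Omega_m$ at all. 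Your remark that moving along $-\nabla\mathcal E$ lowers $\mathcal S_{2m}-\mathcal S_m=m\mathcal E$ does not help here, since you need $\mathcal S_{2m}$ itself to drop.

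The missing input is second-order, and it comes from the index bound. Since $\mathcal E(x)\ge 2T_0m=T_0\cdot 2m$, Lemma \ref{l:index} gives $\mathrm{ind}_{2m}(x)\ge 3$. This requires $2m<2m_0$, so strictly one should take $m_1<m_0$; the paper passes over the same endpoint issue in Lemma \ref{l:sigma}. Pick $w\in T_x\Omega$ with $\mathrm d^2_x\mathcal S_{2m}[w,w]<0$, and a curve $y_s$ through $x$ with velocity $w$ (e.g.\ a straight line in a chart). Then $\mathcal S_{2m}(y_s)=Dm^2+\tfrac12 s^2\,\mathrm d^2_x\mathcal S_{2m}[w,w]+o(s^2)<Dm^2$ for small $s\neq0$. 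Meanwhile $\mathcal S_m>0$, $\mathcal E>T_*m$ and $y_s\in\Omega$ persist by openness, as in your argument.

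For $K_m$ and for $\mu<2m$ your first-order argument is correct, since there $\mathrm d_x\mathcal S_{2m}=(2m-\mu)\,\mathrm d_x\mathcal E\neq0$. Finally, your compactness part becomes independent of this issue if you take the energy bounds directly from the defining conditions, $T_*'m\le\mathcal E(x)=(\mathcal S_{2m}(x)-\mathcal S_m(x))/m<Dm$, rather than through Lemma \ref{l:inclusion}.
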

\begin{proof}
The fact that $K_m$ and $L_m$ are contained in $\bar\Omega_m$ follows from the fact that $\mathfrak c(m)$ is positive, see Lemma \ref{l:lowerc}. Moreover, by Lemma \ref{l:inclusion} they are contained in the set 
\begin{equation}\label{e:loweruppere}
\{T_*m\leq\mathcal E(x)\leq Dm\}.
\end{equation}
By Lemma \ref{l:PS} we immediately see that $K_m$ is compact. Let now $(x_k)\subset L_m$, then there exists $(\mu_k)\subset[m,2m]$ such that $x_k\in\mathrm{Crit}\,\mathcal S_{\mu_k}$. Up to taking a subsequence, we can assume that $\mu_k\to\mu_*\in[m,2m]$. Therefore,
\begin{equation}
\Vert \mathrm{d}_{x_k}\mathcal S_{\mu_*}\Vert=\Vert \mathrm{d}_{x_k}(\mu_*-\mu_k)\mathcal E+\mathrm{d}_{x_k}\mathcal S_{\mu_k}\Vert=\Vert \mathrm{d}_{x_k}(\mu_*-\mu_k)\mathcal E\Vert=|\mu_*-\mu_k|\Vert \mathrm{d}_{x_k}\mathcal E\Vert.
\end{equation}
Since $\Vert \mathrm{d}\mathcal E\Vert$ is bounded on $\Omega$, we deduce that $\Vert \mathrm{d}_{x_k}\mathcal S_{\mu_k}\Vert\to0$. By \eqref{e:loweruppere} and Lemma \ref{l:PS}, we conclude that, up to a subsequence, $x_k\to x_*\in\mathrm{Crit}\,\mathcal S_{\mu_*}$ and the point $x_*\in L_m$.
\end{proof}
Thanks to Lemma \ref{l:KL}, Lemma \ref{l:upperc} and Lemma \ref{l:inclusion}, we have the immediate implication
\begin{equation}\label{e:implication}
\forall\,m\in(0,m_1],\ \ K_m\neq\varnothing\quad\Longrightarrow\quad \text{Theorem \ref{t:D}}.
\end{equation}
Eventually, our strategy will be to argue by contradiction to show that the left-hand side holds.

As a first step, in the next subsection we find a suitable local deformation of the action near $L_m$ using the local deformations of Lemma \ref{l:local}. 
\subsection{A Slice Deformation Near $L_m$} To construct the deformation, we will make use of the following elementary result.
\begin{lem}\label{l:system}
There exists a system of open neighborhoods $\{U_m^{(k)}\}_{k\in\mathbb N}$ of $L_m$ inside $\bar\Omega_m$ such that
\begin{equation}
\forall\,k\in\mathbb N,\quad U_m^{(k+1)}\ \text{has a positive distance from}\ \bar\Omega_m\setminus U_m^{(k)}.
\end{equation}
\end{lem}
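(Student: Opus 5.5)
The plan is to take metric neighborhoods of $L_m$ with radii halving at each step, and to use the compactness of $L_m$ (Lemma \ref{l:KL}) only to guarantee that these neighborhoods are honest open sets of $\bar\Omega_m$. Let $\mathrm{dist}$ be the distance on $\Omega$ from \eqref{e:distance}, fix any $\rho_0>0$, and set
\begin{equation*}
U_m^{(k)}:=\Big\{x\in\bar\Omega_m\ \Big|\ \mathrm{dist}(x,L_m)<\rho_0 2^{-k}\Big\},\qquad k\in\mathbb N.
\end{equation*}
The function $x\mapsto\mathrm{dist}(x,L_m)$ is $1$-Lipschitz. Hence each $U_m^{(k)}$ is relatively open in $\bar\Omega_m$, and it contains $L_m$ because $L_m\subset\bar\Omega_m$ by Lemma \ref{l:KL}. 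If $L_m=\varnothing$, the statement is vacuous: we may take all $U_m^{(k)}$ empty, or note that the distance condition is trivially satisfied by convention.

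For the key property, take $y\in U_m^{(k+1)}$ and $z\in\bar\Omega_m\setminus U_m^{(k)}$. Then $\mathrm{dist}(y,L_m)<\rho_02^{-k-1}$ and $\mathrm{dist}(z,L_m)\geq\rho_02^{-k}$. By the triangle inequality for the distance to a set,
\begin{equation*}
\mathrm{dist}(y,z)\geq \mathrm{dist}(z,L_m)-\mathrm{dist}(y,L_m)> \rho_0 2^{-k}-\rho_02^{-k-1}=\rho_02^{-k-1}.
\end{equation*}
Taking the infimum over $y$ and $z$ gives a positive distance of at least $\rho_02^{-k-1}$ between $U_m^{(k+1)}$ and $\bar\Omega_m\setminus U_m^{(k)}$.

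The argument is elementary, and I do not expect a real obstacle. The only point needing care is the meaning of ``neighborhood of $L_m$'': the construction is metric, so it requires only a distance function on $\Omega$ and not local compactness of the infinite-dimensional manifold.

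If the later construction also needs the $U_m^{(k)}$ to be small, for example contained in finitely many of the sets $U_{x_*}$ from Lemma \ref{l:local} covering $L_m$, I would proceed as follows. Use the compactness of $L_m$ to extract a finite subcover, take a Lebesgue number $\lambda$ of this cover, and choose $\rho_0<\lambda$. Then each $U_m^{(k)}$ lies in the union of the finitely many $U_{x_*}$, and the separation estimate above is unchanged.
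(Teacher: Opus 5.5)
Your construction is the paper's: metric neighborhoods of $L_m$ with radii tending to zero, where the paper uses $1/k$. Your triangle-inequality proof of the separation property is correct; the paper leaves that step implicit. The gap is that your main argument never proves the property the paper's proof is entirely devoted to. That property is that $\{U_m^{(k)}\}$ is a \emph{system} (a fundamental system) of neighborhoods of $L_m$, i.e.\ every open neighborhood $U$ of $L_m$ in $\bar\Omega_m$ contains some $U_m^{(k)}$. Without it the statement has no content: the constant family $U_m^{(k)}=\bar\Omega_m$ satisfies the separation condition trivially, by the same $\inf\varnothing=+\infty$ convention you invoke for $L_m=\varnothing$. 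It is also exactly what is used right afterwards. The sets $U_m''\subset U_m'$ must be chosen so that $U_m'$ lies at positive distance from $\bar\Omega_m\setminus U_m$, where $U_m$ is the given neighborhood from Lemma~\ref{l:sigma}. One gets them by taking $U_m'=U_m^{(k+1)}$ and $U_m''=U_m^{(k+2)}$ for some $k$ with $U_m^{(k)}\subset U_m$, and such a $k$ exists only if the family is cofinal.

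This cofinality is where compactness of $L_m$ (Lemma~\ref{l:KL}) enters. You assign compactness to openness, but openness follows from the $1$-Lipschitz property alone, as your own next sentence shows. Given an open $U\supset L_m$, the function $x\mapsto\mathrm{dist}(x,\bar\Omega_m\setminus U)$ is continuous and positive on the compact set $L_m$. Hence $d:=\mathrm{dist}(L_m,\bar\Omega_m\setminus U)>0$, and $U_m^{(k)}\subset U$ as soon as $\rho_0 2^{-k}<d$. For a closed but non-compact set this can fail. Your last paragraph gestures at this, but only as an optional add-on. Also, the standard Lebesgue number lemma only controls subsets of $L_m$, whereas you need the ambient statement about balls in $\bar\Omega_m$ centred at points of $L_m$, which is precisely $d>0$ above. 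Neither a finite subcover nor shrinking $\rho_0$ is needed, since your radii already tend to zero.
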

\begin{proof}
Let $U_m^{(k)}$ be the set of points at a distance strictly less that $\tfrac1k$ from $L_m$. Let us show that this is a system of open neighborhoods of $L_m$. Let $U$ be any open neighborhood of $L_m$. By Lemma \ref{l:KL}, the set $L_m$ is compact. Therefore, it has a positive distance $d$ from the disjoint closed set $\bar\Omega_m\setminus U$. Then, $U_m^{(k)}\subset U$ if $\tfrac1k<d$.
\end{proof}
We are now ready to define the deformation near $L_m$ that will keep $\mathcal E$ fixed and decrease the actions. The proof is a streamlined version of \cite[Lemma 4]{Assenza2024}.
\begin{lem}\label{l:sigma}
For every $m\in(0,m_1]$, there exist
\begin{enumerate}[(a)]
\item a real number $\hat\gamma_m>0$,
\item an open neighborhood $U_m\subset\bar\Omega_m$ of $L_m$,
\item a set $\hat L_m\subset\bar\Omega_m$,
\item a smooth map $\hat\sigma_m\colon \Omega_m\setminus \hat L_m\to \Omega_m$, 
\end{enumerate}
with the following properties
\begin{enumerate}[(i)]
    \item the set $\hat L_m$ is a finite union of embedded submanifolds of $\bar\Omega_m$ of codimension $2$;
    \item the set $\hat L_m$ is at a positive distance from $\Omega_m\cap\partial\Omega_m$;
    \item the support of the map $\hat\sigma_m$ is at a positive distance from $\Omega_m\cap\partial\Omega_m$;
       \item $\mathcal S_m(\hat\sigma_m(x))\leq \mathcal S_m(x)$ for all $x\in \Omega_m\setminus \hat L_m$;
    \item $\mathcal S_m(\hat\sigma_m(x))\leq\mathcal S_m(x)-\hat\gamma_m$ for all $x\in (\Omega_m\cap U_m)\setminus \hat L_m$.
\end{enumerate}
\end{lem}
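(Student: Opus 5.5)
The plan is to cover the compact set $L_m$ by finitely many of the local slice charts of Lemma \ref{l:local} and compose the resulting deformations. Every $x_*\in L_m$ is a critical point of some $\mathcal S_\mu$ with $\mu\in[m,2m]\subset(0,2m_0)$ and $\mathcal E(x_*)\geq 2T_0m\geq T_0\mu$. Lemma \ref{l:index} then gives $\mathrm{ind}_\mu(x_*)\geq 3$, so Lemma \ref{l:local} applies at $x_*$. By Lemma \ref{l:lowerc}(i) and Lemma \ref{l:KL}, points of $L_m$ satisfy $\mathcal E\geq 2T_0m>T_*m$ (taking $T_*<2T_0$, which we may arrange) and $\mathcal S_m=\mathfrak c(m)>0$. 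Lemma \ref{l:inclusion} says $\Omega_m\cap\partial\Omega_m$ lies in $\{\mathcal E=T_*m\}\cup\{\mathcal S_m=0\}$, so $L_m$ has positive distance $d_1$ from this set. I will choose $d_0<d_1/3$ in Lemma \ref{l:local}. Properties (ii) and (iii) then follow, because each $\hat L_{x_*}$ and each support lies in $V_{x_*}$, and $V_{x_*}$ sits in the $d_0$-ball around $x_*$.

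By compactness, pick $x_1,\dots,x_N\in L_m$ with $L_m\subset U:=\bigcup_j U_{x_j}$. Set $\hat L_m:=\bigcup_j\hat L_{x_j}\cap\bar\Omega_m$ and let $U_m$ be a smaller neighborhood of $L_m$ (from Lemma \ref{l:system}) whose closure has positive distance from the complement of $U$. The natural candidate is a composition $\hat\sigma_m:=\sigma^{t_N}_{x_N}\circ\cdots\circ\sigma^{t_1}_{x_1}$ with small times $t_j$. Here a genuine obstacle appears: each $\sigma_{x_j}$ is only defined off $\hat L_{x_j}$. After applying the earlier maps, a point may land on a later $\hat L_{x_j}$. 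Then the exceptional set becomes $\bigcup_j(\sigma_{\text{prev}})^{-1}(\hat L_{x_j})$, which is still a finite union of codimension-$2$ embedded submanifolds because each $\sigma^t$ is a smooth embedding. So I will define $\hat L_m$ as this union of preimages, and property (i) survives.

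For properties (iv) and (v): each factor does not increase $\mathcal S_m$ and preserves $\mathcal E$ (Lemma \ref{l:local}(v),(vi)). Hence $\mathcal S_{2m}=\mathcal S_m+m\mathcal E$ also does not increase, $\mathcal S_m$ and $\mathcal E$ stay bounded below as required, and the image remains in $\Omega_m$. For the uniform drop on $U_m$ I will use the displacement bound (iv) of Lemma \ref{l:local}. Choose the $t_j$ small enough that the total displacement $\sum_jC_{x_j}t_j$ is less than the distance from $U_m$ to $\Omega\setminus U$. I also need more: a point $x\in U_m$ lies in some $U_{x_j}$, and after the first $j-1$ maps it must still lie in $U_{x_j}$. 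To guarantee this, I will use slightly shrunk sets $U'_{x_j}\Subset U_{x_j}$ that still cover $L_m$, and take the displacement smaller than their separation. Then, at step $j$, the point receives a drop of at least $t_j^2\gamma_{x_j}$ by Lemma \ref{l:local}(vii), and every other step does not increase $\mathcal S_m$. Setting $\hat\gamma_m:=\min_j t_j^2\gamma_{x_j}$ gives (v).

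The main obstacle is making this bookkeeping consistent. The shrinking margins must be chosen before the $t_j$, and the $t_j$ must be chosen relative to those margins. One also has to check that each composed map is still a smooth embedding off a codimension-$2$ set inside $\bar\Omega_m$. If the composition causes trouble, the fallback is to choose the charts $V_{x_j}$ pairwise disjoint where possible, or to order the maps so that the displacement estimate controls each point's membership in later neighborhoods.
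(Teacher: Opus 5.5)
Your proposal is correct and follows essentially the paper's route: you cover $L_m$ by finitely many slice charts from Lemma~\ref{l:local} (index $\geq 3$ via Lemma~\ref{l:index}), compose the maps $\sigma_{x_j}$ with the exceptional set built iteratively from preimages, and play the displacement bound against shrunk neighborhoods, at positive distance from the complements of the $U_{x_j}$, to get a uniform drop; the differences from the paper (order of composition, separate times $t_j$ instead of a common $t_0$, $d_0<d_1/3$ instead of $d_1/2$) are only cosmetic. One small correction: you do not need to arrange $T_*<2T_0$, since Lemma~\ref{l:lowerc}(iii) already gives $\mathcal E\geq T_*'m>T_*m$ on $\{\mathcal S_m=\mathfrak c(m)\}$, and the positivity $\mathfrak c(m)>0$ comes from Lemma~\ref{l:lowerc}(ii), not (i).
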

\begin{proof}
Since $L_m$ is compact and disjoint from the closed set 
\begin{equation}
\big\{T_*m=\mathcal E(x)\big\}\cup\big\{\mathcal S_m(x)=0\big\}\supset\Omega_m\cap\partial\Omega_m,    
\end{equation}
see Lemma \ref{l:inclusion}, the sets $L_m$ and $\Omega_m\cap\partial\Omega_m$ lie at a positive distance $d_1$ from each other. Let $x_*\in L_m$ be arbitrary. Then $x_*\in \mathcal S_\mu$ for some $\mu\in[m,2m]$ and $2T_0m\leq\mathcal E(x_*)$. Therefore, $T_0\mu\leq \mathcal E(x_*)$, $\mu<2m_0$ and we conclude that $\mathrm{ind}_\mu(x)\geq 3$ by Lemma \ref{l:index}. Thus, we can apply Lemma \ref{l:local} with $d_0:=\tfrac12d_1$ and get objects
\begin{equation}
C_{x_*},\qquad \gamma_{x_*},\qquad U_{x_*},\qquad V_{x_*},\qquad \hat L_{x_*},\qquad \{\sigma^t_{x_*}\colon\Omega\setminus \hat L_{x_*}\to\Omega\}_{t\in[0,1]}.   
\end{equation}
Finally, let $U_{x_*}^0$ be a neighborhood of $x_*$ that is at a positive distance from $\bar\Omega_m\setminus U_{x_*}$.

Since $L_m$ is compact by Lemma \ref{l:KL}, there exist $x_1,\ldots,x_k\in L_m$ such that 
\begin{equation}
L_m\subset \bigcup_{j=1}^kU_{x_j}^0.
\end{equation}
and there exists $d>0$ such that
\begin{equation}\label{e:distance2}
\frac{1}{k-j}\mathrm{dist}(U_{x_j}^0,\bar\Omega_m\setminus U_{x_j})\geq d,\qquad \forall\,j=1\ldots,k-1.
\end{equation}
Let $t_0\in(0,1]$ be any real number such that
\begin{equation}\label{e:tcd}
t_0\cdot\max\{C_{x_1},\ldots C_{x_k}\}< d.
\end{equation}

Define the positive real number
\begin{equation}\label{e:gammam}
\hat\gamma_m:=t_0^2\cdot \min\{\gamma_{x_1},\ldots\gamma_{x_k}\}
\end{equation}
and the open neighborhood of $L_m$ in $\bar\Omega_m$
\begin{equation}
U_m:=\bigcup_{j=1}^kU_{x_j}^0\cap\bar\Omega_m.
\end{equation}

Define the set $\hat L_m$ iteratively,
\begin{equation}\label{e:hatL}
\begin{aligned}
\hat L_m^1&:=\hat L_{x_1},\\
\hat L_m^{j+1}&:=\hat L_{x_{j+1}}\cup (\sigma^{t_0}_{x_{j+1}})^{-1}(\hat L_m^j),\quad \forall\,j=1,\ldots,k-1,\\
\hat L_m&:=\hat L_m^k.
\end{aligned}
\end{equation}
By induction, $\hat L_m$ is a finite union of submanifolds of codimension $2$. Thus, property (i) of the present lemma is satisfied.

Define again iteratively
\begin{equation}
\begin{aligned}
\sigma_m^1&:=\sigma^{t_0}_{x_1}\colon \Omega\setminus\hat L_{x_1}\to\Omega,\\
\sigma_m^{j+1}&:=\sigma_m^j\circ\sigma^{t_0}_{x_{j+1}}\colon\Omega\setminus\hat L_m^{j+1}\to \Omega,\quad \forall\,j=1,\ldots,k-1,\\ \sigma_m'&:=\sigma_m^k\colon\Omega\setminus\hat L_m\to\Omega.
\end{aligned}
\end{equation}
We claim that the map $\sigma_m'$ is well-defined. Indeed, by the general rule for the domain of a composition of two functions $f_1$ and $f_0$, we have
\begin{equation}\label{e:domain}
\mathrm{domain}(f_1\circ f_0)=f^{-1}_0\big(\mathrm{domain}(f_1)\big)
\end{equation}
and therefore by induction
\begin{equation}
\begin{aligned}
\mathrm{domain}(\sigma_m^{j+1})=\mathrm{domain}(\sigma_m^j\circ\sigma_{x_{j+1}}^{t_0})&=(\sigma_{x_{j+1}}^{t_0})^{-1}\big(\mathrm{domain}(\sigma_m^j))\\
&=(\sigma_{x_{j+1}}^{t_0})^{-1}\big(\Omega\setminus \hat L_m^j\big)\\
&=(\Omega\setminus \hat L_{x_{j+1}})\setminus (\sigma_{x_{j+1}}^{t_0})^{-1}(\hat L^j_m)\\
&=\Omega\setminus \hat L^{j+1}_m.
\end{aligned}
\end{equation}
Moreover, $\sigma_m'$ preserves the function $\mathcal E$ as all $\sigma_{x_j}^{t_0}$ do, according to Lemma \ref{l:local}.(v). Furthermore, since all maps $\sigma_{x_j}^{t_0}$ and all sets $\hat L_{x_j}$ are supported in the set $V_{x_1}\cup\ldots\cup V_{x_k}$, which is at a distance at least $d_1-d_0=\tfrac12d_1$ from $\Omega_m\cap\partial\Omega_m$, it follows that $\hat L_m$ and the support of $\sigma_m'$ are at a positive distance from $\Omega_m\cap\partial\Omega_m$. This shows property (ii) of the present lemma. Finally, since $\mathcal S_m$ does not increase under $\sigma^{t_0}_{x_j}$ for all $j=1,\ldots,k$, the same is true for their composition $\sigma_m'$.

Therefore, $\sigma_m'$ leaves $\Omega_m$ invariant, and we define
\begin{equation}
\hat\sigma_m:=\sigma_m'|_{\Omega_m\setminus\hat L_m}\colon\Omega_m\setminus \hat L_m\to\Omega_m,
\end{equation}
which satisfies properties (ii) and (iv) of the present lemma.

Let us take $x\in (\Omega_m\cap U_m)\setminus\hat L_m$. Therefore, $x\in U_{x_j}^0\cap\Omega_m$ for some $j=1,\ldots,k$. We have
\begin{equation}
\hat\sigma_m(x)=\sigma^{j-1}_m\circ \sigma^{t_0}_{x_j}\circ \breve\sigma^j_m(x),\qquad \breve\sigma^j_m:=\sigma^{t_0}_{x_{j+1}}\circ \ldots\circ\sigma^{t_0}_{x_k}.
\end{equation}
First, note that by \eqref{e:tcd} and Lemma \ref{l:local}.(iii), we have
\begin{equation}
\mathrm{dist}(x,\breve{\sigma}^j_m(x))\leq C_{x_k}t_0+\ldots C_{x_{j+1}}t_0< (k-j) d
\end{equation}
by the definition of $t_0$. Hence, by the definition of $d$ in \eqref{e:distance2}, we have
\begin{equation}
y:=\breve\sigma^j_m(x)\in U_{x_j}.
\end{equation}
Therefore, using Lemma \ref{l:local}, and more precisely property (vi), then property (vii) and then property (vi) again, we get
\begin{equation}
\mathcal S_m(\hat\sigma_m(x))=\mathcal S_m\Big(\sigma^{j-1}_m\big(\sigma^{t_0}_{x_j}(y)\big)\Big)\leq \mathcal S_m\big(\sigma^{t_0}_{x_j}(y)\big)\leq \mathcal S_m(y)-t_0^2\gamma_{x_j}\leq \mathcal S_m(x)-t_0^2\gamma_{x_j}.
\end{equation}
Thus, property (v) of the present lemma follows from the definition of $\hat\gamma_m$ given in \eqref{e:gammam}.
\end{proof}

\subsection{Gradient Bounds from the Palais--Smale Condition}
Now that we have the neighborhood $U_m$ from Lemma \ref{l:sigma}, we can use it to find lower bounds on the norm of the gradient of the action exploiting the Palais--Smale condition proved in Lemma \ref{l:PS}, under the assumption that $K_m=\varnothing$. To this aim, we need the following refinement of the equality case of the Cauchy--Schwarz inequality, which is reminiscent of \cite[Lemma 5.3]{AbbondandoloMajer2008}.
\begin{lem}\label{l:CS}
Let $\mathcal H$ be a real vector space endowed with an inner product. Let $a\in[0,1]$ and assume that $u_0,u_1\in \mathcal H$ are two vectors such that
\begin{equation}
u_0\cdot u_1\leq -(1-a)\Vert u_0\Vert\Vert u_1\Vert.
\end{equation}
Then there exists $\lambda\in[0,1]$ such that
\begin{equation}
\Vert u_\lambda\Vert\leq \sqrt{2a}\min\big\{\Vert u_0\Vert,\Vert u_1\Vert\big\},\qquad u_\lambda:=(1-\lambda)u_0+\lambda u_1.
\end{equation}
\end{lem}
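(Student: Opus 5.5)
The plan is to choose $\lambda$ so that the two vectors enter $u_\lambda$ with equal length, and to use the hypothesis exactly once, through the squared norm of the sum of the two normalized vectors.

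First I dispose of the degenerate case. If $u_0=0$, then $\lambda=0$ gives $u_\lambda=0$ and the inequality holds. If $u_1=0$, then $\lambda=1$ does the same. From now on I assume $A:=\Vert u_0\Vert>0$ and $B:=\Vert u_1\Vert>0$, and I write $e_0:=u_0/A$ and $e_1:=u_1/B$ for the unit vectors.

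Next I take
\[
\lambda:=\frac{A}{A+B}\in(0,1),
\]
which is the choice making $(1-\lambda)A=\lambda B$. Then
\[
u_\lambda=(1-\lambda)Ae_0+\lambda Be_1=\frac{AB}{A+B}\,(e_0+e_1).
\]

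Dividing the hypothesis by $AB$ gives $e_0\cdot e_1\leq -(1-a)$, so
\[
\Vert e_0+e_1\Vert^2=2+2\,e_0\cdot e_1\leq 2-2(1-a)=2a .
\]
Hence $\Vert e_0+e_1\Vert\leq\sqrt{2a}$, and therefore
\[
\Vert u_\lambda\Vert\leq \frac{AB}{A+B}\sqrt{2a}.
\]

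It remains to note that $\frac{AB}{A+B}\leq\min\{A,B\}$. Indeed, $\frac{AB}{A+B}=A\cdot\frac{B}{A+B}\leq A$, and by symmetry it is also at most $B$. This gives the claimed bound $\Vert u_\lambda\Vert\leq\sqrt{2a}\min\{\Vert u_0\Vert,\Vert u_1\Vert\}$.

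The only step that needs care is the choice of $\lambda$ balancing the two lengths. Once that choice is made, the rest is the identity $\Vert e_0+e_1\Vert^2=2+2\,e_0\cdot e_1$.
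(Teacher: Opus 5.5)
Your proof is correct, and it takes a different route from the paper's.

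The paper argues as follows:
\begin{itemize}
\item It assumes without loss of generality that $\Vert u_0\Vert\le\Vert u_1\Vert$ with $u_1\neq0$.
\item It takes the component of $u_0$ orthogonal to $u_1$, namely $u_0+cu_1$ with $c=-u_0\cdot u_1/\Vert u_1\Vert^2\ge 0$.
\item It bounds $\Vert u_0+cu_1\Vert^2=\Vert u_0\Vert^2-(u_0\cdot u_1)^2/\Vert u_1\Vert^2\le a(2-a)\Vert u_0\Vert^2$. This step squares the hypothesis, which is legitimate since $u_0\cdot u_1\le 0$.
\item It divides by $(1+c)^2\ge1$ to land on the segment, with $\lambda=c/(1+c)$.
\end{itemize}

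You instead choose $\lambda=\Vert u_0\Vert/(\Vert u_0\Vert+\Vert u_1\Vert)$. This balances the two contributions so that $u_\lambda$ is a positive multiple of $e_0+e_1$. Everything then reduces to the identity $\Vert e_0+e_1\Vert^2=2+2\,e_0\cdot e_1$, and the hypothesis is used linearly after normalization.

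What your version buys:
\begin{itemize}
\item It is symmetric in $u_0,u_1$, so no ordering of the norms is needed.
\item It avoids the squaring step.
\item It handles the degenerate cases $u_0=0$ or $u_1=0$ by the obvious endpoint choice.
\end{itemize}

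The paper's version has a clear geometric meaning (distance from the origin to the line through $u_0$ in direction $u_1$). It also gives the marginally better factor $\sqrt{a(2-a)}$ in place of $\sqrt{2a}$, although this is then discarded.

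Either bound is more than enough for the application in Lemma~\ref{l:PS2}. There one only needs $\Vert u_\lambda\Vert\to0$ as $a\to0$ for uniformly bounded $u_0,u_1$.
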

\begin{proof}
If $u_0=u_1=0$, then we can take $u_\lambda=0$. Let us assume that $\Vert u_0\Vert\leq \Vert u_1\Vert$ with $u_1\neq0$. Taking the orthogonal projection of $u_0$ onto the line generated by $u_1$, we get
\begin{equation}
\Big\Vert u_0+\frac{-u_0\cdot u_1}{\Vert u_1\Vert ^2}u_1\Big\Vert^2=\Vert u_0\Vert^2-\frac{(u_0\cdot u_1)^2}{\Vert u_1\Vert^2}\leq \Vert u_0\Vert^2-(1-a)^2\Vert u_0\Vert^2=a(2-a)\Vert u_0\Vert^2.
\end{equation}
Since $c:=\frac{-u_0\cdot u_1}{\Vert u_1\Vert^2}\geq 0$, we can divide both sides of the inequality by $(1+c)^2$ and get
\begin{equation}
\Vert u_\lambda\Vert^2\leq \frac{a(2-a)\Vert u_0\Vert^2}{(1+c)^2}\leq \frac{2a\Vert u_0\Vert^2}{1},\qquad \lambda:=\frac{c}{1+c}.
\end{equation}
\end{proof}
Before establishing the lower bounds, we introduce the following notation.
\begin{dfn}
For every positive real number $\eta_m$, let
\begin{equation}
\begin{aligned}
\Omega_m^{\eta_m}&:=\Big\{\ x\in\Omega_m\ \Big|\ |\mathcal S_m(x)-\mathfrak c(m)|< \eta_m\ \Big\},\\
\widetilde\Omega_m^{\eta_m}&:=\Big\{\ x\in\Omega_m\ \Big|\ |\mathcal S_m(x)-\mathfrak c(m)|< \eta_m,\ 2T_0m< \mathcal E(x) \ \Big\}.\\
\end{aligned}
\end{equation}
Moreover, let $U_m''\subset U_m'$ be any open neighborhoods of $L_m$ inside $\bar\Omega_m$ such that $U_m'$ has a positive distance from $\bar\Omega_m\setminus U_m$, and $U_m''$ has a positive distance from $\bar\Omega_m\setminus U_m'$, see Lemma \ref{l:system}.
\end{dfn}
\begin{lem}\label{l:PS2}
For every $m\in(0,m_1]$ with $K_m=\varnothing$, there exists $\eta_m$ with $0<\eta_m<\min\{\delta m^2,1\}$ such that 
\begin{equation}
\begin{aligned}
(i)&\ \ \forall\,x\in\Omega_m^{\eta_m},\qquad\qquad\qquad\qquad\  \Vert\nabla \mathcal S_m(x)\Vert> \eta_m,\\
(ii)&\ \ \forall\,x\in\widetilde\Omega_m^{\eta_m}\setminus \bar U_m'',\qquad\nabla\mathcal S_m(x)\cdot\nabla\mathcal S_{2m}(x)>-(1-\eta_m)\Vert\nabla \mathcal S_m(x)\Vert\Vert\nabla\mathcal S_{2m}(x)\Vert.
\end{aligned}
\end{equation}
\end{lem}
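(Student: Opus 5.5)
We argue by contradiction in both items, producing sequences that violate the Palais--Smale characterization of Lemma \ref{l:PS}. Note first that on $\Omega_m$ the energy is pinched, $T_*m\leq\mathcal E(x)\leq Dm<d_g$ by Lemma \ref{l:inclusion} and Lemma \ref{l:D}(ii), so Lemma \ref{l:PS} applies with $a=T_*m$ and $b=Dm$. Since both conditions only become stronger as $\eta_m$ decreases (the sets $\Omega_m^{\eta_m}$, $\widetilde\Omega_m^{\eta_m}$ shrink and the inequalities tighten), it suffices to find a separate $\eta$ for each item and take the minimum, further shrunk below $\min\{\delta m^2,1\}$.

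For (i), suppose there are $x_k\in\Omega_m$ with $|\mathcal S_m(x_k)-\mathfrak c(m)|<1/k$ and $\Vert\nabla\mathcal S_m(x_k)\Vert\leq 1/k$. By Lemma \ref{l:PS} a subsequence converges to $x_*\in\mathrm{Crit}\,\mathcal S_m$. By continuity, $\mathcal S_m(x_*)=\mathfrak c(m)$, $\mathcal E(x_*)\geq T_*m$ and $\mathcal S_{2m}(x_*)\leq Dm^2$, so $x_*\in K_m$. This contradicts $K_m=\varnothing$.

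For (ii), suppose there are $x_k\in\widetilde\Omega_m^{1/k}\setminus\bar U_m''$ with
\[
\nabla\mathcal S_m(x_k)\cdot\nabla\mathcal S_{2m}(x_k)\leq-(1-\tfrac1k)\Vert\nabla\mathcal S_m(x_k)\Vert\Vert\nabla\mathcal S_{2m}(x_k)\Vert.
\]
Apply Lemma \ref{l:CS} with $a=1/k$, $u_0=\nabla\mathcal S_m(x_k)$ and $u_1=\nabla\mathcal S_{2m}(x_k)$. Since $\mathcal S_\mu=\mu\mathcal E-\mathcal B$ is affine in $\mu$, the combination $(1-\lambda_k)u_0+\lambda_ku_1$ equals $\nabla\mathcal S_{\mu_k}(x_k)$ with $\mu_k:=(1+\lambda_k)m\in[m,2m]$. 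Hence
\[
\Vert\nabla\mathcal S_{\mu_k}(x_k)\Vert\leq\sqrt{2/k}\,\Vert\nabla\mathcal S_{2m}(x_k)\Vert.
\]
The right-hand side tends to zero, because $\Vert\mathrm d\mathcal S_{2m}\Vert\leq 2m+\Vert\beta\Vert_\infty d_g$ by Lemma \ref{l:differentials}.

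Pass to a subsequence with $\mu_k\to\mu_*$. As in the proof of Lemma \ref{l:KL}, we have $\Vert\mathrm d\mathcal S_{\mu_*}-\mathrm d\mathcal S_{\mu_k}\Vert\leq|\mu_*-\mu_k|$, so $\Vert\mathrm d_{x_k}\mathcal S_{\mu_*}\Vert\to0$. Lemma \ref{l:PS} then gives $x_k\to x_*\in\mathrm{Crit}\,\mathcal S_{\mu_*}$. The limit satisfies $\mathcal E(x_*)\geq 2T_0m$, $\mathcal S_m(x_*)=\mathfrak c(m)$ and $\mathcal S_{2m}(x_*)\leq Dm^2$, so $x_*\in L_m$.

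Since $x_k\notin \bar U_m''$ and $U_m''$ is an open neighborhood of $L_m$ in $\bar\Omega_m$, the limit $x_*$ cannot lie in $U_m''$. This contradicts $x_*\in L_m\subset U_m''$.

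The main obstacle is this second step. One must check that the convex combination given by Lemma \ref{l:CS} really is the gradient of some $\mathcal S_\mu$ with $\mu\in[m,2m]$. This works because the inner product is the fixed one on $T_x\Omega$ and $\nabla$ is linear in the functional. One must also keep the factor $\Vert\nabla\mathcal S_{2m}\Vert$ uniformly bounded. The closedness of the constraints defining $L_m$ is used only in passing to the limit, exactly as in Lemma \ref{l:KL}.
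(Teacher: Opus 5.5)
Your proof is correct and matches the paper's argument essentially step by step. Item (i) is proved by contradiction using Lemma~\ref{l:PS} to produce a point of $K_m$, and item (ii) by contradiction using Lemma~\ref{l:CS}, the affine dependence of $\nabla\mathcal S_\mu$ on $\mu$, and Lemma~\ref{l:PS} for $\mathcal S_{\mu_*}$ to produce a point of $L_m\setminus U_m''$. One wording slip: as $\eta_m$ decreases, the required inequalities become weaker, not tighter, and this together with the shrinking of the sets is exactly why taking the minimum of the two values of $\eta$ is legitimate.
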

\begin{proof}
Suppose by contradiction that (i) is false. Then, there exists a sequence $(x_k)\subset\Omega_m$ such that
\begin{equation}
\mathcal S_m(x_k)\to \mathfrak c(m),\qquad\Vert \mathrm{d}_{x_k}\mathcal S_m\Vert\to 0.
\end{equation}
By Lemma \ref{l:PS}, up to a subsequence, $x_k\to x_*\in\mathrm{Crit}\,\mathcal S_m$ with $\mathcal S_m(x_*)=\mathfrak c(m)$. Thus, $x_*\in K_m$, which is a contradiction. 

Suppose by contradiction that (ii) is false. Then, there exist sequences $(a_k)\subset(0,\min\{\delta m^2,1\})$ and $(x_k)\subset\Omega_m\setminus \bar U_m''$ such that 
\begin{equation}
\begin{aligned}
(i)&\ \ \mathcal S_m(x_k)\to \mathfrak c(m),\\
(ii)&\ \ 2T_0m<\mathcal E(x_k),\\
(iii)& \ \ a_k\to 0,\\
(iv)&\ \ \nabla\mathcal S_m(x_k)\cdot\nabla\mathcal S_{2m}(x_k)\leq -(1-a_k)\Vert\nabla \mathcal S_m(x_k)\Vert\Vert\nabla\mathcal S_{2m}(x_k)\Vert.
\end{aligned}
\end{equation}
Applying Lemma \ref{l:CS} with $u_0=\nabla\mathcal S_m(x_k)$ and $u_1=\nabla\mathcal S_{2m}(x_k)$ and $a=a_k$, there exists a sequence $(\lambda_k)\in[0,1]$ such that 
\begin{equation}
\Vert\nabla\mathcal S_{\mu_k}(x_k)\Vert\leq \sqrt{2a_k}\min\big\{\Vert\nabla\mathcal S_m(x_k)\Vert,\Vert\nabla\mathcal S_{2m}(x_k)\Vert\big\},
\end{equation}
where
\begin{equation}
\mu_k:=(1-\lambda_k)m+\lambda_k(2m)\in[m,2m],
\end{equation}
and we have used that
\begin{equation}
(1-\lambda_k)\nabla \mathcal S_m(x_k)+\lambda_k\nabla\mathcal S_{2m}(x_k)=\nabla\mathcal S_{(1-\lambda_k)m+\lambda_k(2m)}(x_k)=\nabla\mathcal S_{\mu_k}(x_k).
\end{equation}
Since $\nabla\mathcal E$ and $\nabla\mathcal B$ are uniformly bounded on $\Omega_m$ due to Lemma \ref{l:differentials}, we deduce that, up to a subsequence, \begin{equation}
\mu_k\to\mu_*\in[m,2m],\qquad \Vert\nabla\mathcal S_{\mu_k}(x_k)\Vert\to 0.    
\end{equation}
Using again that $\nabla\mathcal E$ is bounded, we deduce that
\begin{equation}
\Vert\nabla\mathcal S_{\mu_*}(x_k)\Vert\to 0.
\end{equation}
By Lemma \ref{l:PS}, we conclude that, up to a subsequence,
\begin{equation}
x_k\to x_*\in(\mathrm{Crit}\,\mathcal S_{\mu_*})\setminus U_m'',\qquad \mathcal S_m(x_*)=\mathfrak c(m),\qquad 2T_0m\leq\mathcal E(x_*).
\end{equation}
Thus, $x_*\in L_m\setminus U_m''=\varnothing$, and we have reached a contradiction.
\end{proof}
\subsection{The Pseudogradient and its Flow}
Using the bounds on the gradients of $\mathcal S_m$ and $\mathcal S_{2m}$, we obtained in the last subsection from the Palais--Smale condition, we can build a pseudogradient for both $\mathcal S_m$ and $\mathcal S_{2m}$ by interpolation.
\begin{lem}\label{l:wm}
If $m\in(0,m_1]$ and $K_m=\varnothing$, then there exist a real number $\theta_m\in(0,\delta m^2)$ and a smooth and bounded vector field $W_m$ on $\Omega_m$ such that
\begin{enumerate}[(i)]
\item $\mathrm{d}_x\mathcal S_m[W_m]\leq 0$ for all $x\in \Omega_m$;
\item $\mathrm{d}_x\mathcal S_m[W_m]\leq -\theta_m$ for all $x\in\Omega_m^{\theta_m}\setminus U_m'$;
\item $\mathrm{d}_x\mathcal S_{2m}[W_m]\leq 0$ for all $x\in\Omega_m$ with $2T_1m\leq \mathcal E(x)$, and the inequality is strict if $W_m(x)\neq0$;
\item the flow of $W_m$ is forward-complete and its support is at a positive distance from $\Omega_m\cap\partial\Omega_m$.
\end{enumerate}
\end{lem}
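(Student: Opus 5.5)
We build $W_m$ by gluing, with a partition of unity, three local vector fields: $-\nabla\mathcal S_m$ in the region where $\mathcal E$ is small, a convex combination of $-\nabla\mathcal S_m$ and $-\nabla\mathcal S_{2m}$ where $\mathcal E$ is large, and then a cut-off that kills the field near $\Omega_m\cap\partial\Omega_m$ and near the neighborhood $U_m''$ of $L_m$. We let $\eta_m$ be the constant of Lemma \ref{l:PS2} and take $\theta_m$ to be a small multiple of $\min\{\eta_m^2,\eta_m\}$, to be fixed at the end.

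\textbf{Step 1: the interpolated field.} Fix a smooth function $\chi\colon\mathbb R\to[0,1]$ of $\mathcal E$ with $\chi=0$ for $\mathcal E\le 2T_0m$ and $\chi=1$ for $\mathcal E\ge 2T_1m$; this uses $T_1>T_0$. On $\{\mathcal E\le 2T_1m\}$ we set $W^0:=-\nabla\mathcal S_m$. On $\{\mathcal E>2T_0m\}$ we look for $W^1=-(1-\lambda)\nabla\mathcal S_m-\lambda\nabla\mathcal S_{2m}$ with $\lambda\in[0,1]$ chosen pointwise (and then smoothed) so that both $\mathrm d\mathcal S_m[W^1]<0$ and $\mathrm d\mathcal S_{2m}[W^1]<0$.

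Write $u_0=\nabla\mathcal S_m$, $u_1=\nabla\mathcal S_{2m}$. If $u_0\cdot u_1>-(1-\eta_m)\Vert u_0\Vert\Vert u_1\Vert$, the standard choice is the normalized bisector
\[
w=-\frac{u_0}{\Vert u_0\Vert}-\frac{u_1}{\Vert u_1\Vert}.
\]
It satisfies $u_0\cdot w\le-\Vert u_0\Vert\,\eta_m$ and, symmetrically, $u_1\cdot w\le-\Vert u_1\Vert\,\eta_m$, using the angle bound. By Lemma \ref{l:PS2}(ii) this holds on $\widetilde\Omega_m^{\eta_m}\setminus\bar U_m''$. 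Away from the level $\mathfrak c(m)$ we need not care about strict decrease of $\mathcal S_m$. There we still need $\mathrm d\mathcal S_{2m}[W_m]\le0$, with strict inequality when $W_m\ne0$, in the region $\mathcal E\ge 2T_1m$. Since the angle condition may fail there, we take $W_m:=-\nabla\mathcal S_{2m}$ multiplied by a cutoff in $|\mathcal S_m-\mathfrak c(m)|$, arranged so that $\mathcal S_m$ still does not increase.

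Concretely, on $\mathcal E\ge 2T_1m$ we use
\[
W=-\phi\,(\text{bisector})-(1-\phi)\,\psi\,\nabla\mathcal S_{2m},
\]
where $\phi$ is supported in $\{|\mathcal S_m-\mathfrak c|<\eta_m\}$ and equal to $1$ on $\{|\mathcal S_m-\mathfrak c|<\eta_m/2\}$. The coefficient $\psi$ is chosen so that $\mathcal S_m$ does not increase: $\psi=0$ wherever $\nabla\mathcal S_m\cdot\nabla\mathcal S_{2m}<0$. This is made smooth via $\psi=h(\nabla\mathcal S_m\cdot\nabla\mathcal S_{2m})$ with $h$ vanishing on $(-\infty,0]$ and positive on $(0,\infty)$. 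Then (iii) holds, including strictness, because each nonzero summand strictly decreases $\mathcal S_{2m}$. Property (i) holds because each summand does not increase $\mathcal S_m$. We interpolate with $-\nabla\mathcal S_m$ on $\{2T_0m<\mathcal E<2T_1m\}$, where (iii) is not required; there each component decreases $\mathcal S_m$.

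\textbf{Step 2: quantitative decrease.} Near the level, on $\Omega_m^{\theta_m}\setminus U_m'$, the field is either $-\nabla\mathcal S_m$, giving decrease at least $\eta_m^2$ by Lemma \ref{l:PS2}(i), or a convex combination involving the bisector. The bisector decreases $\mathcal S_m$ by at least $\eta_m\Vert u_0\Vert\ge\eta_m^2$, provided the point lies in $\widetilde\Omega_m^{\eta_m}\setminus\bar U_m''$. The final cutoff near $L_m$ must equal $1$ outside $U_m'$ and vanish on $U_m''$; such a cutoff exists because these sets are at positive distance, by the definition of $U_m'$ and $U_m''$. To keep the field smooth, all pointwise cutoffs are taken to be Lipschitz functions of the distance, and then smoothed with a locally finite partition of unity.

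\textbf{Step 3: boundedness, completeness, and support.} Boundedness follows from Lemma \ref{l:differentials}, which gives uniform bounds on $\nabla\mathcal E$ and $\nabla\mathcal B$; normalized bisectors have norm at most $2$. We multiply by a final cutoff vanishing in a neighborhood of $\{\mathcal E=T_*m\}\cup\{\mathcal S_m=0\}$ at positive distance. Near the level $\mathfrak c(m)>\delta m^2>\theta_m$ this cutoff is $1$, by Lemma \ref{l:lowerc}(iii). This gives the support claim in (iv).

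Forward invariance of $\Omega_m$ works as follows. $\mathcal S_m$ does not increase, and it cannot reach $0$ because the field is cut off there. $\mathcal E\ge T_*m$ is preserved because the field vanishes near that boundary. For $\mathcal S_{2m}<Dm^2$: on $\{\mathcal E\ge2T_1m\}$ it does not increase by (iii), while on $\{\mathcal E\le 2T_1m\}$ Lemma \ref{l:D}(i) gives $\mathcal S_{2m}\le\tfrac12Dm^2$. Crossing between these regions therefore never reaches $Dm^2$. A bounded smooth field on a set it leaves invariant and at positive distance from the boundary has a forward-complete flow, using completeness of $\Omega$ near $\bar\Omega_m\subset\{\mathcal E\le Dm<d_g\}$.

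\textbf{Main obstacle.} The delicate point is ensuring (iii) globally on $\{\mathcal E\ge2T_1m\}$ together with (i) everywhere, including far from the level set where Lemma \ref{l:PS2} gives no angle control. We address this with the $\psi$-trick above: use only $-\nabla\mathcal S_{2m}$ where it also decreases $\mathcal S_m$, and zero elsewhere. We also check that this does not destroy (ii), since $\phi=1$ on $\Omega_m^{\theta_m}$ once $\theta_m<\eta_m/2$.
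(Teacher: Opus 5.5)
Your construction is correct, and its core is the paper's. Near the critical band, the angle bound of Lemma~\ref{l:PS2}(ii) makes a positive combination of $-\nabla\mathcal S_m$ and $-\nabla\mathcal S_{2m}$ decrease both actions. Indeed, the paper's field $-\rho\big(\nabla\mathcal S_m+\chi\frac{\Vert\nabla\mathcal S_m\Vert}{\Vert\nabla\mathcal S_{2m}\Vert}\nabla\mathcal S_{2m}\big)$ is, when $\chi=1$, exactly $\rho\Vert\nabla\mathcal S_m\Vert$ times your normalized bisector. As in your proposal, $L_m$ is handled by a cutoff equal to $1$ off $U_m'$ and $0$ on $\bar U_m''$. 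Invariance of $\{\mathcal S_{2m}<Dm^2\}$ comes from Lemma~\ref{l:D}(i) together with (iii).

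The real difference is what happens off the band. The paper's cutoff $\rho$ also vanishes wherever $|\mathcal S_m-\mathfrak c(m)|\geq\eta_m$, so $W_m=0$ there. Consequences:
\begin{itemize}
\item (i) and (iii) hold trivially away from the level.
\item The ``main obstacle'' you identify never arises, so neither your $\psi=h(\nabla\mathcal S_m\cdot\nabla\mathcal S_{2m})$ device nor a global $-\nabla\mathcal S_m$ is needed.
\item Since $\eta_m<\delta m^2$, the support of $W_m$ lies in $\{\mathcal S_m\geq\mathfrak c(m)-\delta m^2\}$. By Lemma~\ref{l:lowerc} and the gradient bounds of Lemma~\ref{l:differentials}, this set is already at positive distance from $\Omega_m\cap\partial\Omega_m$. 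So the support claim in (iv) comes for free, without your extra cutoff near $\{\mathcal E=T_*m\}\cup\{\mathcal S_m=0\}$.
\end{itemize}
Your version does yield a field that does not increase $\mathcal S_m$ anywhere and strictly decreases $\mathcal S_{2m}$ at high energy wherever it is nonzero. Nothing downstream (Lemma~\ref{l:phi}) uses this, so the paper's localization is simply the more economical route.

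One small technical point: the paper lets its energy cutoff vanish on $\{\mathcal E\leq 2T_2m\}$ for an intermediate $T_2\in(T_0,T_1)$. This keeps the normalized $\nabla\mathcal S_{2m}$-term away from $\{\mathcal E=2T_0m\}$, where Lemma~\ref{l:PS2}(ii) gives no information and $\nabla\mathcal S_{2m}$ may vanish. Your $\chi$ is positive right up to $\mathcal E=2T_0m$, so $u_1/\Vert u_1\Vert$ can fail to be defined or Lipschitz at the edge of its support. Shift the lower threshold of $\chi$ to such a $2T_2m$.
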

\begin{proof}
Let $\eta_m\in\big(0,\min\{\delta m^2,1\}\big)$ be given by Lemma \ref{l:PS}. Take any real numbers
\begin{equation}
\eta_m'\in(0,\eta_m),\qquad T_2\in(T_0,T_1).
\end{equation}
Let $\chi\colon\Omega_m\to[0,1]$ be any smooth function that satisfies for all $x\in\Omega_m$
\begin{equation}\label{e:chi}
\begin{aligned}
\mathcal E(x)\leq 2T_2m\quad&\Longrightarrow\quad \chi(x)=0,\\
2T_1m\leq \mathcal E(x)\quad& \Longrightarrow\quad\chi(x)> 1-\eta_m.
\end{aligned}
\end{equation}
This function exists since $1-\eta_m\in(0,1)$. Let $\rho\colon \Omega_m\to[0,1]$ be any smooth function that satisfies
\begin{equation}\label{e:rho}
\begin{aligned}
|\mathcal S_m(x)-\mathfrak c(m)|\geq\eta_m\quad&\Longrightarrow\quad \rho(x)=0,\\
x\in \bar U_m''\quad&\Longrightarrow\quad \rho(x)=0,\\
x\in\Omega^{\eta_m'}\setminus U_m'\quad& \Longrightarrow\quad\rho(x)=1.
\end{aligned}
\end{equation}
This function exists since $\bar U_m''\cup\{|\mathcal S_m(x)-\mathfrak c(m)|\geq\eta_m\}$ is disjoint from the closure of $\Omega^{\eta_m'}\setminus U_m'$.

For all $x\in\Omega_m$, we define
\begin{equation}
W_m(x):=\begin{cases}
-\rho(x)\Big(\nabla \mathcal S_m(x)+\chi(x)\frac{\Vert\nabla\mathcal S_m(x)\Vert}{\Vert\nabla\mathcal S_{2m}(x)\Vert}\nabla \mathcal S_{2m}(x)\Big),&\text{if }\nabla \mathcal S_{2m}(x)\neq0,\\
-\rho(x)\nabla \mathcal S_m(x),& \text{if }\mathcal E(x)<2T_2m,\\
0,&\text{if }x\in U_m''.
\end{cases}
\end{equation}
This is a good definition. Indeed, 
\begin{itemize}
    \item the three conditions cover the whole $\Omega_m$ since $T_0<T_2$ and $U_m''$ is a neighborhood of $L_m$;
    \item the three definitions coincide in the intersection of the three cases by \eqref{e:chi} and \eqref{e:rho}.
\end{itemize}
Moreover, since the three conditions are open, the vector field is smooth. Finally, the vector field is bounded by Lemma \ref{l:differentials}.

Let $x\in\Omega_m$ and let us estimate $\mathrm{d}_x\mathcal S_m[-W_m]$ from below. If $\rho(x)=0$, we have
\begin{equation}
\mathrm{d}_x\mathcal S_m[-W_m]=0.
\end{equation}
If $\rho(x)\neq0$, we distinguish two cases that cover all possibilities
\begin{equation}
\text{Case 1:}\quad  \mathcal E(x)<2T_2m;\qquad\qquad \text{Case 2:}\quad  x\in\widetilde\Omega^{\eta_m}\setminus \bar U''_m.
\end{equation}

In Case 1, Lemma \ref{l:PS2}.(i) implies
\begin{equation}\label{e:w1}
\mathrm{d}_x\mathcal S_m[-W_m]=\rho(x)\Vert\nabla\mathcal S_m(x)\Vert^2\geq \rho(x)\eta_m^2.
\end{equation}
In particular, if $x\in\Omega^{\eta_m'}\setminus U_m'$ also holds, then we get by the definition of $\rho$ in \eqref{e:rho}
\begin{equation}\label{e:w2}
\mathrm{d}_x\mathcal S_m[-W_m]\geq\eta_m^2.    
\end{equation}

In Case 2, we have $\nabla\mathcal S_{2m}(x)\neq0$ since $L_m\subset U_m''$. Then, Lemma \ref{l:PS2}.(i)-(ii) implies
\begin{equation}\label{e:w3}
\begin{aligned}
\mathrm{d}_x\mathcal S_m[-W_m]&=\rho(x)\Big(\Vert \nabla \mathcal S_m(x)\Vert^2+\chi(x)\frac{\Vert\nabla\mathcal S_m(x)\Vert}{\Vert\nabla\mathcal S_{2m}(x)\Vert}\nabla\mathcal S_m(x)\cdot\nabla \mathcal S_{2m}(x)\Big)\\
&\geq \rho(x)\big(1-(1-\eta_m)\chi(x)\big)\Vert\nabla\mathcal S_m(x)\Vert^2\\
&\geq \rho(x)\big(1-(1-\eta_m)\cdot 1\big)\eta_m^2\\
&\geq \rho(x)\eta_m^3.
\end{aligned}
\end{equation}
In particular, if $x\in\Omega^{\eta_m'}\setminus U_m'$ also holds, then by the definition of $\rho$ in \eqref{e:rho}
\begin{equation}\label{e:w4}
\mathrm{d}_x\mathcal S_m[-W_m]\geq \eta_m^3.
\end{equation}
We deduce that $W_m$ satisfies property (i) of the current lemma. Moreover, letting
\begin{equation}
\theta_m:=\min\big\{\eta_m',\eta_m^3\big\}
\end{equation}
the vector field $W_m$ also satisfies property (ii) of the current lemma. This follows since $\rho(x)\neq0$ if $x\in\Omega^{\theta_m}\setminus U_m'$ and, therefore, we can use \eqref{e:w2} and \eqref{e:w4}.

Let us now estimate $\mathrm{d}_x\mathcal S_{2m}[-W_m]$ from below for all $x\in\Omega_m$ such that $2T_1m\leq\mathcal E(x)$. If $\rho(x)=0$, then
\begin{equation}
\mathrm{d}_x\mathcal S_{2m}[-W_m]=0.
\end{equation}
If $\rho(x)\neq0$, it follows that $x\in\widetilde\Omega^{\eta_m}\setminus\bar U''_m$ since $T_0\leq T_1$. Thus $\nabla \mathcal S_m(x)\neq0$ and $\nabla\mathcal S_{2m}(x)\neq0$ and 
\begin{equation}\label{e:w5}
\begin{aligned}
d_x\mathcal S_{2m}[-W_m]&=\rho(x)\Big(\nabla\mathcal S_{2m}(x)\cdot\nabla \mathcal S_m(x)+\chi(x)\Vert\nabla\mathcal S_m(x)\Vert\Vert\nabla\mathcal S_{2m}(x)\Vert\Big)\\
&> \rho(x)\big(\chi(x)-(1-\eta_m)\big)\Vert\nabla\mathcal S_m(x)\Vert \Vert\nabla\mathcal S_{2m}(x)\Vert\\
&>0,
\end{aligned}
\end{equation}
where we used Lemma \ref{l:PS2}.(ii) and \eqref{e:chi}. Thus, we also get property (iii) of the current lemma.

It remains to show property (iv) about the support of $W_m$ and of the forward completeness of its flow. To this end, we preliminarily observe that since $\eta<\delta m^2$, we have the implication
\begin{equation}
\mathcal S_m(x)<\mathfrak c(m)-\delta m^2\quad \Longrightarrow\quad \rho(x)=0.
\end{equation}
The set $\big\{\mathcal S_m(x)\geq \mathfrak c(m)-\delta m^2\big\}$ is at a positive distance from $\Omega_m\cap\partial\Omega_m$ by \eqref{e:partial}, Lemma \ref{l:lowerc}.(ii)-(iii), and the fact that the gradients of $\mathcal S_m$ and $\mathcal E$ are bounded, see Lemma \ref{l:differentials}. Therefore, we conclude that the support of $W_m$ is at a positive distance from $\Omega_m\cap\partial\Omega_m$. 

In particular, if $x_0\in\Omega_m$ and $x\colon [0,r_+)\to \Omega$ is the maximal flow line of $W_m$ such that $x(0)=x_0$, then $r_+\in(0,+\infty]$. We need to show that $r_+=\infty$. Suppose, by contradiction, that $r_+<\infty$. In this case, $x$ is not constant. Moreover, since $\bar\Omega_m$ is a complete metric space and $W_m$ is bounded, it follows that 
\begin{equation}
\exists\, x_*\in\partial\Omega_m\setminus\Omega_m,\quad \lim_{r\to r_+}x(r)=x_*.   
\end{equation}
By \eqref{e:partial}, we deduce that $\mathcal S_{2m}(x_*)=Dm^2$. Therefore, by Lemma \ref{l:D}.(i)
\begin{equation}
\exists\,r_-\in[0,r_+),\quad 2T_1 m< \mathcal E(x(r)),\quad \forall\,r\in(r_-,r_+).     
\end{equation}
By property (iii) of the present lemma, we deduce that $r\mapsto \mathcal S_{2m}(x(r))$ is strictly decreasing for $r\in(r_-,r_+)$. This is a contradiction, since $\mathcal S_{2m}(x)<Dm^2$ for all $x\in\Omega_m$. The proof of property (iv) is complete.
\end{proof}
Now that we have constructed the pseudogradient $W_m$, we use its flow to decrease the action.
\begin{lem}\label{l:phi}
For every $m\in(0,m_1]$ with $K_m=\varnothing$, there exist
\begin{enumerate}[(a)]
\item a real number $\hat{\theta}_m\in(0,\theta_m]$,
\item a smooth diffeomorphism $\varphi_m\colon \Omega_m\to \Omega_m$, 
\end{enumerate}
with the following properties
\begin{enumerate}[(i)]
\item the support of $\varphi_m$ is at a positive distance from $\Omega_m\cap\partial\Omega_m$;
\item $\mathcal S_m(\varphi_m(x))\leq \mathcal S_m(x)$ for all $x\in\Omega_m$;
    \item $\mathcal S_m(\varphi_m(x))<\mathfrak c(m)$ for all $x\in \Omega_m\setminus \varphi_m^{-1}(U_m)$ with $\mathcal S_m(x)<\mathfrak c(m)+\hat\theta_m$.
\end{enumerate}
\end{lem}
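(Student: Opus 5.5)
We take $\varphi_m:=\Phi^{\tau}$, the time-$\tau$ map of the flow $\Phi^r$ of the vector field $W_m$ from Lemma \ref{l:wm}, for a time $\tau>0$ fixed below. By Lemma \ref{l:wm}.(iv) the flow is forward-complete on $\Omega_m$. Its support stays at positive distance from $\Omega_m\cap\partial\Omega_m$, so near that boundary the flow is the identity. Hence $\Phi^\tau$ is a smooth embedding $\Omega_m\to\Omega_m$, and property (i) holds. Property (ii) follows from Lemma \ref{l:wm}.(i): $\mathcal S_m$ is non-increasing along flow lines.

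Ideally $\varphi_m$ should also be a diffeomorphism onto $\Omega_m$. For that we need the backward flow to exist up to time $\tau$, which is not given by Lemma \ref{l:wm}. We handle this by cutting $W_m$ off. Multiply $W_m$ by a smooth function equal to $1$ on $\{\mathcal S_m\le \mathfrak c(m)+\delta m^2\}$ and vanishing where $\mathcal S_{2m}$ is close to $Dm^2$. Since $W_m$ is bounded, a flow line needs a definite amount of time to travel from one region to the other. This cutoff does not affect properties (i)--(iii) of Lemma \ref{l:wm} on the region that matters, where $\rho\neq0$. If this modification turns out to be delicate, we will instead settle for $\varphi_m$ being a smooth embedding preserving $\Omega_m$, which is all the later argument uses.

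The key step is (iii). Set $\hat\theta_m:=\theta_m/2$ and $\tau:=2$. Take $x\in\Omega_m$ with $\mathcal S_m(x)<\mathfrak c(m)+\hat\theta_m$ and $\varphi_m(x)\notin U_m$, and suppose by contradiction that $\mathcal S_m(\varphi_m(x))\ge\mathfrak c(m)$. By monotonicity, $\mathfrak c(m)\le \mathcal S_m(\Phi^r(x))<\mathfrak c(m)+\hat\theta_m$ for all $r\in[0,\tau]$, so the whole orbit lies in $\Omega_m^{\theta_m}$. If the orbit also stayed outside $U_m'$, Lemma \ref{l:wm}.(ii) would give
\[
\mathcal S_m(\Phi^\tau(x))\le \mathcal S_m(x)-\tau\theta_m<\mathfrak c(m)+\hat\theta_m-2\theta_m<\mathfrak c(m),
\]
which is a contradiction. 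So the orbit must enter $U_m'$ at some time $r_0\in[0,\tau]$ and then end outside $U_m\supset U_m'$ at time $\tau$.

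This crossing case is the main obstacle. The plan is to use that $U_m'$ has positive distance $d'$ from $\bar\Omega_m\setminus U_m$, and that $\Vert W_m\Vert\le M$. Crossing from $U_m'$ to the outside of $U_m$ then takes time at least $d'/M$. During the last $d'/(2M)$ of that time the orbit is outside $U_m'$; to make this precise we choose $U_m'$ deep inside $U_m$, using the nested system of Lemma \ref{l:system}. So on an interval of length at least $d'/(2M)$ the orbit lies in $\Omega_m^{\theta_m}\setminus U_m'$, and the action drops by at least $\theta_m d'/(2M)$. Accordingly we choose
\[
\hat\theta_m:=\min\Big\{\frac{\theta_m}{2},\ \frac{\theta_m d'}{4M}\Big\},\qquad \tau:=2.
\]
With this choice the action also falls below $\mathfrak c(m)$ in the crossing case, again a contradiction, and (iii) follows. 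Finally, $\hat\theta_m\le\theta_m$, as required.
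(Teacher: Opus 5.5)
Your argument is correct. It rests on the same ingredients as the paper's proof: the forward flow of $W_m$, Lemma \ref{l:wm}.(i)--(ii), the bound $\Vert W_m\Vert\le M$, and $d'=\mathrm{dist}(U_m',\bar\Omega_m\setminus U_m)>0$. Where it differs is in how step (iii) is organized.

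The paper takes a \emph{short} flow time. It fixes $t_1\in(0,1]$ with $\Phi_t(U_m')\subset U_m$ for all $t\in[0,t_1]$, so an orbit that meets $U_m'$ before time $t_1$ must end in $U_m$. Under the hypotheses of (iii) the whole orbit therefore avoids $U_m'$, and a single estimate gives the claim with $\hat\theta_m=\theta_m t_1$.

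You instead fix $\tau=2$ and treat the crossing case separately. Your final-stretch step is easier to justify directly than through Lemma \ref{l:system}. For $s\in[\tau-d'/(2M),\tau]$ you have $\mathrm{dist}(\Phi^s(x),\Phi^\tau(x))\le M(\tau-s)\le d'/2$, while $\Phi^\tau(x)\in\bar\Omega_m\setminus U_m$, hence $\Phi^s(x)\notin U_m'$. In the crossing case $\tau\ge d'/M$, so this interval lies in $[0,\tau]$. No re-choice of $U_m'$ is needed, which is just as well, since $\theta_m$ and $W_m$ were built from the given $U_m'$.

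Your version shows that any flow time works, at the cost of a case distinction. The paper's short time removes the crossing case altogether. The two values of $\hat\theta_m$ are of the same order, roughly $\theta_m\min\{1,d'/M\}$.

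You should drop the cutoff digression. A function equal to $1$ on $\{\mathcal S_m\le\mathfrak c(m)+\delta m^2\}$ and vanishing where $\mathcal S_{2m}$ is close to $Dm^2$ need not exist. Nothing in the definition of $\Omega_m$ prevents $\mathcal S_m\approx\mathfrak c(m)$ together with $\mathcal S_{2m}\approx Dm^2$; this only requires $\mathcal E\approx Dm-\mathfrak c(m)/m\le Dm$. Where these sets meet, damping $W_m$ would destroy Lemma \ref{l:wm}.(ii) at points of level close to $\mathfrak c(m)$ with $\rho\neq0$. Such points would then be fixed at that level outside $U_m$, violating (iii). So your claim that the cutoff does not affect Lemma \ref{l:wm} where $\rho\neq0$ is not justified.

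Your fallback is the right statement: the forward time-$\tau$ map is a smooth embedding of $\Omega_m$ into itself. The paper's $\varphi_m=\Phi_{t_1}$ is likewise obtained from forward completeness alone. An embedding is also all that Lemma \ref{l:tildesigma} uses, since $\varphi_m^{-1}(\hat L_m)$ remains a finite union of codimension-$2$ submanifolds.
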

\begin{proof}
Denote by $\{\Phi_t\}_{t\geq 0}$ the flow of $W_m$. Since $U_m'$ has a positive distance from $\bar\Omega_m\setminus U_m$ and the vector field $W_m$ is bounded by Lemma \ref{l:wm}, we deduce that 
\begin{equation}\label{e:t0}
\exists\,t_1\in(0,1],\quad \Phi_t(U_m')\subset U_m,\quad \forall\,t\in[0,t_1].
\end{equation}
By Lemma \ref{l:wm}.(iv), $W_m$ is forward-complete on $\Omega_m$ and its support has positive distance from $\Omega_m\cap\partial\Omega_m$. Hence, the diffeomorphism $\Phi_{t_1}\colon \Omega_m\to\Omega_m$ is well-defined and its support has a positive distance from $\Omega_m\cap\partial\Omega_m$.

We let
\begin{equation}
\begin{aligned}
\hat\theta_m:=\theta_m t_1,\qquad \varphi_m:=\Phi_{t_1}\colon \Omega_m\to\Omega_m.    
\end{aligned}
\end{equation}
Thus, property (i) of the present lemma is satisfied. Moreover, by Lemma \ref{l:wm}.(i), property (ii) of the present lemma is satisfied as well.

We are left with proving property (iii). Let $x\in\Omega_m\setminus \varphi_m^{-1}(U_m)$ with $\mathcal S_m(x)<\mathfrak c(m)+\hat\theta_m$. Suppose by contradiction that $\mathcal S_m(\varphi_m(x))\geq \mathfrak c(m)$. By Lemma \ref{l:wm}.(i), it follows that
\begin{equation}\label{e:window}
\mathcal S_m(\Phi_t(x))\in[\mathfrak c(m),\mathfrak c(m)+\hat\theta_m),\qquad\forall\,t\in[0,t_1].
\end{equation}
Since $\hat\theta_m\leq\theta_m$, we get
\begin{equation}\label{e:flowline1}
\Phi_t(x)\in\Omega_m^{\theta_m},\qquad\forall\,t\in[0,t_1].
\end{equation}
Moreover,
\begin{equation}\label{e:flowline2}
\Phi_t(x)\notin \Omega_m\cap U_m',\qquad \forall\, t\in[0,t_1]. 
\end{equation}
Indeed, if, by contradiction, $\Phi_{t_*}(x)\in\Omega_m\cap U_m'$ for some $t_*\in[0,t_1]$, then
\begin{equation}
\varphi_m(x)=\Phi_{t_1}(x)=\Phi_{t_1-t_*}(\Phi_{t_*}(x))\in \Omega_m\cap U_m
\end{equation}
by the flow property and letting $t=t_1-t_*$ in \eqref{e:t0}. This contradicts the fact that $x\notin\varphi_m^{-1}(U_m)$ and establishes \eqref{e:flowline2}. Using \eqref{e:flowline1} and \eqref{e:flowline2} together with property (ii) of Lemma \ref{l:wm}, we get
\begin{equation}
\mathcal S_m(\varphi_m(x))=\mathcal S_m(\Phi_{t_1}(x))=\mathcal S_m(x)+\int_0^{t_1}\mathrm{d}_{\Phi_t(x)}\mathcal S_m[W_m]\mathrm{d}t<\big(\mathfrak c(m)+\hat\theta_m\big)-\theta_mt_1=\mathfrak c(m).
\end{equation}
which contradicts \eqref{e:window} for $t=t_1$.
\end{proof}
\subsection{Combining the Local and Global Deformations: End of the Proof of Theorem \ref{t:D}}
Combining the local deformation $\hat\sigma_m$ from Lemma \ref{l:sigma} and the global deformation $\varphi_m$ from Lemma \ref{l:phi}, we let the action decrease in a complement of a subset of codimension $2$.
\begin{lem}\label{l:tildesigma}
For every $m\in(0,m_1]$ with $K_m=\varnothing$, there exist
\begin{enumerate}[(a)]
\item a real number $\tilde{\gamma}_m>0$,
\item a set $\tilde L_m\subset\bar\Omega_m$,
\item a smooth map $\tilde\sigma_m\colon \Omega_m\setminus \tilde L_m\to \Omega_m$, 
\end{enumerate}
with the following properties
\begin{enumerate}[(i)]
    \item the set $\tilde L_m$ is a finite union of embedded submanifolds of $\bar\Omega_m$ of codimension $2$;
    \item the set $\tilde L_m$ is at a positive distance from $\Omega_m\cap\partial\Omega_m$;
    \item the support of the map $\tilde\sigma_m$ is at a positive distance from $\Omega_m\cap\partial\Omega_m$;
    \item $\mathcal S_m(\tilde\sigma_m(x))<\mathfrak c(m)$ for all $x\in \Omega_m\setminus \tilde L_m$ with $\mathcal S_m(x)<\mathfrak c(m)+\tilde\gamma_m$. \end{enumerate}
\end{lem}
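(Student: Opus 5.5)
Define $\tilde\sigma_m:=\hat\sigma_m\circ\varphi_m$, with $\hat\sigma_m$ the slice deformation of Lemma \ref{l:sigma} and $\varphi_m$ the diffeomorphism of Lemma \ref{l:phi}. Its natural domain is $\varphi_m^{-1}(\Omega_m\setminus\hat L_m)$, by the composition rule \eqref{e:domain}. Accordingly, set
\[
\tilde L_m:=\varphi_m^{-1}(\hat L_m\cap\Omega_m),\qquad \tilde\gamma_m:=\min\{\hat\theta_m,\hat\gamma_m\}.
\]

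\textbf{Properties (i)--(iii).} Since $\varphi_m$ is a diffeomorphism of $\Omega_m$, the preimage of a finite union of codimension-$2$ embedded submanifolds is again such a union, which gives (i). To pass to the closure $\bar\Omega_m$, I would use that $\varphi_m$ is the identity near $\Omega_m\cap\partial\Omega_m$, so nothing changes in a collar of the boundary.

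For (ii), $\hat L_m$ stays at positive distance from $\Omega_m\cap\partial\Omega_m$. The time-$t_1$ map of a bounded vector field moves points by at most $t_1\sup\Vert W_m\Vert$. Moreover, the support of $\varphi_m$ is itself at positive distance from the boundary. Together these show that $\varphi_m^{-1}(\hat L_m)$ is at positive distance from the boundary.

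For (iii), the support of the composition lies in $\mathrm{supp}\,\varphi_m\cup\varphi_m^{-1}(\mathrm{supp}\,\hat\sigma_m)$. The same reasoning applies to each of these two sets.

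\textbf{Property (iv).} Take $x\in\Omega_m\setminus\tilde L_m$ with $\mathcal S_m(x)<\mathfrak c(m)+\tilde\gamma_m$, and put $y:=\varphi_m(x)$. There are two cases.
\begin{itemize}
\item If $y\notin U_m$, then $x\notin\varphi_m^{-1}(U_m)$. Since $\tilde\gamma_m\le\hat\theta_m$, Lemma \ref{l:phi}(iii) gives $\mathcal S_m(y)<\mathfrak c(m)$. Lemma \ref{l:sigma}(iv) then gives $\mathcal S_m(\hat\sigma_m(y))\le\mathcal S_m(y)<\mathfrak c(m)$.
\item If $y\in U_m\cap\Omega_m$, then $y\notin\hat L_m$. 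Lemma \ref{l:sigma}(v) and Lemma \ref{l:phi}(ii) give
\[
\mathcal S_m(\hat\sigma_m(y))\le\mathcal S_m(y)-\hat\gamma_m\le\mathcal S_m(x)-\hat\gamma_m<\mathfrak c(m)+\tilde\gamma_m-\hat\gamma_m\le\mathfrak c(m).
\]
\end{itemize}

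\textbf{Main obstacle.} The only delicate point is the bookkeeping of distances to $\Omega_m\cap\partial\Omega_m$ in (ii)--(iii). In particular, it must be checked that $\varphi_m^{-1}$ cannot bring $\hat L_m$ arbitrarily close to the boundary. I would handle this with the bounded displacement of the flow $\Phi_t$ of $W_m$ over time $t_1$ and the positive-distance support statement in Lemma \ref{l:wm}(iv): points near the boundary are fixed by $\varphi_m$. Hence if $\varphi_m(z)\in\hat L_m$, then either $z$ lies in the support of $\varphi_m$ or $z\in\hat L_m$, and both sets are far from the boundary. The same dichotomy yields (iii).
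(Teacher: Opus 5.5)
Your proposal is correct and follows the paper's proof essentially verbatim. You use the same choices $\tilde\gamma_m=\min\{\hat\gamma_m,\hat\theta_m\}$, $\tilde L_m=\varphi_m^{-1}(\hat L_m)$ and $\tilde\sigma_m=\hat\sigma_m\circ\varphi_m$, and the same two-case argument for (iv) according to whether $\varphi_m(x)$ lies in $U_m$. Your dichotomy for (ii)--(iii), that points outside the support of $\varphi_m$ are fixed, simply spells out what the paper leaves implicit.
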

\begin{proof}
We let
\begin{equation}\label{e:tildes}
\begin{aligned}
\tilde\gamma_m&:=\min\{\hat\gamma_m,\hat\theta_m\},\\
\tilde L_m&:=\varphi_m^{-1}(\hat L_m),\\ \tilde\sigma_m&:=\hat\sigma_m\circ\varphi_m\colon \Omega_m\setminus \tilde L_m\to\Omega_m.    
\end{aligned}
\end{equation}
By the definition of $\hat \sigma_m$ in Lemma \ref{l:sigma} and identity \eqref{e:domain}, the map $\tilde\sigma_m$ is well-defined.
By Lemma \ref{l:sigma}, the set $\tilde L_m$ is a finite union of embedded submanifolds of $\bar\Omega_m$ of codimension $2$, thus proving property (i) of the present lemma. Since $\hat L_m$ and the supports of $\varphi_m$ and $\hat\sigma_m$ are at a positive distance from $\Omega_m\cap\partial\Omega_m$, properties (ii) and (iii) of the present lemma follow.

We are left with proving property (iv). Let $x\in\Omega_m\setminus \tilde L_m$ be such that 
\begin{equation}
\mathcal S_m(x)<\mathfrak c(m)+\tilde\gamma_m.
\end{equation}
We have two cases. In the first case, we assume that $x\in\Omega_m\setminus \varphi_m^{-1}(U_m)$. In this case, since $\tilde\gamma_m\leq\hat\theta_m$, we can apply Lemma \ref{l:phi}.(iv) and get
\begin{equation}
\mathcal S_m(\tilde\sigma_m(x))=\mathcal S_m(\hat\sigma_m(\varphi_m(x)))\leq \mathcal S_m(\varphi_m(x))<\mathfrak c(m), 
\end{equation}
where we have used Lemma \ref{l:sigma}.(iv). 
In the second case, we assume that
$\varphi_m(x)\in (\Omega_m\cap U_m)$. By the definition \eqref{e:tildes} of $\tilde L_m$, it also follows that $\varphi_m(x)\notin\hat L_m$. Then by property (v) in Lemma \ref{l:sigma}, we get
\begin{equation}
\mathcal S_m(\tilde\sigma_m(x))=\mathcal S_m\big(\hat\sigma_m(\varphi_m(x))\big)\leq \mathcal S_m(\varphi_m(x))-\hat\gamma_m<\mathfrak c(m)+\tilde\gamma_m-\hat\gamma_m\leq \mathfrak c(m),
\end{equation}
where the second inequality follows from property (i) in Lemma \ref{l:wm}, and the last inequality from $\tilde\gamma_m\leq\hat\gamma_m$, see definition \eqref{e:tildes} of $\tilde\gamma_m$.
\end{proof}
We now have all the ingredients to prove Theorem \ref{t:D}.
\begin{proof}[Proof of Theorem \ref{t:D}]
By \eqref{e:implication}, it is enough to show that for all $m\in(0,m_1]$, the set $K_m$ is non-empty. Assume, by contradiction, that $K_m=\varnothing$ for some $m\in(0,m_1]$. By the definition of the minimax value $\mathfrak c(m)$ given in \eqref{e:cm}, there exists $x\subset X_m$ such that 
\begin{equation}\label{e:smyr}
\mathcal S_m(x(r))< \mathfrak c(m)+\tilde\gamma_m,\qquad\forall\,r\in[0,1].
\end{equation}
By properties (i) and (ii) in Lemma \ref{l:tildesigma}, $\tilde L_m\subset \bar\Omega_m$ is a finite union of embedded submanifolds of codimension two and $\tilde L_m$ lies at a positive distance from the set $\Omega_m\cap\partial\Omega_m$, see \eqref{e:partial}. Since the domain of $x$ is the interval $[0,1]$, which has dimension $1<2$, by the Transversality Theorem \cite{Abraham0} we can perturb $x$ keeping its endpoints in $\Omega_m\cap\partial\Omega_m$ fixed to get $y\in X_m$ such that
\begin{equation}\label{e:final}
\forall\,r\in[0,1],\qquad (i)\ \ y(r)\notin \tilde L_m,\qquad (ii)\ \ \mathcal S_m(y(r))< \mathfrak c(m)+\tilde\gamma_m.
\end{equation}
By (i) in \eqref{e:final}, the path $\tilde y:=\tilde\sigma_m\circ y\colon[0,1]\to\Omega_m$ is well defined. By Lemma \ref{l:tildesigma}.(iii), $\tilde y\in X_m$. By (ii) in \eqref{e:final} and Lemma \ref{l:tildesigma}.(iv), we finally deduce that 
\begin{equation}
\mathcal S_m(\tilde y(r))<\mathfrak c(m),
\end{equation}   
which contradicts the definition of the minimax given in \eqref{e:cm} and completes the proof. 
\end{proof}

\section{Blow-up Analysis and Localization}\label{s:local}
In this section, we deduce Theorem \ref{t:C} from Theorem \ref{t:D}. Thus, let $Q$ be a compact manifold endowed with a Riemannian metric $g$ and a nowhere vanishing magnetic field $\beta$. 

\subsection{The Blow-Up Analysis}Let $D>0$ be given by Theorem \ref{t:D} and let $\varepsilon'>0$ be arbitrary. Consider any sequence of speeds $(m_k)$ converging to $0$ and let $(x_k)$ be any sequence of loops in $Q$ such that for all $k$, we get
\begin{equation}\label{e:blowupbounds}
x_k\in\mathrm{Crit}\,\mathcal S_{m_k},\qquad \mathcal E(x_k)\leq Dm_k,\qquad \mathcal S_{m_k}(x_k)\leq \frac{\pi m_k^2}{\Vert\beta\Vert_\infty}(1+\varepsilon').
\end{equation}
We define $T_k:=T_{m_k}(x_k)=\tfrac{1}{m_k}\mathcal E(x_k)$. Therefore,
\begin{equation}\label{e:blowupba}
C\leq T_k\leq D,\qquad \forall\,k\in\mathbb N,
\end{equation}
where the first inequality follows from Lemma \ref{l:lowerboundperiod}. By Lemma \ref{l:crit}, the reparameterized loops $q_k$, where $q_k(t):=x_k(t/T_k)$ for all $t\in\mathbb R$, are contractible periodic magnetic geodesics with speed $m_k$, period $T_k$, and length
\begin{equation}
\ell(q_k)=\mathcal E(x_k)=m_kT_k.
\end{equation}

Let $SQ\subset TQ$ be the unit tangent bundle of $g$, and normalize the tangent vectors as
\begin{equation}
u_k:=\Big(q_k,v_k:=\frac{1}{m_k}\dot q_k\Big)\colon\mathbb R\to SQ.
\end{equation}
Since $SQ$ is compact, up to taking a subsequence, there exists $(q_*,v_*)\in SQ$ such that
\begin{equation}\label{e:limitinitial}
(q_k(0),v_k(0))\to (q_*,v_*).
\end{equation}
Moreover, since $\mathcal E(x_k)$ is converging to zero by \eqref{e:blowupbounds}, we also deduce that $q_k$ is converging uniformly to the constant $q_*\in Q$ in the $C^0$-topology. Finally, we observe that $u_k=(q_k,v_k)$ satisfies the differential equation
\begin{equation}\label{e:limitODE}
D_tv=B_{q}v,
\end{equation}
where $D_t$ denotes the covariant derivative. Equation \eqref{e:limitODE} can be seen as a first-order differential equation on the manifold $SQ$ with vector field $Y_{m_k}$, where
\begin{equation}\label{e:Ym}
Y_m(q,v):=mG(q,v)+B(q,v).
\end{equation}
Here, $G$ is the geodesic vector field of $g$ and 
\begin{equation}
B(q,v):=(B_qv)^{\mathrm{ver}},\qquad \forall\,(q,v)\in SQ
\end{equation}
is the vertical lift of the Lorentz force endomorphism. We studied the solutions to \eqref{e:limitODE} when $q=q_*$ is constant in Section \ref{s:linear}. The discussion contained in that section yields the following result.

\begin{lem}\label{l:blowupbb}
The vector $v_*$ in \eqref{e:limitinitial} belongs to $(\ker B_{q_*})^\perp$.
\end{lem}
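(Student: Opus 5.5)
The plan is to pass to the limit in the ODE \eqref{e:limitODE} and then use periodicity. The vector fields $Y_{m_k}=m_kG+B$ converge in $C^1$ on the compact manifold $SQ$ to the vertical field $B$. By continuous dependence of solutions on parameters and initial data, together with \eqref{e:limitinitial}, the flow lines $u_k$ converge uniformly on every compact time interval to the flow line $u_\infty$ of $B$ starting at $(q_*,v_*)$. Since $B$ is vertical, $u_\infty(t)=(q_*,v(t))$, where $v$ solves the linear equation \eqref{e:linear}, $\dot v=B_{q_*}v$. By \eqref{e:linearv}, writing $v_*=(z,w)$ in the splitting \eqref{e:identify}, we get $v(t)=(e^{ib_1t}z_1,\ldots,e^{ib_jt}z_j,w)$.

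Next I use the period bounds \eqref{e:blowupba}. Since $C\le T_k\le D$, after passing to a subsequence $T_k\to T_\infty\in[C,D]$ with $T_\infty>0$. Each $q_k$ is $T_k$-periodic, so $v_k(T_k)=v_k(0)$. Uniform convergence on $[0,D+1]$ and equicontinuity (the fields $Y_{m_k}$ are uniformly bounded) give $v(T_\infty)=v_*$.

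The key point uses the uniform position convergence instead of periodicity alone. Periodicity of $v$ does not force $w=0$, because the $w$-component is constant. The remedy is to integrate. Since $\dot q_k=m_kv_k$ and $q_k$ is $T_k$-periodic, we have $\int_0^{T_k}v_k\,dt=0$, understood in a normal chart around $q_*$, where $v_k$ is close to its coordinate expression because $q_k\to q_*$ uniformly. Passing to the limit gives $\int_0^{T_\infty}v(t)\,dt=0$. The $\ker B_{q_*}$-component of this integral is $T_\infty w$, so $w=0$ because $T_\infty>0$. Hence $v_*=(z,0)$ lies in $(\ker B_{q_*})^\perp$, and $|z|=1$.

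The main obstacle is justifying $\int_0^{T_k}v_k\,dt=0$ in the limit, since $v_k$ lives in varying tangent spaces $T_{q_k(t)}Q$. I would handle this by working in a fixed coordinate chart around $q_*$, which contains every $q_k$ for large $k$ by uniform convergence. In coordinates, $\dot q_k=m_kv_k$ holds literally, so $\int_0^{T_k}v_k\,dt=\frac{1}{m_k}(q_k(T_k)-q_k(0))=0$. The coordinate expressions of $v_k$ converge uniformly to the coordinate expression of $v$ in $T_{q_*}Q$, and the limit identity follows.
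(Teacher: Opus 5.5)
Your proof is correct and follows the paper's argument. Both pass to the limit $Y_{m_k}\to B$ to get uniform convergence of $\tfrac{1}{m_k}\dot q_k$ on compact time intervals in a chart centered at $q_*$, and then integrate the $\ker B_{q_*}$-component of the velocity over one period, using $T_k\ge C>0$ to force $w=0$. The differences are only cosmetic: the paper argues by contradiction without extracting a convergent subsequence of $(T_k)$, and your intermediate step $v(T_\infty)=v_*$ is not needed.
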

\begin{proof}
We assume by contradiction that $v_*\notin(\ker B_{q_*})^\perp$. Therefore, in the splitting given by \eqref{e:identify}, we have
\begin{equation}
v_*=(z_*,w_*)\in (\ker B_{q_*})^\perp\times\ker B_{q_*},\qquad w_*\neq0.
\end{equation}
We consider coordinates on $Q$ centered at $q_*$ denoted by
\begin{equation}
(\zeta,\omega)\in\mathbb C^j\times\mathbb R^k\cong (\ker B_{q_*})^\perp\times\ker B_{q_*}
\end{equation}
such that the coordinate vectors at the origin yield an orthonormal basis of $(\ker B_{q_*})^\perp$ and $\ker B_{q_*}$. In these coordinates, we write
\begin{equation}
q_k=(\zeta_k,\omega_k).    
\end{equation}
By the continuous dependence of the solution of ordinary differential equations on initial data applied to \eqref{e:limitODE}, we deduce that 
\begin{equation}
u_k=(\zeta_k,\omega_k,z_k,w_k),\qquad \text{where}\ \ z_k:=\tfrac{1}{m_k}\dot\zeta_k,\quad w_k:=\tfrac{1}{m_k}\dot\omega_k
\end{equation}
converges uniformly on compact intervals of time to \begin{equation}
u=(0,0,z,w_*),\qquad \text{for some }z\colon \mathbb R\to (\ker B_{q_*})^\perp,\quad z(0)=z_*.    
\end{equation}
In particular, $w_k=\tfrac{1}{m_k}\dot \omega_k$ converges uniformly to the constant $w_*$ on compact intervals of time. Integrating $\tfrac{1}{m_k}\dot \omega_k$ over the interval $[0,T_k]\subset [0,D]$, it follows that in the coordinates around $q_*$
\begin{equation}
0=\frac{1}{m_k}\Big(\omega_k(T_k)-\omega_k(0)\Big)=T_kw_*+o(1),
\end{equation}
which is a contradiction since $T_k\geq C$ for all $k$ and $w_*\neq0$.
\end{proof}
By \eqref{e:blowupba}, up to taking a subsequence, we can assume that $T_k\to T>0$. Therefore,
\begin{equation}
u_k=(q_*,v)+o(1)
\end{equation}
where $(q_*,v)$ is an orbit of $Y_0$ given in \eqref{e:Ym} with period $T$, the orbit $v$ lies in $\ker B_{q_*}\cap S_{q_*}Q$, and $o(1)$ is small in the $C^0$-topology, uniformly on compact time intervals. It follows that in normal coordinates centered at $q_*$ and modeled on $T_{q_*}Q$, we have
\begin{equation}\label{e:omk}
q_k=\bar q_k+o(m_k),
\end{equation}
where $\bar q_k$ is a magnetic geodesic for the constant metric $g_{q_*}$ and the constant magnetic field $\beta_{q_*}$ with period $T$ and speed $m_k$, and $o(m_k)$ is a remainder which is small in the $C^1$-topology, uniformly on compact time intervals. By inequality \eqref{e:linearperiod2} in Lemma \ref{l:boundslinear}, we have
\begin{equation}\label{e:lowerT2}
T\geq \frac{2\pi}{|\beta_{q_*}|}.
\end{equation}
Since $q_k=q_*+o(1)$, we see that 
\begin{equation}\label{e:o1o1}
g_{q_k}=g_{q_*}+o(1),\qquad \beta_{q_k}=\beta_{q_*}+o(1).
\end{equation}
Therefore, we deduce, letting $e_k:=\tfrac12m_k^2$, that
\begin{equation}\label{e:expansion}
S_{e_k}(q_k)=S_{e_k}^{q_*}(q_k)+o(m_k^2)=\Big(S_{e_k}^{q_*}(\bar q_k)+o(m_k^2)\Big)+o(m_k^2)=\frac{1}{2}m_k^2T+o(m_k^2),
\end{equation}
where 
\begin{itemize}
    \item in the first equality we used \eqref{e:o1o1} and $|\dot q_k|=m_k$,
    \item in the second equality we used \eqref{e:omk},
    \item in the third equality we used \eqref{e:linearaction} of Lemma \ref{l:boundslinear}.
\end{itemize}

On the other hand, by \eqref{e:blowupbounds}, we have 
\begin{equation}
S_{e_k}(q_k)=\mathcal S_{m_k}(x_k)\leq\frac{\pi m_k^2}{\Vert\beta\Vert_\infty}(1+\varepsilon').
\end{equation} 
Plugging this inequality into \eqref{e:expansion} and passing to the limit in $k$, we get the upper bound 
\begin{equation}
T\leq \frac{2\pi}{\Vert\beta\Vert_\infty}(1+\varepsilon').
\end{equation}
Combining this inequality with the lower bound \eqref{e:lowerT2}, we get
\begin{equation}
|\beta_{q_*}|\geq \frac{\Vert\beta\Vert_\infty}{1+\varepsilon'}.
\end{equation}
Let us take any $\varepsilon>\varepsilon'$. Since $q_k$ is uniformly converging to $q_*$, and $T_k\to T$, we conclude for all $k$ large enough
\begin{equation}\label{e:betabeta}
\forall\,t\in\mathbb R,\ \ |\beta_{q_k(t)}|\geq \frac{\Vert\beta\Vert_\infty}{1+\varepsilon},\qquad 
\ell(q_k)=m_kT_k\leq \frac{2\pi m_k}{\Vert\beta\Vert_\infty}(1+\varepsilon).
\end{equation}
\subsection{The Proof of Theorem \ref{t:C}}
Given the blow-up analysis of the previous subsection, the proof of Theorem \ref{t:C} readily follows from the classical subsequence argument.
\begin{proof}[Proof of Theorem \ref{t:C}]
Assume by contradiction that for some open neighborhood $V\subset Q$ of the set $\{q\in Q\ |\ |\beta_q|=\Vert\beta\Vert_\infty\}$ there is a sequence $m_k\to 0$ such that for all sequences $(q_k)$ of contractible periodic magnetic geodesics with speed $m_k$, 
\begin{equation}
q_k\not\subset V,\qquad \forall\,k\in\mathbb N.
\end{equation}
Since $Q$ is compact, there exists $\varepsilon>0$ such that
\begin{equation}\label{e:epsilonV}
\Big\{q\in Q\ \Big|\ |\beta_q|\geq \frac{\Vert\beta\Vert_\infty}{1+\varepsilon}\Big\}\subset V.
\end{equation}
Applying Theorem \ref{t:D} with $\varepsilon'\in(0,\varepsilon)$, given the sequence $(m_k)$ there is a sequence $(x_k)$ satisfying \eqref{e:blowupbounds} for $\varepsilon'\in(0,\varepsilon)$ and all $k$ large enough. Thus, by \eqref{e:betabeta} and \eqref{e:epsilonV}, up to taking a subsequence, the reparameterized subsequence $(q_k)$ is contained in $V$ for $k$ large enough, which is a contradiction.

Similarly, assume that for some $\varepsilon>0$ there is a sequence $m_k\to 0$ such that all sequences $(q_k)$ of contractible periodic magnetic geodesics with speed $m_k$ have length
\begin{equation}
    \ell(q_k)> \frac{2\pi m_k}{\Vert\beta\Vert_\infty}(1+\varepsilon).
\end{equation}
Applying Theorem \ref{t:D} with $\varepsilon'\in(0,\varepsilon)$, given the sequence $(m_k)$ there is a sequence $(x_k)$ satisfying \eqref{e:blowupbounds} for all $k$ large enough. Then, by the argument above, up to taking a subsequence, the reparameterized subsequence $(q_k)$ satisfies 
\begin{equation}
\ell(q_k)\leq\frac{2\pi m_k}{\Vert\beta\Vert_\infty}(1+\varepsilon)
\end{equation}
for $k$ large enough, which is a contradiction.
\end{proof}
\section{A Non-compact Example without Periodic Magnetic Geodesics at Low Speed}\label{A Non-compact Example without Periodic Magnetic Geodesics at Low Speed}
This section is entirely devoted to the proof of Theorem \ref{t:B}.
\subsection{A Warped-Product Metric on $\mathbb R^2$}Let $(x,y)$ be coordinates on $\mathbb R^2$ and consider a metric
\begin{equation}
\bar g=\mathrm{d}x^2+a(x)^2\mathrm{d}y^2,    
\end{equation}
where $a\colon \mathbb R\to(0,\infty)$ is a smooth function. The Gaussian curvature $K\colon\mathbb R\to\mathbb R$ of $\bar g$ is given by the formula
\begin{equation}\label{e:curvature}
K=-\frac{a''}{a}.
\end{equation}
Let us take as $K$ any smooth function that satisfies
\begin{equation}\label{e:boundcurv}
-4\leq K\leq -1,\qquad 0<K'\leq 1
\end{equation}
and define $a$ as the unique solution to equation \eqref{e:curvature} with the initial conditions
\begin{equation}
a(0)=1,\quad a'(0)=0.
\end{equation}
We need to check that such a solution is always positive. Notice, indeed, that since $K<0$, the function $a$ cannot have any positive maximum, and since $a(0)=1$, we deduce that
\begin{equation}\label{e:bounda}
a(x)>1,\qquad \forall\,x\neq0.
\end{equation}
Thus, $\mathrm{d}x^2+a(x)\mathrm{d}y^2\geq \mathrm{d}x^2+\mathrm{d}y^2$ and since the Euclidean metric is complete, so is the metric $\bar g$.

To better study the function $a$ it is convenient to introduce the auxiliary function
\begin{equation}
f\colon\mathbb R\to\mathbb R,\qquad f:=\frac{a'}{a}.
\end{equation}
Then $f(0)=0$ and the Riccati equation
\begin{equation}\label{e:Riccati}
f'+f^2+K=0
\end{equation}
holds. Using \eqref{e:boundcurv}, the comparison theorem for ordinary differential equations yields
\begin{equation}\label{e:boundf}
    \tanh(x)\leq f(x)\leq 2\tanh(2x),\qquad \forall\,x\in\mathbb R.
\end{equation}
In particular, $f$ is bounded. By \eqref{e:boundcurv} and \eqref{e:Riccati} the functions $f'$ and $f''$ are also bounded.

We take as magnetic form the area of $\bar g$, that is, \begin{equation}
\bar\beta=a(x)\mathrm{d}x\wedge \mathrm{d}y.    
\end{equation}
In particular, the magnetic strength in this example is constant equal to $1$. Therefore, the whole $\mathbb R^2$ is a non-empty strict local maximum for the magnetic strength.

\subsection{The Power Series Expansion of the Action}By \cite{AB2021} the magnetic geodesics at speed $m$ for the pair $(\bar g,\bar \beta)$ have the integral of motion
\begin{equation}
I=ma(x)\sin\phi-A(x),
\end{equation}
where $A\colon\mathbb R\to\mathbb R$ is any function such that $A'=a$, and $\phi$ is the angle the tangent vector makes with the $x$-axis. We observe that
\begin{equation}
\frac{\partial I}{\partial x}=-a(x)\big(1-mf(x)\sin\phi\big).
\end{equation}
By the bounds \eqref{e:boundf} on $f$ and the bounds \eqref{e:bounda} on $a$, we see that there is $\varepsilon>0$ such that
\begin{equation}\label{e:boundI}
1-mf(x)\sin\phi\geq \varepsilon
\end{equation}
for all low speeds $m$. By the Implicit Function Theorem, we can express the variable $x$ as a function of $I$ and $\phi$, that is, $x=x(I,\phi,m)$. We have
\begin{equation}
\frac{\partial x}{\partial \phi}=-\frac{\frac{\partial I}{\partial \phi}}{\frac{\partial I}{\partial x}}=\frac{m\cos\phi}{1-mf(x)\sin\phi},\qquad \frac{\partial x}{\partial m}=-\frac{\frac{\partial I}{\partial m}}{\frac{\partial I}{\partial x}}=\frac{\sin\phi}{1-mf(x)\sin\phi}.
\end{equation}
We denote $x_0(I)=x(I,\phi,0)=(-A)^{-1}(I)$ and observe for later purposes that, expanding the function $m\mapsto f(x(I,\phi,m))$ at $m=0$, we get
\begin{equation}\label{e:fm}
\begin{aligned}
f(x(I,\phi,m))&=f(x_0(I))+mf'(x_0(I))\frac{\partial x}{\partial m}+o(m)\\
&=f(x_0(I))+mf'(x_0(I))\sin\phi+o(m),
\end{aligned}
\end{equation}
where the remainder is an $o(m)$ in the $C^1$-topology in the variable $I$, uniformly in $\phi$, by the $C^2$-boundedness of $f$ and \eqref{e:boundI}.

We consider the action functional
\begin{equation}
S\colon \mathbb R\to\mathbb R,\qquad S(c):=\int_{\{I=c\}}m\cos\phi\, \mathrm{d}x.
\end{equation}
By \cite{AB2021}, critical points of $S$ are in one-to-one correspondence with periodic magnetic geodesics. We compute
\begin{equation*}
\begin{aligned}
S(c)&=\int_0^{2\pi}\frac{m^2\cos^2\phi}{1-mf(x)\sin\phi}\mathrm{d}\phi\\
&=\int_0^{2\pi}m^2\cos^2\phi\Big(1+mf(x)\sin\phi+m^2f(x)^2\sin^2\phi\Big)\mathrm{d}\phi+o(m^4)\\
&=\int_0^{2\pi}m^2\cos^2\phi\Big(1+mf(x_0(c))\sin\phi+m^2\big(f'(x_0(c))+f(x_0(c))^2\big)\sin^2\phi\Big)\mathrm{d}\phi+o(m^4)\\
&=\pi m^2-\tfrac{\pi}{4}m^4K(x_0(c))+o(m^4),
\end{aligned}
\end{equation*}
where we used the geometric series expansion, the formula in \eqref{e:fm}, the Riccati equation \eqref{e:Riccati}, and standard integrals of trigonometric functions. Here, the remainder is an $o(m^4)$ in the $C^1$-topology in the variable $c\in \mathbb R$. By \eqref{e:boundcurv}, for every bounded interval $(c_-,c_+)$, there exists $m_*>0$ such that $S'(c)\neq0$ for all $c\in (c_-,c_+)$ and all $m\in(0,m_*)$. Indeed, $K'$ is bounded away from zero in any such interval. It follows that there are no contractible closed magnetic geodesics with speed in $(0,m_*)$ for the pair $(\bar g,\bar \beta)$ on the strip $(x_-,x_+)\times\mathbb R$, where $-A(x_\pm)=c_\pm$. Hence, taking a smooth orientation-preserving diffeomorphism $\psi: \mathbb R^2 \to (x_-,x_+)\times\mathbb R$ of the type $\psi(x,y)=(h(x),y)$ for all $(x,y)\in \mathbb R^2$, the pullback magnetic system given by 
\begin{equation}
g:=\psi^*\bar g,\qquad \beta:=\mu_g=\psi^*\bar\beta
\end{equation}
satisfies the property of Theorem \ref{t:B}, and is still invariant under vertical translations. Note that the metric $g$ in the theorem is not required to be complete.
\begin{rmk}
Even if it is not needed for our purposes, it is conceivable that we can choose the Gaussian curvature $K$ so that $S'(c)\neq0$ for all $m$ small enough and all $c\in\mathbb R$. This would require to show that there exists a constant $C>0$ such that
\begin{equation}
|f'(x)|\leq CK'(x),\qquad\forall\, x\in\mathbb R.
\end{equation}
Under this assumption there will be no contractible closed magnetic geodesics on the whole $\mathbb R^2$ for all small speeds for the \textit{complete} Riemannian metric $g=\mathrm{d}x^2+a(x)^2\mathrm{d}y^2$ and the magnetic form $\beta=\mu_g=a(x) \mathrm{d}x\wedge \mathrm{d}y$.
\end{rmk}

\bibliography{bib}

@article{Abraham0,
 author = {Abraham, Ralph},
 title = {Transversality in manifolds of mappings},
 fjournal = {Bulletin of the American Mathematical Society},
 journal = {Bull. Am. Math. Soc.},
 issn = {0002-9904},
 volume = {69},
 pages = {470--474},
 year = {1963},
 language = {English},
 doi = {10.1090/S0002-9904-1963-10969-6},
 zbMATH = {3273749},
 Zbl = {0171.44501}
}

@article{AB2021,
 author = {Asselle, Luca and Benedetti, Gabriele},
 title = {Integrable magnetic flows on the two-torus: {Zoll} examples and systolic inequalities},
 fjournal = {The Journal of Geometric Analysis},
 journal = {J. Geom. Anal.},
 issn = {1050-6926},
 volume = {31},
 number = {3},
 pages = {2924--2940},
 year = {2021},
 language = {English},
 doi = {10.1007/s12220-020-00379-1},
 zbMATH = {7328225},
 Zbl = {1480.70024}
}

@misc{Hasselblatt,
 author = {Hasselblatt, Boris and Wang, Jincheng},
 title = {Surfaces with nonpositive magnetic curvature
},
 year = {2026},
 howpublished = {Preprint, {arXiv}:2608.13534 [math.{DS}]},
 url = {https://arxiv.org/abs/2608.13534},
 arXiv = {arXiv:2608.13534}
}

@misc{RTW,
 author = {Rankin, Shane and Terek, Ivo and Weed, David},
 title = {The curvature of left-invariant magnetic systems
},
 year = {2026},
 howpublished = {Preprint, {arXiv}:2607.09107 [math.{DG}]},
 url = {https://arxiv.org/abs/2607.09107},
 arXiv = {arXiv:2607.09107}
}

@article{Gouda,
 author = {Gouda, Norio},
 title = {Magnetic flows of {Anosov} type},
 fjournal = {T{\^o}hoku Mathematical Journal. Second Series},
 journal = {T{\^o}hoku Math. J. (2)},
 issn = {0040-8735},
 volume = {49},
 number = {2},
 pages = {165--183},
 year = {1997},
 language = {English},
 doi = {10.2748/tmj/1178225145},
 zbMATH = {1054461},
 Zbl = {0938.37011}
}

@article{Gro99,
 author = {Grognet, St{\'e}phane},
 title = {Magnetic flow on negative curvature},
 fjournal = {Ergodic Theory and Dynamical Systems},
 journal = {Ergodic Theory Dyn. Syst.},
 issn = {0143-3857},
 volume = {19},
 number = {2},
 pages = {413--436},
 year = {1999},
 language = {French},
 doi = {10.1017/S0143385799126634},
 zbMATH = {1339310},
 Zbl = {0935.53037}
}

@article{Grognet,
 author = {Grognet, St{\'e}phane},
 title = {Entropies of magnetic flows},
 fjournal = {Annales de l'Institut Henri Poincar{\'e}. Physique Th{\'e}orique},
 journal = {Ann. Inst. Henri Poincar{\'e}, Phys. Th{\'e}or.},
 issn = {0246-0211},
 volume = {71},
 number = {4},
 pages = {395--424},
 year = {1999},
 language = {French},
 url = {https://eudml.org/doc/76840},
 zbMATH = {1383917},
 Zbl = {1131.37300}
}

@incollection{PP96,
 author = {Paternain, Gabriel P. and Paternain, Miguel},
 title = {Anosov geodesic flows and twisted symplectic structures},
 booktitle = {1st international conference on dynamical systems, Montevideo, Uruguay, 1995 - a tribute to Ricardo Ma\~n\'e. Proceedings},
 isbn = {0-582-30296-X},
 pages = {132--145},
 year = {1996},
 publisher = {Harlow: Longman},
 language = {English},
 zbMATH = {978569},
 Zbl = {0868.58062}
}

@article{Pat06,
 author = {Paternain, Gabriel P.},
 title = {Magnetic rigidity of horocycle flows},
 fjournal = {Pacific Journal of Mathematics},
 journal = {Pac. J. Math.},
 issn = {1945-5844},
 volume = {225},
 number = {2},
 pages = {301--323},
 year = {2006},
 language = {English},
 doi = {10.2140/pjm.2006.225.301},
 zbMATH = {5170778},
 Zbl = {1116.37020}
}

@article{Pat09,
 author = {Paternain, Gabriel P.},
 title = {Helicity and the {Ma{\~n}{\'e}} critical value},
 fjournal = {Algebraic \& Geometric Topology},
 journal = {Algebr. Geom. Topol.},
 issn = {1472-2747},
 volume = {9},
 number = {3},
 pages = {1413--1422},
 year = {2009},
 language = {English},
 doi = {10.2140/agt.2009.9.1413},
 zbMATH = {5597150},
 Zbl = {1181.53035}
}

@article{Wojtkowski,
 author = {Wojtkowski, Maciej P.},
 title = {Magnetic flows and {Gaussian} thermostats on manifolds of negative curvature},
 fjournal = {Fundamenta Mathematicae},
 journal = {Fundam. Math.},
 issn = {0016-2736},
 volume = {163},
 number = {2},
 pages = {177--191},
 year = {2000},
 language = {English},
 doi = {10.4064/fm-163-2-177-191},
 url = {https://eudml.org/doc/212437},
 zbMATH = {1445844},
 Zbl = {0997.37011}
}

@book{Milnor,
 author = {Milnor, John W.},
 title = {Morse theory. {Based} on lecture notes by {M}. {Spivak} and {R}. {Wells}},
 fseries = {Annals of Mathematics Studies},
 series = {Ann. Math. Stud.},
 volume = {51},
 year = {1963},
 publisher = {Princeton University Press, Princeton, NJ},
 language = {English},
 doi = {10.1515/9781400881802},
 zbMATH = {3176330},
 Zbl = {0108.10401}
}

@misc{Abraham,
 author = {Abraham, Ralph and Marsden, Jerrold E.},
 title = {Foundations of mechanics. 2nd ed., rev., enl., and reset. {With} the assistance of {Tudor} {Ratiu} and {Richard} {Cushman}},
 year = {1978},
 language = {English},
 howpublished = {Reading, {Massachusetts}: {The} {Benjamin}/{Cummings} {Publishing} {Company}, {Inc}., {Advanced} {Book} {Program}. m-{XVI}, {XXII}, 806 p. \$ 36.50 (1978).},
 zbMATH = {3610787},
 Zbl = {0393.70001}
}

@article {Har,
    AUTHOR = {Kharlamov, Mikhail P.},
     TITLE = {Some applications of differential geometry in the theory of
              mechanical systems},
   JOURNAL = {Mekh. Tverd. Tela},
  FJOURNAL = {Akademiya Nauk Ukrainsko\u i\ SSR. Institut Prikladno\u i\
              Matematiki i Mekhaniki. Mekhanika Tverdogo Tela},
    NUMBER = {11},
      YEAR = {1979},
     PAGES = {37--49, 118},
      ISSN = {0321-1975},
   MRCLASS = {70G25 (34C35 58Fxx)},
  MRNUMBER = {536269},
MRREVIEWER = {J.\ S.\ Joel},
}

@article{PS,
 author = {Palais, Richard S. and Smale, Steve},
 title = {A generalized {Morse} theory},
 fjournal = {Bulletin of the American Mathematical Society},
 journal = {Bull. Am. Math. Soc.},
 issn = {0002-9904},
 volume = {70},
 pages = {165--172},
 year = {1964},
 language = {English},
 doi = {10.1090/S0002-9904-1964-11062-4},
 zbMATH = {3193984},
 Zbl = {0119.09201}
}

@misc{BP,
 author = {Bohr, Jan and Paternain, Gabriel P.},
 title = {Zoll magnetic structures and ruled surfaces},
 year = {2026},
 howpublished = {Preprint, {arXiv}:2606.25173 [math.{DG}]},
 url = {https://arxiv.org/abs/2606.25173},
 arXiv = {arXiv:2606.25173}
}

@article{Tai2010,
 author = {Taimanov, Iskander A.},
 title = {Periodic magnetic geodesics on almost every energy level via variational methods},
 fjournal = {Regular and Chaotic Dynamics},
 journal = {Regul. Chaotic Dyn.},
 issn = {1560-3547},
 volume = {15},
 number = {4-5},
 pages = {598--605},
 year = {2010},
 language = {English},
 doi = {10.1134/S1560354710040131},
 zbMATH = {5836822},
 Zbl = {1207.58015}
}

@article{Koh,
 author = {Koh, Dennis},
 title = {On the evolution equation for magnetic geodesics},
 fjournal = {Calculus of Variations and Partial Differential Equations},
 journal = {Calc. Var. Partial Differ. Equ.},
 issn = {0944-2669},
 volume = {36},
 number = {3},
 pages = {453--480},
 year = {2009},
 language = {English},
 doi = {10.1007/s00526-009-0237-2},
 zbMATH = {5655365},
 Zbl = {1183.58024}
}

@article{PP,
 author = {Paternain, Gabriel P. and Paternain, Miguel},
 title = {Critical values of autonomous {Lagrangian} systems},
 fjournal = {Commentarii Mathematici Helvetici},
 journal = {Comment. Math. Helv.},
 issn = {0010-2571},
 volume = {72},
 number = {3},
 pages = {481--499},
 year = {1997},
 language = {English},
 doi = {10.1007/s000140050029},
 zbMATH = {1121242},
 Zbl = {0921.58017}
}

@article{Mane0,
 author = {Ma{\~n}{\'e}, Ricardo},
 title = {Lagrangian flows: the dynamics of globally minimizing orbits},
 fjournal = {Boletim da Sociedade Brasileira de Matem{\'a}tica. Nova S{\'e}rie},
 journal = {Bol. Soc. Bras. Mat., Nova S{\'e}r.},
 issn = {0100-3569},
 volume = {28},
 number = {2},
 pages = {141--153},
 year = {1997},
 language = {English},
 doi = {10.1007/BF01233389},
 zbMATH = {1089741},
 Zbl = {0892.58064}
}

@article{Mane1,
 author = {Contreras, Gonzalo and Delgado, Jorge and Iturriaga, Renato},
 title = {Lagrangian flows: the dynamics of globally minimizing orbits. {II}},
 fjournal = {Boletim da Sociedade Brasileira de Matem{\'a}tica. Nova S{\'e}rie},
 journal = {Bol. Soc. Bras. Mat., Nova S{\'e}r.},
 issn = {0100-3569},
 volume = {28},
 number = {2},
 pages = {155--196},
 year = {1997},
 language = {English},
 doi = {10.1007/BF01233390},
 zbMATH = {1089742},
 Zbl = {0892.58065}
}

@book{Sorrentino,
 author = {Sorrentino, Alfonso},
 title = {Action-minimizing methods in {Hamiltonian} dynamics. {An} introduction to {Aubry}-{Mather} theory},
 fseries = {Mathematical Notes (Princeton)},
 series = {Math. Notes (Princeton)},
 volume = {50},
 isbn = {978-0-691-16450-2; 978-1-400-86661-8},
 year = {2015},
 publisher = {Princeton, NJ: Princeton University Press},
 language = {English},
 doi = {10.1515/9781400866618},
 zbMATH = {6443553},
 Zbl = {1373.37002}
}

@article{Zuddas0,
 author = {Musina, Roberta and Zuddas, Fabio},
 title = {Embedded loops in the hyperbolic plane with prescribed, almost constant curvature},
 fjournal = {Annals of Global Analysis and Geometry},
 journal = {Ann. Global Anal. Geom.},
 issn = {0232-704X},
 volume = {55},
 number = {3},
 pages = {509--528},
 year = {2019},
 language = {English},
 doi = {10.1007/s10455-018-9638-9},
 zbMATH = {7050116},
 Zbl = {1412.53059}
}

@incollection{CoraMusina,
 author = {Cora, Gabriele and Musina, Roberta},
 title = {Planar loops with prescribed curvature via {Hardy}'s inequality},
 booktitle = {Friends in partial differential equations. The Nina N. Uraltseva 90th anniversary volume},
 isbn = {978-3-98547-094-5; 978-3-98547-594-0},
 pages = {89--109},
 year = {2025},
 publisher = {Berlin: European Mathematical Society (EMS)},
 language = {English},
 doi = {10.4171/NNU90/4},
 zbMATH = {8132147}
}

@book{CEM,
 author = {Cieliebak, Kai and Eliashberg, Yakov and Mishachev, Nikolai},
 title = {Introduction to the {{\(h\)}}-principle},
 edition = {2nd edition},
 fseries = {Graduate Studies in Mathematics},
 series = {Grad. Stud. Math.},
 issn = {1065-7339},
 volume = {239},
 isbn = {978-1-4704-6105-8; 978-1-4704-7617-5; 978-1-4704-7618-2},
 year = {2024},
 publisher = {Providence, RI: American Mathematical Society (AMS)},
 language = {English},
 doi = {10.1090/gsm/239},
 zbMATH = {7792729},
 Zbl = {1531.58008}
}

@misc{BBS,
 author = {Benedetti, Gabriele and Bimmermann, Johanna and Sanjay, Samanyu},
 title = {On the rigidity of {H}amiltonians which are {Z}oll near a minimum, with an application to magnetic systems and almost-{K}ähler manifolds},
 year = {2026},
 howpublished = {Preprint, {arXiv}:2604.06951 [math.{SG}]},
 url = {https://arxiv.org/abs/2604.06951},
 arXiv = {arXiv:2604.06951}
}

@article{Jean,
 author = {Jeanjean, Louis},
 title = {On the existence of bounded {Palais}-{Smale} sequences and application to a {Landesman}-{Lazer}-type problem set on {{\(\mathbb{R}^N\)}}},
 fjournal = {Proceedings of the Royal Society of Edinburgh. Section A. Mathematics},
 journal = {Proc. R. Soc. Edinb., Sect. A, Math.},
 issn = {0308-2105},
 volume = {129},
 number = {4},
 pages = {787--809},
 year = {1999},
 language = {English},
 doi = {10.1017/S0308210500013147},
 zbMATH = {1348082},
 Zbl = {0935.35044}
}

@article{Struwe88,
 author = {Struwe, Michael},
 title = {The existence of surfaces of constant mean curvature with free boundaries},
 fjournal = {Acta Mathematica},
 journal = {Acta Math.},
 issn = {0001-5962},
 volume = {160},
 number = {1-2},
 pages = {19--64},
 year = {1988},
 language = {English},
 doi = {10.1007/BF02392272},
 zbMATH = {4054471},
 Zbl = {0646.53005}
}

@article{Groman,
 author = {Groman, Yoel and Merry, Will J.},
 title = {The symplectic cohomology of magnetic cotangent bundles},
 fjournal = {Commentarii Mathematici Helvetici},
 journal = {Comment. Math. Helv.},
 issn = {0010-2571},
 volume = {98},
 number = {2},
 pages = {365--424},
 year = {2023},
 language = {English},
 doi = {10.4171/CMH/555},
 zbMATH = {7755954},
 Zbl = {1529.53079}
}

@article{FS,
 author = {Frauenfelder, Urs and Schlenk, Felix},
 title = {Hamiltonian dynamics on convex symplectic manifolds},
 fjournal = {Israel Journal of Mathematics},
 journal = {Isr. J. Math.},
 issn = {0021-2172},
 volume = {159},
 pages = {1--56},
 year = {2007},
 language = {English},
 doi = {10.1007/s11856-007-0037-3},
 url = {opus.bibliothek.uni-augsburg.de/opus4/files/56443/0303282v1.pdf},
 zbMATH = {5186271},
 Zbl = {1126.53056}
}

@article{Polterovich,
 author = {Polterovich, Leonid},
 title = {Geometry on the group of {Hamiltonian} diffeomorphisms},
 fjournal = {Documenta Mathematica},
 journal = {Doc. Math.},
 issn = {1431-0635},
 volume = {Extra Vol.},
 pages = {401--410},
 year = {1998},
 language = {English},
 doi = {10.4171/dms/1-2/39},
 url = {https://eudml.org/doc/226555},
 zbMATH = {1184339},
 Zbl = {0909.58004}
}

@article{AL,
 author = {Asselle, Luca and Lange, Christian},
 title = {On the rigidity of {Zoll} magnetic systems on surfaces},
 fjournal = {Nonlinearity},
 journal = {Nonlinearity},
 issn = {0951-7715},
 volume = {33},
 number = {7},
 pages = {3173--3194},
 year = {2020},
 language = {English},
 doi = {10.1088/1361-6544/ab839c},
 zbMATH = {7208085},
 Zbl = {1444.37050}
}

@article{Ben16,
 author = {Benedetti, Gabriele},
 title = {The contact property for symplectic magnetic fields on {{\(S^{2}\)}}},
 fjournal = {Ergodic Theory and Dynamical Systems},
 journal = {Ergodic Theory Dyn. Syst.},
 issn = {0143-3857},
 volume = {36},
 number = {3},
 pages = {682--713},
 year = {2016},
 language = {English},
 doi = {10.1017/etds.2014.82},
 zbMATH = {6578154},
 Zbl = {1343.37053}
}

@article{Ketover,
 author = {Ketover, Daniel and Liokumovich, Yevgeny},
 title = {On the existence of closed {{\(C^{1, 1}\)}} curves of constant curvature},
 fjournal = {Calculus of Variations and Partial Differential Equations},
 journal = {Calc. Var. Partial Differ. Equ.},
 issn = {0944-2669},
 volume = {62},
 number = {9},
 pages = {13},
 note = {Id/No 246},
 year = {2023},
 language = {English},
 doi = {10.1007/s00526-023-02584-6},
 zbMATH = {7748720},
 Zbl = {1527.53002}
}

@incollection{Arnold97,
 author = {Arnol'd, Vladimir I.},
 title = {Remarks concerning the {Morse} theory of a divergence-free vector field, the averaging method, and the motion of a charged particle in a magnetic field},
 booktitle = {Dinamicheskie sistemy i smezhnye voprosy. Sbornik statej. K 60-letiyu so dnya rozhdeniya akademika D. V. Anosova},
 isbn = {5-02-003713-3},
 pages = {9--19},
 year = {1997},
 publisher = {Moskva: Nauka. MAIK Nauka},
 language = {English},
 zbMATH = {1193132},
 Zbl = {0923.58010}
}

@article{San0,
 author = {Raymond, Nicolas and Ng{\d{o}}c, San V{\~u}},
 title = {Geometry and spectrum in {2D} magnetic wells},
 fjournal = {Annales de l'Institut Fourier},
 journal = {Ann. Inst. Fourier},
 issn = {0373-0956},
 volume = {65},
 number = {1},
 pages = {137--169},
 year = {2015},
 language = {English},
 doi = {10.5802/aif.2927},
 zbMATH = {6496536},
 Zbl = {1327.81207}
}

@article{Tai15,
 author = {Taimanov, Iskander A.},
 title = {On an integrable magnetic geodesic flow on the two-torus},
 fjournal = {Regular and Chaotic Dynamics},
 journal = {Regul. Chaotic Dyn.},
 issn = {1560-3547},
 volume = {20},
 number = {6},
 pages = {667--678},
 year = {2015},
 language = {English},
 doi = {10.1134/S1560354715060039},
 zbMATH = {6580505},
 Zbl = {1342.53108}
}

@article{Braun,
title = {Particle motions in a magnetic field},
journal = {Journal of Differential Equations},
volume = {8},
number = {2},
pages = {294-332},
year = {1970},
issn = {0022-0396},
doi = {https://doi.org/10.1016/0022-0396(70)90009-4},
url = {https://www.sciencedirect.com/science/article/pii/0022039670900094},
author = {Martin Braun}
}

@article{Castilho,
 author = {Castilho, C{\'e}sar},
 title = {The motion of a charged particle on a {Riemannian} surface under a non-zero magnetic field},
 fjournal = {Journal of Differential Equations},
 journal = {J. Differ. Equations},
 issn = {0022-0396},
 volume = {171},
 number = {1},
 pages = {110--131},
 year = {2001},
 language = {English},
 doi = {10.1006/jdeq.2000.3836},
 zbMATH = {1599083},
 Zbl = {0984.35156}
}

@article{San1,
 author = {Nguyen, Duc Th{\d{o}} and Raymond, Nicolas and Ng{\d{o}}c, San V{\~u}},
 title = {Boundary effects on the magnetic {Hamiltonian} dynamics in two dimensions},
 fjournal = {L'Enseignement Math{\'e}matique. 2e S{\'e}rie},
 journal = {Enseign. Math. (2)},
 issn = {0013-8584},
 volume = {64},
 number = {3-4},
 pages = {353--369},
 year = {2018},
 language = {English},
 doi = {10.4171/LEM/64-3/4-7},
 zbMATH = {7117763},
 Zbl = {1435.70034}
}

@article{Arnold63,
 author = {Arnol'd, Vladimir I.},
 title = {Small denominators and problems of stability of motion in classical and celestial mechanics},
 fjournal = {Russian Mathematical Surveys},
 journal = {Russ. Math. Surv.},
 issn = {0036-0279},
 volume = {18},
 number = {6},
 pages = {85--191},
 year = {1963},
 language = {English},
 doi = {10.1070/rm1963v018n06ABEH001143},
 zbMATH = {3220205},
 Zbl = {0135.42701}
}

@article{Sternberg,
 author = {Sternberg, Shlomo},
 title = {Minimal coupling and the symplectic mechanics of a classical particle in the presence of a {Yang}-{Mills} field},
 fjournal = {Proceedings of the National Academy of Sciences of the United States of America},
 journal = {Proc. Natl. Acad. Sci. USA},
 issn = {0027-8424},
 volume = {74},
 number = {12},
 pages = {5253--5254},
 year = {1977},
 language = {English},
 doi = {10.1073/pnas.74.12.5253},
 zbMATH = {196513},
 Zbl = {0765.58010}
}

@misc{Souriau,
 author = {Souriau, Jean-Marie},
 title = {Structure des syst{\`e}mes dynamiques.},
 year = {1970},
 language = {French},
 howpublished = {Ma{\^{\i}}trises de math{\'e}matiques. {Dunod}-{Universit{\'e}}. {Paris}: {Dunod}. xxxii, 414 p. (1970).},
 zbMATH = {3297923},
 Zbl = {0186.58001}
}

@article{Siburg,
 author = {Peyerimhoff, Norbert and Siburg, Karl F.},
 title = {The dynamics of magnetic flows for energies above {Ma{\~n}{\'e}}'s critical value},
 fjournal = {Israel Journal of Mathematics},
 journal = {Isr. J. Math.},
 issn = {0021-2172},
 volume = {135},
 pages = {269--298},
 year = {2003},
 language = {English},
 doi = {10.1007/BF02776061},
 zbMATH = {2053068},
 Zbl = {1051.37034}
}

@incollection{Givental,
 author = {Arnol'd, Vladimir I. and Givental', Alexander B.},
 title = {Symplectic geometry},
 booktitle = {Dynamical systems. IV. Symplectic geometry and its applications. Transl. from the Russian by G. Wassermann},
 isbn = {3-540-17003-0},
 pages = {1},
 year = {1985},
 publisher = {Berlin etc.: Springer-Verlag},
 language = {English},
 zbMATH = {486689},
 Zbl = {0780.58016}
}

@article{Anosov,
 author = {Anosov, Dmitri V. and Sina{\u{\i}}, Yakov G.},
 title = {Some smooth ergodic systems. {With} an appendix by {G}. {A}. {Margulis}},
 fjournal = {Uspekhi Matematicheskikh Nauk [N. S.]},
 journal = {Usp. Mat. Nauk},
 issn = {0042-1316},
 volume = {22},
 number = {5(137)},
 pages = {107--172},
 year = {1967},
 language = {Russian},
 doi = {10.1070/RM1967v022n05ABEH001228},
 zbMATH = {3283739},
 Zbl = {0177.42002}
}

@article{Burns,
 author = {Burns, Keith and Paternain, Gabriel P.},
 title = {Anosov magnetic flows, critical values and topological entropy.},
 fjournal = {Nonlinearity},
 journal = {Nonlinearity},
 issn = {0951-7715},
 volume = {15},
 number = {2},
 pages = {281--314},
 year = {2002},
 language = {English},
 doi = {10.1088/0951-7715/15/2/305},
 zbMATH = {1753660},
 Zbl = {1161.37337}
}

@article{Kozlov,
 author = {Kozlov, Valery V.},
 title = {Variational calculus in the large and classical mechanics},
 fjournal = {Uspekhi Matematicheskikh Nauk [N. S.]},
 journal = {Usp. Mat. Nauk},
 issn = {0042-1316},
 volume = {40},
 number = {2(242)},
 pages = {33--60},
 year = {1985},
 language = {Russian},
 zbMATH = {3888976},
 Zbl = {0557.70025}
}

@article{Montgomery,
 author = {Montgomery, Richard},
 title = {Infinitely many syzygies},
 fjournal = {Archive for Rational Mechanics and Analysis},
 journal = {Arch. Ration. Mech. Anal.},
 issn = {0003-9527},
 volume = {164},
 number = {4},
 pages = {311--340},
 year = {2002},
 language = {English},
 doi = {10.1007/s00205-002-0211-z},
 zbMATH = {1882096},
 Zbl = {1024.70005}
}

@article{Morin,
 author = {Morin, L{\'e}o},
 title = {A semiclassical {Birkhoff} normal form for constant-rank magnetic fields},
 fjournal = {Analysis \& PDE},
 journal = {Anal. PDE},
 issn = {2157-5045},
 volume = {17},
 number = {5},
 pages = {1593--1632},
 year = {2024},
 language = {English},
 doi = {10.2140/apde.2024.17.1593},
 zbMATH = {7898587},
 Zbl = {1546.35131}
}

@article{Caponio,
 author = {Caponio, Erasmo},
 title = {Time-like solutions to the {Lorentz} force equation in time-dependent electromagnetic and gravitational fields},
 fjournal = {Journal of Differential Equations},
 journal = {J. Differ. Equations},
 issn = {0022-0396},
 volume = {199},
 number = {1},
 pages = {115--142},
 year = {2004},
 language = {English},
 doi = {10.1016/j.jde.2003.09.001},
 zbMATH = {2105463},
 Zbl = {1054.83010}
}

@article {AbbondandoloMajer2008,
    AUTHOR = {Abbondandolo, Alberto and Majer, Pietro},
     TITLE = {A {M}orse complex for {L}orentzian geodesics},
   JOURNAL = {Asian J. Math.},
  FJOURNAL = {Asian Journal of Mathematics},
    VOLUME = {12},
      YEAR = {2008},
    NUMBER = {3},
     PAGES = {299--319},
      ISSN = {1093-6106,1945-0036},
   MRCLASS = {58E10 (53C22 53C50)},
  MRNUMBER = {2453558},
MRREVIEWER = {Alessandro\ Portaluri},
       DOI = {10.4310/AJM.2008.v12.n3.a3},
       URL = {https://doi.org/10.4310/AJM.2008.v12.n3.a3},
}

@incollection{Ginzburg96b,
 author = {Ginzburg, Viktor L.},
 title = {On closed trajectories of a charge in a magnetic field. {An} application of symplectic geometry},
 booktitle = {Contact and symplectic geometry},
 isbn = {0-521-57086-7},
 pages = {131--148},
 year = {1996},
 publisher = {Cambridge: Cambridge University Press},
 language = {English},
 zbMATH = {956493},
 Zbl = {0873.58034}
}

@book{GeodesicFlows_Paternain,
 author = {Paternain, Gabriel P.},
 title = {Geodesic flows},
 fseries = {Progress in Mathematics},
 series = {Prog. Math.},
 issn = {0743-1643},
 volume = {180},
 isbn = {0-8176-4144-0},
 year = {1999},
 publisher = {Boston, MA: Birkh{\"a}user},
 language = {English},
 zbMATH = {1353524},
 Zbl = {0930.53001}
}

@article {SacksUhlenback,
    AUTHOR = {Sacks, Jonathan and Uhlenbeck, Karen},
     TITLE = {The existence of minimal immersions of two-spheres},
   JOURNAL = {Bull. Amer. Math. Soc.},
  FJOURNAL = {Bulletin of the American Mathematical Society},
    VOLUME = {83},
      YEAR = {1977},
    NUMBER = {5},
     PAGES = {1033--1036},
      ISSN = {0002-9904},
   MRCLASS = {58E15},
  MRNUMBER = {448408},
MRREVIEWER = {A.\ J.\ Tromba},
       DOI = {10.1090/S0002-9904-1977-14366-8},
       URL = {https://doi.org/10.1090/S0002-9904-1977-14366-8},
}

@article {BahriTaimanov,
    AUTHOR = {Bahri, Abbas and Taimanov, Iskander A.},
     TITLE = {Periodic orbits in magnetic fields and {R}icci curvature of
              {L}agrangian systems},
   JOURNAL = {Trans. Amer. Math. Soc.},
  FJOURNAL = {Transactions of the American Mathematical Society},
    VOLUME = {350},
      YEAR = {1998},
    NUMBER = {7},
     PAGES = {2697--2717},
      ISSN = {0002-9947,1088-6850},
   MRCLASS = {58E10 (53C80 58F22 70G05)},
  MRNUMBER = {1458315},
MRREVIEWER = {Jair\ Koiller},
       DOI = {10.1090/S0002-9947-98-02108-4},
       URL = {https://doi.org/10.1090/S0002-9947-98-02108-4},
}

@article {Struwe90,
    AUTHOR = {Struwe, Michael},
     TITLE = {Existence of periodic solutions of {H}amiltonian systems on
              almost every energy surface},
   JOURNAL = {Bol. Soc. Brasil. Mat. (N.S.)},
  FJOURNAL = {Boletim da Sociedade Brasileira de Matem\'atica. Nova S\'erie},
    VOLUME = {20},
      YEAR = {1990},
    NUMBER = {2},
     PAGES = {49--58},
      ISSN = {0100-3569},
   MRCLASS = {58F22 (34C25 58F05 70H05)},
  MRNUMBER = {1143173},
MRREVIEWER = {Gabriella\ Tarantello},
       DOI = {10.1007/BF02585433},
       URL = {https://doi.org/10.1007/BF02585433},
}

@article {AABMT,
    AUTHOR = {Abbondandolo, Alberto and Asselle, Luca and Benedetti,
              Gabriele and Mazzucchelli, Marco and Taimanov, Iskander A.},
     TITLE = {The multiplicity problem for periodic orbits of magnetic flows
              on the 2-sphere},
   JOURNAL = {Adv. Nonlinear Stud.},
  FJOURNAL = {Advanced Nonlinear Studies},
    VOLUME = {17},
      YEAR = {2017},
    NUMBER = {1},
     PAGES = {17--30},
      ISSN = {1536-1365,2169-0375},
   MRCLASS = {37J45 (58E05)},
  MRNUMBER = {3604943},
MRREVIEWER = {Adel\ Daouas},
       DOI = {10.1515/ans-2016-6003},
       URL = {https://doi.org/10.1515/ans-2016-6003},
}

@article {AM2019,
    AUTHOR = {Asselle, Luca and Mazzucchelli, Marco},
     TITLE = {On {T}onelli periodic orbits with low energy on surfaces},
   JOURNAL = {Trans. Amer. Math. Soc.},
  FJOURNAL = {Transactions of the American Mathematical Society},
    VOLUME = {371},
      YEAR = {2019},
    NUMBER = {5},
     PAGES = {3001--3048},
      ISSN = {0002-9947,1088-6850},
   MRCLASS = {37J45 (58E05)},
  MRNUMBER = {3896104},
MRREVIEWER = {Chun-Lei\ Tang},
       DOI = {10.1090/tran/7185},
       URL = {https://doi.org/10.1090/tran/7185},
}

@article {AB2017,
    AUTHOR = {Asselle, Luca and Benedetti, Gabriele},
     TITLE = {On the periodic motions of a charged particle in an
              oscillating magnetic field on the two-torus},
   JOURNAL = {Math. Z.},
  FJOURNAL = {Mathematische Zeitschrift},
    VOLUME = {286},
      YEAR = {2017},
    NUMBER = {3-4},
     PAGES = {843--859},
      ISSN = {0025-5874,1432-1823},
   MRCLASS = {37J45 (58E05)},
  MRNUMBER = {3671563},
MRREVIEWER = {Zhi-Qiang\ Wang},
       DOI = {10.1007/s00209-016-1787-6},
       URL = {https://doi.org/10.1007/s00209-016-1787-6},
}

@article {AB2015,
    AUTHOR = {Asselle, Luca and Benedetti, Gabriele},
     TITLE = {Infinitely many periodic orbits in non-exact oscillating
              magnetic fields on surfaces with genus at least two for almost
              every low energy level},
   JOURNAL = {Calc. Var. Partial Differential Equations},
  FJOURNAL = {Calculus of Variations and Partial Differential Equations},
    VOLUME = {54},
      YEAR = {2015},
    NUMBER = {2},
     PAGES = {1525--1545},
      ISSN = {0944-2669,1432-0835},
   MRCLASS = {37J45 (58E05)},
  MRNUMBER = {3396422},
MRREVIEWER = {Thomas\ Bartsch},
       DOI = {10.1007/s00526-015-0834-1},
       URL = {https://doi.org/10.1007/s00526-015-0834-1},
}

@article {AB2022,
    AUTHOR = {Asselle, Luca and Benedetti, Gabriele},
     TITLE = {Non-resonant circles for strong magnetic fields on surfaces},
   JOURNAL = {Ann. H. Lebesgue},
  FJOURNAL = {Annales Henri Lebesgue},
    VOLUME = {5},
      YEAR = {2022},
     PAGES = {1191--1211},
      ISSN = {2644-9463},
   MRCLASS = {37J40 (58E10)},
  MRNUMBER = {4526250},
       DOI = {10.5802/ahl.147},
       URL = {https://doi.org/10.5802/ahl.147},
}

@article {Gong2023,
    AUTHOR = {Gong, Wenmin},
     TITLE = {Infinitely many noncontractible closed magnetic geodesics on
              non-compact manifolds},
   JOURNAL = {Differential Geom. Appl.},
  FJOURNAL = {Differential Geometry and its Applications},
    VOLUME = {87},
      YEAR = {2023},
     PAGES = {Paper No. 101977, 34},
      ISSN = {0926-2245,1872-6984},
   MRCLASS = {58E10 (53C22)},
  MRNUMBER = {4546043},
MRREVIEWER = {Iskander\ A.\ Taimanov},
       DOI = {10.1016/j.difgeo.2023.101977},
       URL = {https://doi.org/10.1016/j.difgeo.2023.101977},
}

@article {Usher,
    AUTHOR = {Usher, Michael},
     TITLE = {Floer homology in disk bundles and symplectically twisted
              geodesic flows},
   JOURNAL = {J. Mod. Dyn.},
  FJOURNAL = {Journal of Modern Dynamics},
    VOLUME = {3},
      YEAR = {2009},
    NUMBER = {1},
     PAGES = {61--101},
      ISSN = {1930-5311,1930-532X},
   MRCLASS = {53D40 (53D25)},
  MRNUMBER = {2481333},
MRREVIEWER = {Tobias\ Ekholm},
       DOI = {10.3934/jmd.2009.3.61},
       URL = {https://doi.org/10.3934/jmd.2009.3.61},
}

@article {GG2009,
    AUTHOR = {Ginzburg, Viktor L. and G\"urel, Ba\c sak Z.},
     TITLE = {Periodic orbits of twisted geodesic flows and the
              {W}einstein-{M}oser theorem},
   JOURNAL = {Comment. Math. Helv.},
  FJOURNAL = {Commentarii Mathematici Helvetici. A Journal of the Swiss
              Mathematical Society},
    VOLUME = {84},
      YEAR = {2009},
    NUMBER = {4},
     PAGES = {865--907},
      ISSN = {0010-2571,1420-8946},
   MRCLASS = {53D40 (37J05)},
  MRNUMBER = {2534483},
MRREVIEWER = {Sonja\ Hohloch},
       DOI = {10.4171/CMH/184},
       URL = {https://doi.org/10.4171/CMH/184},
}

@article {GG2004,
    AUTHOR = {Ginzburg, Viktor L. and G\"urel, Ba\c sak Z.},
     TITLE = {Relative {H}ofer-{Z}ehnder capacity and periodic orbits in
              twisted cotangent bundles},
   JOURNAL = {Duke Math. J.},
  FJOURNAL = {Duke Mathematical Journal},
    VOLUME = {123},
      YEAR = {2004},
    NUMBER = {1},
     PAGES = {1--47},
      ISSN = {0012-7094,1547-7398},
   MRCLASS = {53D40 (37J05 37J45)},
  MRNUMBER = {2060021},
MRREVIEWER = {Kai\ Cieliebak},
       DOI = {10.1215/S0012-7094-04-12311-5},
       URL = {https://doi.org/10.1215/S0012-7094-04-12311-5},
}

@article {Schlenk,
    AUTHOR = {Schlenk, Felix},
     TITLE = {Applications of {H}ofer's geometry to {H}amiltonian dynamics},
   JOURNAL = {Comment. Math. Helv.},
  FJOURNAL = {Commentarii Mathematici Helvetici. A Journal of the Swiss
              Mathematical Society},
    VOLUME = {81},
      YEAR = {2006},
    NUMBER = {1},
     PAGES = {105--121},
      ISSN = {0010-2571,1420-8946},
   MRCLASS = {53D35 (37J05 37J45)},
  MRNUMBER = {2208800},
MRREVIEWER = {Jelena\ Kati\'c},
       DOI = {10.4171/CMH/45},
       URL = {https://doi.org/10.4171/CMH/45},
}

@article {Osuna2005,
    AUTHOR = {Osuna, Osvaldo},
     TITLE = {Periodic orbits of weakly exact magnetic flows},
     JOURNAL = {Nonl. {A}nalysis and {D}ifferential {E}quations},
     VOL = {1},
     NUMBER = {1-4},
     Pages = {93--100},
     YEAR = {2013},
}

@article {Macarini2004,
    AUTHOR = {Macarini, Leonardo},
     TITLE = {Hofer-{Z}ehnder capacity and {H}amiltonian circle actions},
   JOURNAL = {Commun. Contemp. Math.},
  FJOURNAL = {Communications in Contemporary Mathematics},
    VOLUME = {6},
      YEAR = {2004},
    NUMBER = {6},
     PAGES = {913--945},
      ISSN = {0219-1997,1793-6683},
   MRCLASS = {53D35 (37J05 37J45)},
  MRNUMBER = {2112475},
MRREVIEWER = {Martin\ Pinsonnault},
       DOI = {10.1142/S0219199704001550},
       URL = {https://doi.org/10.1142/S0219199704001550},
}

@article {GGM,
    AUTHOR = {Ginzburg, Viktor L. and G\"urel, Basak Z. and Macarini,
              Leonardo},
     TITLE = {On the {C}onley conjecture for {R}eeb flows},
   JOURNAL = {Internat. J. Math.},
  FJOURNAL = {International Journal of Mathematics},
    VOLUME = {26},
      YEAR = {2015},
    NUMBER = {7},
     PAGES = {1550047, 22},
      ISSN = {0129-167X,1793-6519},
   MRCLASS = {53D40 (37J10 37J55 53D25)},
  MRNUMBER = {3357036},
MRREVIEWER = {Darko\ Milinkovi\'c},
       DOI = {10.1142/S0129167X15500470},
       URL = {https://doi.org/10.1142/S0129167X15500470},
}

@article {AMMP,
    AUTHOR = {Abbondandolo, Alberto and Macarini, Leonardo and Mazzucchelli,
              Marco and Paternain, Gabriel P.},
     TITLE = {Infinitely many periodic orbits of exact magnetic flows on
              surfaces for almost every subcritical energy level},
   JOURNAL = {J. Eur. Math. Soc. (JEMS)},
  FJOURNAL = {Journal of the European Mathematical Society (JEMS)},
    VOLUME = {19},
      YEAR = {2017},
    NUMBER = {2},
     PAGES = {551--579},
      ISSN = {1435-9855,1435-9863},
   MRCLASS = {37J45 (58E05)},
  MRNUMBER = {3605025},
MRREVIEWER = {Maria\ Letizia\ Bertotti},
       DOI = {10.4171/JEMS/674},
       URL = {https://doi.org/10.4171/JEMS/674},
}

@article {AMP,
    AUTHOR = {Abbondandolo, Alberto and Macarini, Leonardo and Paternain,
              Gabriel P.},
     TITLE = {On the existence of three closed magnetic geodesics for
              subcritical energies},
   JOURNAL = {Comment. Math. Helv.},
  FJOURNAL = {Commentarii Mathematici Helvetici. A Journal of the Swiss
              Mathematical Society},
    VOLUME = {90},
      YEAR = {2015},
    NUMBER = {1},
     PAGES = {155--193},
      ISSN = {0010-2571,1420-8946},
   MRCLASS = {58E10 (37D40 53C22)},
  MRNUMBER = {3317337},
MRREVIEWER = {Iskander\ A.\ Taimanov},
       DOI = {10.4171/CMH/350},
       URL = {https://doi.org/10.4171/CMH/350},
}

@article {CMP,
    AUTHOR = {Contreras, Gonzalo and Macarini, Leonardo and Paternain,
              Gabriel P.},
     TITLE = {Periodic orbits for exact magnetic flows on surfaces},
   JOURNAL = {Int. Math. Res. Not.},
  FJOURNAL = {International Mathematics Research Notices},
      YEAR = {2004},
    NUMBER = {8},
     PAGES = {361--387},
      ISSN = {1073-7928,1687-0247},
   MRCLASS = {37J45 (37C27 37J05 53D35)},
  MRNUMBER = {2036336},
MRREVIEWER = {Ely\ Kerman},
       DOI = {10.1155/S1073792804205050},
       URL = {https://doi.org/10.1155/S1073792804205050},
}

@article {CIPP2000,
    AUTHOR = {Contreras, Gonzalo and Iturriaga, Renato and Paternain, Gabriel P. and
              Paternain, Miguel},
     TITLE = {The {P}alais-{S}male condition and {M}a\~n\'e's critical
              values},
   JOURNAL = {Ann. Henri Poincar\'e},
  FJOURNAL = {Annales Henri Poincar\'e. A Journal of Theoretical and
              Mathematical Physics},
    VOLUME = {1},
      YEAR = {2000},
    NUMBER = {4},
     PAGES = {655--684},
      ISSN = {1424-0637,1424-0661},
   MRCLASS = {37J50 (37J45 58E10)},
  MRNUMBER = {1785184},
MRREVIEWER = {Vittorio\ Coti Zelati},
       DOI = {10.1007/PL00001011},
       URL = {https://doi.org/10.1007/PL00001011},
}

@article {Taimanov92B,
    AUTHOR = {Taimanov, Iskander A.},
     TITLE = {Closed non-self-intersecting extremals of multivalued
              functionals},
   JOURNAL = {Sibirsk. Mat. Zh.},
  FJOURNAL = {Rossi\u iskaya Akademiya Nauk. Sibirskoe Otdelenie. Sibirski\u
              i\ Matematicheski\u i\ Zhurnal},
    VOLUME = {33},
      YEAR = {1992},
    NUMBER = {4},
     PAGES = {155--162, 223},
      ISSN = {0037-4474},
   MRCLASS = {49J10 (49L15 49Q20)},
  MRNUMBER = {1185445},
MRREVIEWER = {Claudia\ Simionescu-Badea},
       DOI = {10.1007/BF00971134},
       URL = {https://doi.org/10.1007/BF00971134},
}

@article {Taimanov92A,
    AUTHOR = {Taimanov, Iskander A.},
     TITLE = {Closed extremals on two-dimensional manifolds},
   JOURNAL = {Uspekhi Mat. Nauk},
  FJOURNAL = {Uspekhi Matematicheskikh Nauk},
    VOLUME = {47},
      YEAR = {1992},
    NUMBER = {2(284)},
     PAGES = {143--185, 223},
      ISSN = {0042-1316,2305-2872},
   MRCLASS = {58E10 (53C22)},
  MRNUMBER = {1185286},
MRREVIEWER = {J.\ S.\ Joel},
       DOI = {10.1070/RM1992v047n02ABEH000880},
       URL = {https://doi.org/10.1070/RM1992v047n02ABEH000880},
}

@article {Novikov82,
    AUTHOR = {Novikov, Sergei P.},
     TITLE = {The {H}amiltonian formalism and a multivalued analogue of
              {M}orse theory},
   JOURNAL = {Uspekhi Mat. Nauk},
  FJOURNAL = {Akademiya Nauk SSSR i Moskovskoe Matematicheskoe Obshchestvo.
              Uspekhi Matematicheskikh Nauk},
    VOLUME = {37},
      YEAR = {1982},
    NUMBER = {5(227)},
     PAGES = {3--49, 248},
      ISSN = {0042-1316},
   MRCLASS = {58E05 (58E30 58F05)},
  MRNUMBER = {676612},
MRREVIEWER = {P.\ Ver Eecke},
}

@article {Hofer-Viterbo88,
    AUTHOR = {Hofer, Helmut and Viterbo, Claude},
     TITLE = {The {W}einstein conjecture in cotangent bundles and related
              results},
   JOURNAL = {Ann. Scuola Norm. Sup. Pisa Cl. Sci. (4)},
  FJOURNAL = {Annali della Scuola Normale Superiore di Pisa. Classe di
              Scienze. Serie IV},
    VOLUME = {15},
      YEAR = {1988},
    NUMBER = {3},
     PAGES = {411--445},
      ISSN = {0391-173X,2036-2145},
   MRCLASS = {58F22 (58F05 70H05)},
  MRNUMBER = {1015801},
MRREVIEWER = {Vittorio\ Coti Zelati},
       URL = {http://www.numdam.org/item?id=ASNSP_1988_4_15_3_411_0},
}

@incollection {Arnold86,
    AUTHOR = {Arnol'd, Vladimir I.},
     TITLE = {First steps of symplectic topology},
 BOOKTITLE = {V{III}th international congress on mathematical physics
              ({M}arseille, 1986)},
     PAGES = {1--16},
 PUBLISHER = {World Sci. Publishing, Singapore},
      YEAR = {1987},
      ISBN = {9971-50-208-9},
   MRCLASS = {58F05 (78A10)},
  MRNUMBER = {915560},
       DOI = {10.1070/rm1986v041n06abeh004221},
       URL = {https://doi.org/10.1070/rm1986v041n06abeh004221},
}

@article {Ginzburg87,
    AUTHOR = {Ginzburg, Viktor L.},
     TITLE = {New generalizations of {P}oincar\'e's geometric theorem},
   JOURNAL = {Funktsional. Anal. i Prilozhen.},
  FJOURNAL = {Akademiya Nauk SSSR. Funktsional\cprime ny\u i\ Analiz i ego
              Prilozheniya},
    VOLUME = {21},
      YEAR = {1987},
    NUMBER = {2},
     PAGES = {16--22, 96},
      ISSN = {0374-1990},
   MRCLASS = {58F05 (58A10 78A35)},
  MRNUMBER = {902290},
}

@article{Jakub,
 author = {Jakubczyk, Bronisław and Kry{\'n}ski, Wojciech},
 title = {Conjugate points of dynamic pairs and control systems},
 fjournal = {European Series in Applied and Industrial Mathematics (ESAIM): Control, Optimization and Calculus of Variations},
 journal = {ESAIM, Control Optim. Calc. Var.},
 issn = {1292-8119},
 volume = {31},
 pages = {26},
 note = {Id/No 7},
 year = {2025},
 language = {English},
 doi = {10.1051/cocv/2024079},
 zbMATH = {8008441},
 Zbl = {1569.93051}
}

@article{Agrachev,
 author = {Agrachev, Andrei A.},
 title = {The curvature and hyperbolicity of {Hamiltonian} systems},
 fjournal = {Proceedings of the Steklov Institute of Mathematics},
 journal = {Proc. Steklov Inst. Math.},
 issn = {0081-5438},
 volume = {256},
 pages = {26--46},
 year = {2007},
 language = {English},
 doi = {10.1134/S0081543807010026},
 zbMATH = {5493494},
 Zbl = {1153.37346}
}

@Article{Abbondandolo1,
 Author = {Abbondandolo, Alberto},
 Title = {Lectures on the free period {Lagrangian} action functional},
 FJournal = {Journal of Fixed Point Theory and Applications},
 Journal = {J. Fixed Point Theory Appl.},
 ISSN = {1661-7738},
 Volume = {13},
 Number = {2},
 Pages = {397--430},
 Year = {2013},
 Language = {English},
 DOI = {10.1007/s11784-013-0128-1},
 zbMATH = {6251850},
 Zbl = {1328.37001}
}

@article{AsselleBenedetti2016,
 author = {Asselle, Luca and Benedetti, Gabriele},
 title = {The {Lusternik}-{Fet} theorem for autonomous {Tonelli} {Hamiltonian} systems on twisted cotangent bundles},
 fjournal = {Journal of Topology and Analysis},
 journal = {J. Topol. Anal.},
 issn = {1793-5253},
 volume = {8},
 number = {3},
 pages = {545--570},
 year = {2016},
 language = {English},
 doi = {10.1142/S1793525316500205},
 zbMATH = {6603943},
 Zbl = {1345.37059}
}

@article{Assenza2024,
 author = {Assenza, Valerio},
 title = {Magnetic curvature and existence of a closed magnetic geodesic on low energy levels},
 fjournal = {IMRN. International Mathematics Research Notices},
 journal = {Int. Math. Res. Not.},
 issn = {1073-7928},
 volume = {2024},
 number = {21},
 pages = {13586--13610},
 year = {2024},
 language = {English},
 doi = {10.1093/imrn/rnae209},
 zbMATH = {7985835}
}

@article{Assenza2025,
 author = {Assenza, Valerio and Marshall Reber, James and Terek, Ivo},
 title = {Magnetic flatness and {E}. {Hopf}'s theorem for magnetic systems},
 fjournal = {Communications in Mathematical Physics},
 journal = {Commun. Math. Phys.},
 issn = {0010-3616},
 volume = {406},
 number = {2},
 pages = {20},
 note = {Id/No 24},
 year = {2025},
 language = {English},
 doi = {10.1007/s00220-024-05166-5},
 zbMATH = {7966792}
}

@book{Lecturesonclosedgeodesics,
 author = {Klingenberg, Wilhelm},
 title = {Lectures on closed geodesics},
 fseries = {Grundlehren der Mathematischen Wissenschaften},
 series = {Grundlehren Math. Wiss.},
 issn = {0072-7830},
 volume = {230},
 year = {1978},
 publisher = {Springer, Cham},
 language = {English},
 zbMATH = {3617245},
 Zbl = {0397.58018}
}

@article{Merry2010,
 author = {Merry, Will J.},
 title = {Closed orbits of a charge in a weakly exact magnetic field},
 fjournal = {Pacific Journal of Mathematics},
 journal = {Pac. J. Math.},
 issn = {1945-5844},
 volume = {247},
 number = {1},
 pages = {189--212},
 year = {2010},
 language = {English},
 doi = {10.2140/pjm.2010.247.189},
 zbMATH = {5809035},
 Zbl = {1246.37082}
}

@article{Contreras2006,
 author = {Contreras, Gonzalo},
 title = {The {Palais}-{Smale} condition on contact type energy levels for convex {Lagrangian} systems},
 fjournal = {Calculus of Variations and Partial Differential Equations},
 journal = {Calc. Var. Partial Differ. Equ.},
 issn = {0944-2669},
 volume = {27},
 number = {3},
 pages = {321--395},
 year = {2006},
 language = {English},
 doi = {10.1007/s00526-005-0368-z},
 zbMATH = {5064715},
 Zbl = {1105.37037}
}

@article{AbboSchwarz,
 author = {Abbondandolo, Alberto and Schwarz, Matthias},
 title = {A smooth pseudo-gradient for the {Lagrangian} action functional},
 fjournal = {Advanced Nonlinear Studies},
 journal = {Adv. Nonlinear Stud.},
 issn = {1536-1365},
 volume = {9},
 number = {4},
 pages = {597--623},
 year = {2009},
 language = {English},
 zbMATH = {5635516},
 Zbl = {1185.37145}
}

@article{ARNOLD1,
 author = {Arnol'd, Vladimir I.},
 title = {Some remarks on flows of line elements and frames},
 fjournal = {Soviet Mathematics. Doklady},
 journal = {Sov. Math., Dokl.},
 issn = {0197-6788},
 volume = {2},
 pages = {562--564},
 year = {1961},
 language = {English},
 zbMATH = {3201217},
 Zbl = {0124.14601}
}

@article{CFP2010,
 author = {Cieliebak, Kai and Frauenfelder, Urs and Paternain, Gabriel P.},
 title = {Symplectic topology of {Ma{\~n}{\'e}}'s critical values},
 fjournal = {Geometry \& Topology},
 journal = {Geom. Topol.},
 issn = {1465-3060},
 volume = {14},
 number = {3},
 pages = {1765--1870},
 year = {2010},
 language = {English},
 doi = {10.2140/gt.2010.14.1765},
 zbMATH = {5769287},
 Zbl = {1239.53110}
}

@article{CIPP1998,
 author = {Contreras, Gonzalo and Iturriaga, Renato and Paternain, Gabriel P. and Paternain, Miguel},
 title = {Lagrangian graphs, minimizing measures and {Ma{\~n}{\'e}}'s critical values},
 fjournal = {Geometric and Functional Analysis. GAFA},
 journal = {Geom. Funct. Anal.},
 issn = {1016-443X},
 volume = {8},
 number = {5},
 pages = {788--809},
 year = {1998},
 language = {English},
 doi = {10.1007/s000390050074},
 zbMATH = {1223298},
 Zbl = {0920.58015}
}

@article{Caldiroli2024,
 author = {Caldiroli, Paolo and Capietto, Anna},
 title = {Planar closed curves with prescribed curvature},
 fjournal = {Discrete and Continuous Dynamical Systems},
 journal = {Discrete Contin. Dyn. Syst.},
 issn = {1078-0947},
 volume = {44},
 number = {10},
 pages = {3209--3221},
 year = {2024},
 language = {English},
 doi = {10.3934/dcds.2024056},
 zbMATH = {7901504},
 Zbl = {1552.53003}
}

@article{SarnataroStryker2026,
 author = {Sarnataro, Lorenzo and Stryker, Douglas},
 title = {Existence of closed embedded curves of constant curvature via min-max},
 fjournal = {Journal f{\"u}r die Reine und Angewandte Mathematik},
 journal = {J. Reine Angew. Math.},
 issn = {0075-4102},
 volume = {831},
 pages = {185--211},
 year = {2026},
 language = {English},
 doi = {10.1515/crelle-2025-0090},
 zbMATH = {8154176}
}

@article{ChengZhou1,
 author = {Cheng, Da Rong and Zhou, Xin},
 title = {Existence of curves with constant geodesic curvature in a {Riemannian} 2-sphere},
 fjournal = {Transactions of the American Mathematical Society},
 journal = {Trans. Am. Math. Soc.},
 issn = {0002-9947},
 volume = {374},
 number = {12},
 pages = {9007--9028},
 year = {2021},
 language = {English},
 doi = {10.1090/tran/8510},
 zbMATH = {7618823},
 Zbl = {1512.58008}
}

@article{Schneider1,
 author = {Schneider, Matthias},
 title = {Closed magnetic geodesics on {{\(S^2\)}}},
 fjournal = {Journal of Differential Geometry},
 journal = {J. Differ. Geom.},
 issn = {0022-040X},
 volume = {87},
 number = {2},
 pages = {343--388},
 year = {2011},
 language = {English},
 doi = {10.4310/jdg/1304514976},
 zbMATH = {5917819},
 Zbl = {1232.53006}
}

@article{ResenbergSchneider,
 author = {Rosenberg, Harold and Schneider, Matthias},
 title = {Embedded constant-curvature curves on convex surfaces},
 fjournal = {Pacific Journal of Mathematics},
 journal = {Pac. J. Math.},
 issn = {1945-5844},
 volume = {253},
 number = {1},
 pages = {213--219},
 year = {2011},
 language = {English},
 doi = {10.2140/pjm.2011.253.213},
 url = {msp.berkeley.edu/pjm/2011/253-1/p12.xhtml},
 zbMATH = {5997964},
 Zbl = {1242.53075}
}

@article{Kerman1,
 author = {Kerman, Ely},
 title = {Periodic orbits of {Hamiltonian} flows near symplectic critical submanifolds},
 fjournal = {IMRN. International Mathematics Research Notices},
 journal = {Int. Math. Res. Not.},
 issn = {1073-7928},
 volume = {1999},
 number = {17},
 pages = {953--969},
 year = {1999},
 language = {English},
 doi = {10.1155/S1073792899000501},
 zbMATH = {1385062},
 Zbl = {0958.37041}
}

@article{KermanGinzburg,
 author = {Ginzburg, Viktor L. and Kerman, Ely},
 title = {Periodic orbits of {Hamiltonian} flows near symplectic extrema.},
 fjournal = {Pacific Journal of Mathematics},
 journal = {Pac. J. Math.},
 issn = {1945-5844},
 volume = {206},
 number = {1},
 pages = {69--91},
 year = {2002},
 language = {English},
 doi = {10.2140/pjm.2002.206.69},
 zbMATH = {1868174},
 Zbl = {1055.37065}
}

@article{CGK2004,
 author = {Cieliebak, Kai and Ginzburg, Viktor L. and Kerman, Ely},
 title = {Symplectic homology and periodic orbits near symplectic submanifolds},
 fjournal = {Commentarii Mathematici Helvetici},
 journal = {Comment. Math. Helv.},
 issn = {0010-2571},
 volume = {79},
 number = {3},
 pages = {554--581},
 year = {2004},
 language = {English},
 doi = {10.1007/s00014-004-0814-0},
 zbMATH = {2113905},
 Zbl = {1073.53118}
}

@article{KirschLaurain2010,
 author = {Kirsch, Stephane and Laurain, Paul},
 title = {An obstruction to the existence of immersed curves of prescribed curvature},
 fjournal = {Potential Analysis},
 journal = {Potential Anal.},
 issn = {0926-2601},
 volume = {32},
 number = {1},
 pages = {29--39},
 year = {2010},
 language = {English},
 doi = {10.1007/s11118-009-9142-8},
 zbMATH = {5664591},
 Zbl = {1188.53004}
}

@article{EFP2024,
 author = {Enciso, Alberto and Fern{\'a}ndez, Antonio J. and Peralta-Salas, Daniel},
 title = {Small spheres with prescribed nonconstant mean curvature in {Riemannian} manifolds},
 fjournal = {Journal of Functional Analysis},
 journal = {J. Funct. Anal.},
 issn = {0022-1236},
 volume = {286},
 number = {11},
 pages = {16},
 note = {Id/No 110415},
 year = {2024},
 language = {English},
 doi = {10.1016/j.jfa.2024.110415},
 zbMATH = {7832862},
 Zbl = {1550.53031}
}

@book{Hirsch1976,
  author    = {Hirsch, Morris W.},
  title     = {Differential Topology},
  series    = {Graduate Texts in Mathematics},
  volume    = {33},
  publisher = {Springer-Verlag},
  address   = {New York},
  year      = {1976},
  doi       = {10.1007/978-1-4684-9449-5}
}

@misc{Brahim2026,
 author = {Louis-Brahim Beaufort},
 title = {Tensor tomography and frame flow ergodicity for magnetic flows in higher dimensions},
 year = {2026},
 howpublished = {Preprint, {arXiv}:2604.12495 [math.{DG}]},
 url = {https://arxiv.org/abs/2604.12495},
 arXiv = {arXiv:2604.12495}
}

@misc{Assouline1,
 author = {Rotem Assouline},
 title = {Magnetic {Brunn}-{Minkowski} inequalities},
 year = {2026},
 howpublished = {Preprint, {arXiv}:2606.08626 [math.{DG}]},
 url = {https://arxiv.org/abs/2606.08626},
 arXiv = {arXiv:2606.08626}
}

@misc{Assouline2,
 author = {Rotem Assouline},
 title = {Curvature-{Dimension} for {Autonomous} {Lagrangians}},
 year = {2026},
 howpublished = {Preprint, {arXiv}:2409.08001 [math.{DG}]},
 url = {https://arxiv.org/abs/2409.08001},
 arXiv = {arXiv:2409.08001}
}

@article{AssenzaTestolina2026,
 author = {Assenza, Valerio and Testolina, Giorgia},
 title = {Electromagnetic curvature via {Jacobi}-{Maupertuis} and beyond},
 fjournal = {Journal of Fixed Point Theory and Applications},
 journal = {J. Fixed Point Theory Appl.},
 issn = {1661-7738},
 volume = {28},
 number = {3},
 pages = {25},
 note = {Id/No 63},
 year = {2026},
 language = {English},
 doi = {10.1007/s11784-026-01314-7},
 zbMATH = {8239372}
}

@article{ADMT2026,
 author = {Assenza, Valerio and De Simoi, Jacopo and Marshall Reber, James and Terek, Ivo},
 title = {Marked length spectrum rigidity for {Anosov} magnetic surfaces},
 fjournal = {Advances in Mathematics},
 journal = {Adv. Math.},
 issn = {0001-8708},
 volume = {500},
 pages = {25},
 note = {Id/No 111091},
 year = {2026},
 language = {English},
 doi = {10.1016/j.aim.2026.111091},
 zbMATH = {8227236}
}

@article{ABB2024,
 author = {Asselle, Luca and Benedetti, Gabriele and Berti, Massimiliano},
 title = {Zoll magnetic systems on the two-torus: a {Nash}-{Moser} construction},
 fjournal = {Advances in Mathematics},
 journal = {Adv. Math.},
 issn = {0001-8708},
 volume = {452},
 pages = {39},
 note = {Id/No 109826},
 year = {2024},
 language = {English},
 doi = {10.1016/j.aim.2024.109826},
 zbMATH = {7916612},
 Zbl = {1556.37064}
}

@article{AB2022_normalform,
 author = {Asselle, Luca and Benedetti, Gabriele},
 title = {Normal forms for strong magnetic systems on surfaces: trapping regions and rigidity of {Zoll} systems},
 fjournal = {Ergodic Theory and Dynamical Systems},
 journal = {Ergodic Theory Dyn. Syst.},
 issn = {0143-3857},
 volume = {42},
 number = {6},
 pages = {1871--1897},
 year = {2022},
 language = {English},
 doi = {10.1017/etds.2021.11},
 zbMATH = {7543346},
 Zbl = {1513.70067}
}
\bibliographystyle{plain}
\end{document}